\providecommand{\U}[1]{\protect\rule{.1in}{.1in}}
\providecommand{\U}[1]{\protect \rule{.1in}{.1in}}
\newtheorem{theorem}{Theorem}[section]
\newtheorem{corollary}[theorem]{Corollary}
\newtheorem{definition}[theorem]{Definition}
\newtheorem{lemma}[theorem]{Lemma}
\newtheorem{proposition}[theorem]{Proposition}
\newtheorem{remark}[theorem]{Remark}
\newenvironment{proof}[1][Proof]{\noindent \textbf{#1.} }{\  \rule{0.5em}{0.5em}}
\DeclareMathOperator*{\esssup}{ess\,sup}
\DeclareMathOperator*{\essinf}{ess\,inf}
\begin{document}
\title{Supermartingale Decomposition Theorem under $G$-expectation}
\author{ Hanwu Li\thanks{School of Mathematics, Shandong University,
lihanwu11@163.com.}
\and Shige Peng\thanks{School of Mathematics and Qilu Institute of Finance, Shandong University,
peng@sdu.edu.cn. Li and Peng's research was
partially supported by NSF (No. 10921101) and by the 111
Project (No. B12023).}
\and Yongsheng Song\thanks{Academy of Mathematics and Systems Science, CAS, Beijing, China, yssong@amss.ac.cn.
 Research supported  by NCMIS;
Key Project of NSF (No. 11231005); Key Lab of Random Complex
Structures and Data Science, CAS (No. 2008DP173182).}}
\maketitle
\begin{abstract}
The objective of this paper is to establish the decomposition theorem for supermartingales under the  $G$-framework. We first introduce a $g$-nonlinear expectation via a kind of $G$-BSDE and the associated supermartingales. We have shown that this kind of supermartingales have the decomposition similar to the classical case. The main ideas are to apply the uniformly continuous property of $S_G^\beta(0,T)$, the representation of the solution to $G$-BSDE and the approximation method via penalization.
\end{abstract}

\textbf{Key words}: $G$-expectation, $\mathbb{\hat{E}}^{g}$-supermartingale,  $\mathbb{\hat{E}}^{g}$-supermartingale decomposition theorem

\textbf{MSC-classification}: 60H10, 60H30
\section{Introduction}
The classical Doob-Meyer decomposition theorem  tells us that a large class of submartingales can be uniquely represented as the summation of a martingale and a predictable  increasing process.
This is one of fundamental results in the theory of stochastic analysis. This theorem was firstly proved in \cite{D} for the discrete time case. Then \cite{M62,M63} proved this result for the continuous time case. This theorem is important for the optimal stopping problem used to solve the pricing for the American options (see \cite{Ben},\cite{Karatzas}). It can be applied to study the problem of hedging contingent claims by portfolios constraint to take values in a given closed, convex set (see \cite{CK}). A general case of Doob-Meyer decomposition theorem was introduced in \cite{P99} when the supermartingale $Y_\cdot$ is defined by a nonlinear operator. It was proved that the nonlinear version of Doob-Meyer decomposition theorem also holds.

The objective of this paper is to solve the problem of decomposition
theorem of Doob-Meyer's type for a nonlinear supermartingale defined in a sublinear
expectation space-- $G$-expectation (upper case $G$). In order to understand
the motivation of this objective, let us recall its special linear case,
namely, in a framework of Wiener probability space $(\Omega,{\mathcal{F}},({\mathcal{F}%
})_{t\geq0},P)$ in which the canonical process $B_{t}(\omega)=\omega(t)$ for
$\omega \in \Omega=C_{0}([0,\infty))$ is a $d$-dimensional standard Brownian
motion. Given a function $g=g(s,\omega,y,z):[0,\infty)\times \Omega
\times \mathbb{R}\times \mathbb{R}^d\rightarrow \mathbb{R}$ where
$g(\cdot,y,z)$ satisfies the \textquotedblleft usual Lipschitz
conditions\textquotedblright \ in the framework of BSDE (see \cite{PP90}), such
 that, for each $T\in \lbrack0,\infty)$, the following BSDE has a
unique solution on $[0,T]$,
\[
y_{t}=\xi+\int_{t}^{T}g(s,y_{s},z_{s})ds+(A_{T}-A_{t})-\int_{t}^{T}z_{s}%
dB_{s},\, \, \,s\in \lbrack0,T],
\]
where $\xi$ is a given random variable in $L^{2}(\Omega,{\mathcal{F}}_{T},P)$
and $A_{\cdot}$ is a given continuous and increasing process with $A_{0}=0$
and $A_{t}\in L^{2}(\Omega,{\mathcal{F}}_{t},P)$ for each $t\in(0,T]$.  We call $y_{\cdot}$ a
$g$-supersolution. If $A_{\cdot}\equiv0$ then $y_{\cdot}$ is called a
$g$-solution. For the later case, since for each given $t\leq T$, the
${\mathcal{F}}_{t}$ measurable random variable $y_{t}$ is uniquely determined
by the terminal condition $y_{T}=\xi \in L^{2}(\Omega,{\mathcal{F}}_{T},P)$, we
then can define a backward semigroup \cite{P97,P03}
\[
{\mathcal{E}}_{t,T}^{g}[\xi]:=y_{t}
,\, \, \, \,0\leq t\leq T<\infty \tag{a}.
\]
This semiproup gives us a generalized notion of
nonlinear expectation with corresponding ${\mathcal{F}}_{t}$-conditional
expectation, called $g$-expectation \cite{P97}. By applying the comparison
theorem of BSDE we know that any $g$-supersolution $Y_{\cdot}$ is also a
$g$-supermartingale 
 (i.e., we have ${\mathcal{E}%
}_{s,t}^{g}[Y_{t}]\leq Y_{s}$, for each $s\leq t$). But the proof of the inverse claim,
namely, a $g$-supermartingale is a $g$-supersolution, is not at all trivial
(we refer to \cite{P99} for detailed proof). In fact this is a generalization
of the classical Doob-Meyer decomposition to the case of nonlinear
expectations, and the linear situation corresponds to the case $g\equiv0$.

Moreover, this nonlinear Doob-Meyer decomposition theorem plays a key role to
obtain the following  representation theorem of
nonlinear expectations: for a given arbitrary ${\mathcal{F}}_{t}$-conditional
nonlinear expectation $({\mathcal{E}}_{s,t}[\xi])_{0\leq s\leq t<\infty}$
with certain regularity, there exists a unique function $g=g(\cdot,y,z)$
satisfying the usual condition of BSDE, such that,
\[
{\mathcal{E}}_{t,T}[\xi]={\mathcal{E}}_{t,T}^{g}[\xi],\, \, \, \, \text{ for all
}\, \,0\leq t\leq T<\infty,\, \, \, \text{ and }\, \, \xi \in L^{2}(\Omega
,{\mathcal{F}}_{T},P).
\]
We refer to \cite{CHMP}, \cite{P03}, \cite{P05} for the proof of this very deep
result, also to \cite{DPR} a wide class of time consistent risk
measures are identified to be $g$-expectations.

It is known that volatility model uncertainty (VMU) involves essentially
non-dominated family of probability measures $\mathcal{P}$ on $(\Omega
,\mathcal{F})$. This is a main reason why many risk measures, and pricing operators cannot
be well-defined within a framework of probability space such as Wiener space $(\Omega
,{\mathcal{F}}_{T},P)$. \cite{P07} introduced the framework of (fully
nonlinear) time consistent $G$-expectation space $(\Omega,L_{G}^{1}%
(\Omega),\hat{\mathbb{E}})$ such that all probability measures in $\mathcal{P}$
are dominated by this sublinear expectation and such that the canonical
process $B_{\cdot}(\omega)=\omega(\cdot)$ becomes a nonlinear Brownian motion,
called $G$-Brownian. Many random variables, negligible under the probability
measure $P\in \mathcal{P}$, as well as under other measures in $\mathcal{P}$,
can be clearly distinguished in this new framework. The corresponding theory
of stochastic integration and stochastic calculus of It\^{o}'s type have been
established in \cite{P07,P10}. In particular, the existence and uniqueness of
BSDE driven by $G$-Brownian motion ($G$-BSDE) have been established in \cite{HJPS}. Roughly
speaking (see next section for details), a $G$-BSDE is as follows
\[
y_{t}=\xi+\int_{t}^{T}g(s,y_{s},z_{s})ds-\int_{t}^{T}z_{s}dB_{s}-(K_{T}%
-K_{t}),\quad t\in \lbrack0,T],
\]
where $g(\cdot,y,z)$ and $\xi$ satisfy very similar conditions with the classical case. The solution
of this $G$-BSDE consists of a triplet of adapted processes $(y_{\cdot},z_{\cdot
},K_{\cdot})$ where $K_{\cdot}$ is a decreasing $G$-martingale with $K_{0}=0$.
We then call $y_{\cdot}$ a ${g}$-solution under $\hat{\mathbb{E}}$. From the
existence and uniqueness of the $G$-BSDE, we can also define $\hat{\mathbb{E}}_{t,T}^{g}[\xi]=y_{t}$ which forms a time consistent nonlinear expectation.
If $K_{\cdot}$ is just a decreasing process then we call $y_\cdot$ a ${g}$-supersolution under $\hat{\mathbb{E}}$.

By the comparison theorem of $G$-BSDE obtained in \cite{HJPS1}, we can prove that a ${g}
$-supersolution under $\mathbb{\hat{E}}$ $Y_\cdot$ is also an  $\mathbb{\hat{E}}^{g}$-supermartingale, i.e., we have $\hat{\mathbb{E}}^g_{s,t}[Y_t]\leq Y_s$, for each $s\leq t$. The  objective of this paper is to
prove its inverse property: a continuous  $\mathbb{\hat{E}}^{g}$-supermartingale $Y_{\cdot}$ is also a ${g}$-supersolution under $\mathbb{E}$
. Namely, $Y_\cdot$ can be written as
\[
Y_{t}=Y_{T}+\int_{t}^{T}g(s,Y_{s},Z_{s})ds-\int_{t}^{T}Z_{s}dB_{s}%
+(A_{T}-A_{t}),\quad t\in \lbrack0,T],
\]
where $A$ is a continuous increasing process.  A special case of this result
is when $g\equiv0$. In this case $Y_{\cdot}$ is a $G$-supermartingale 
and it can be decomposed into the following
\[
Y_{t}=Y_{0}+\int_{0}^{t}Z_{s}dB_{s}-A_{t},%
\]
where $A$ is an increasing process. This is still a new and
non-trivial result.

The proof of this decomposition theorem involves a penalization procedure,
\[
y_{s}^{n}=Y_{T}+\int_{t}^{T}g(s,y_{s}^{n},z_{s}^{n})ds-\int_{t}^{T}z^n_{s}%
dB_{s}-(K_{T}^{n}-K_{t}^{n})+(L_{T}^{n}-L_{t}^{n}),\quad t\in \lbrack0,T],
\]
for $n=1,2,\cdots$, where $L_{t}^{n}=n\int_{0}^{t}(Y_{s}-y_{s}^{n})ds$ and
$K_{\cdot}^{n}$ is an decreasing martingale. 
In order to prove that $y^{n}\uparrow
Y$, it is necessary to show that $y^{n}\leq Y$. A main problem is that
the corresponding Doob's optional sampling is still an open problem. We
overcome this difficulty by proving that, for each probability dominated by
$\mathcal{P}$,  we have $y^{n}\leq Y$. We also need to introduce some new
methods, see Lemma \ref{the3.7} and Lemma \ref{the3.8}, to prove the uniform convergence of $y^{n}$. Generally
speaking, the well-known
Fatou's lemma cannot be directly and automatically used in this sublinear expectation framework. Besides,  a bounded  subset  in  $M_G^\beta(0,T)$  does not imply weakly compactness. Many proofs become
more delicate and challenging.

We believe that the proof of our new decomposition theorem of Doob-Meyer's
type under $G$-framework will play a key role for understanding and solving
many important problem. It is a key step towards the understanding and
solving a general representation theorem of dynamically consistent nonlinear
expectations, as well as dynamic risk measures and pricing operators.

The paper is organized as follows. In Section 2, we set up
some notations and results as preliminaries for the later proofs. Section 3 is
devoted to the study of the so-called  $\mathbb{\hat{E}}^{g}$-supermartingales. The
representation theorem is established with detailed proofs. In Section 4, we present the
relationship between the $\hat{\mathbb{E}}^g$-supermartingales and the fully nonlinear parabolic PDEs.

\section{Preliminaries}
\subsection{$G$-expectation and $G$-It\^{o}'s calculus}
The main purpose of this section is to recall some basic notions and results of $G$-expectation, which are needed in the sequel. The readers may refer to \cite{HJPS}, \cite{HJPS1}, \cite{P08a}, \cite{P10} for more details.

\begin{definition}
\label{def2.1} Let $\Omega$ be a given set and let $\mathcal{H}$ be a vector
lattice of real valued functions defined on $\Omega$, namely $c\in \mathcal{H%
}$ for each constant $c$ and $|X|\in \mathcal{H}$ if $X\in \mathcal{H}$. $%
\mathcal{H}$ is considered as the space of random variables. A sublinear
expectation $\hat{\mathbb{E}}$ on $\mathcal{H}$ is a functional $
\hat {\mathbb{E}}:\mathcal{H}\rightarrow \mathbb{R}$ satisfying the following
properties: for all $X,Y\in \mathcal{H}$, we have

\begin{description}
\item[(a)] Monotonicity: If $X\geq Y$, then $\hat{\mathbb{E}}[X]\geq
\hat{\mathbb{E}}[Y]$;

\item[(b)] Constant preserving: $\hat{\mathbb{E}}[c]=c$;

\item[(c)] Sub-additivity: $\hat{\mathbb{E}}[X+Y]\leq \hat{\mathbb{E}}[X]+%
\hat{\mathbb{E}}[Y]$;

\item[(d)] Positive homogeneity: $\hat{\mathbb{E}}[\lambda X]=\lambda
\hat{\mathbb{E}}[X]$ for each $\lambda \geq0$.
\end{description}

The triple $(\Omega,\mathcal{H},\hat{\mathbb{E}})$ is called a
sublinear expectation space.  $X \in\mathcal{ H}$ is called a random
variable in $(\Omega,\mathcal{H},\hat{\mathbb{E}})$. We often call
$Y = (Y_1, \ldots, Y_d), Y_i \in\mathcal{ H}$ a $d$-dimensional
random vector in $(\Omega,\mathcal{H},\hat{\mathbb{E}})$.
\end{definition}

\begin{definition}
\label{def2.2} Let $X_{1}$ and $X_{2}$ be two $n$-dimensional random vectors
defined respectively in sublinear expectation spaces $(\Omega_{1}%
,\mathcal{H}_{1},\mathbb{\hat{E}}_{1})$ and $(\Omega_{2},\mathcal{H}%
_{2},\mathbb{\hat{E}}_{2})$. They are called identically distributed, denoted
by $X_{1}\overset{d}{=}X_{2}$, if $\mathbb{\hat{E}}_{1}[\varphi(X_{1}%
)]=\mathbb{\hat{E}}_{2}[\varphi(X_{2})]$, for all$\ \varphi\in C_{Lip}%
(\mathbb{R}^{n})$, where $C_{Lip}(\mathbb{R}^{n})$ is the space of real
continuous functions defined on $\mathbb{R}^{n}$ such that
\[
|\varphi(x)-\varphi(y)|\leq C|x-y|\ \text{\ for
all}\ x,y\in\mathbb{R}^{n},
\]
where $C$ depends only on $\varphi$.
\end{definition}

\begin{definition}
\label{def2.3} In a sublinear expectation space $(\Omega,\mathcal{H}%
,\mathbb{\hat{E}})$, a random vector $Y=(Y_{1},\cdot\cdot\cdot,Y_{n})$,
$Y_{i}\in\mathcal{H}$, is said to be independent of another random vector
$X=(X_{1},\cdot\cdot\cdot,X_{m})$, $X_{i}\in\mathcal{H}$ under $\mathbb{\hat
{E}}[\cdot]$, denoted by $Y\perp X$, if for every test function $\varphi\in
C_{Lip}(\mathbb{R}^{m}\times\mathbb{R}^{n})$ we have $\mathbb{\hat{E}%
}[\varphi(X,Y)]=\mathbb{\hat{E}}[\mathbb{\hat{E}}[\varphi(x,Y)]_{x=X}]$.
\end{definition}

\begin{definition}
\label{def2.4} ($G$-normal distribution) A $d$-dimensional random vector
$X=(X_{1},\cdot\cdot\cdot,X_{d})$ in a sublinear expectation space
$(\Omega,\mathcal{H},\mathbb{\hat{E}})$ is called $G$-normally distributed if
for each $a,b\geq0$ we have
\[
aX+b\bar{X}\overset{d}{=}\sqrt{a^{2}+b^{2}}X,
\]
where $\bar{X}$ is an independent copy of $X$, i.e., $\bar{X}\overset{d}{=}X$
and $\bar{X}\bot X$. Here the letter $G$ denotes the function
\[
G(A):=\frac{1}{2}\mathbb{\hat{E}}[\langle AX,X\rangle]:\mathbb{S}%
_{d}\rightarrow\mathbb{R},
\]
where $\mathbb{S}_{d}$ denotes the collection of $d\times d$ symmetric matrices.
\end{definition}

It is proved in \cite{P08a} that $X=(X_{1},\cdot\cdot\cdot,X_{d})$ is $G$-normally
distributed if and only if for each $\varphi\in C_{Lip}(\mathbb{R}^{d})$,
$u(t,x):=\mathbb{\hat{E}}[\varphi(x+\sqrt{t}X)]$, $(t,x)\in\lbrack
0,\infty)\times\mathbb{R}^{d}$, is the solution of the following fully nonlinear parabolic equation:%
\[
\partial_{t}u-G(D_{x}^{2}u)=0,\ u(0,x)=\varphi(x).
\]
where  $D^2_xu=\{\partial_{x_ix_j}^2u\}_{i,j=1}^d$.

In the case $d=1$, the function $G:\mathbb{R}\rightarrow\mathbb{R}$ is a given  monotonic and sublinear function of the form
\begin{equation}\label{G}
G(a)=\frac{1}{2}(\bar{\sigma}^2a^+-{\underline{\sigma}}^2a^-), \,\,\, a\in {\mathbb{R}},
\end{equation}
where $\bar{\sigma}^2=\hat{\mathbb{E}}[X^2]$ and ${\underline{\sigma}}^2=-\hat{\mathbb{E}}[-X^2]$. In this paper we only consider the non-degenerate $G$-normal distribution, i.e., $\underline{\sigma}>0$ in the 1-dimensional case.

%

We present the notion of $G$-Brownian motion in a sublinear expectation space. For notational simplification, we only consider the case of $1$-dimensional $G$-Brownian motion. But the
methods  of this paper can be directly applied to  $d$-dimensional situations.

Let $\Omega=C_{0}([0,\infty);\mathbb{R})$ be the space of
real valued continuous functions on $[0,\infty)$ with $\omega_{0}=0$ endowed
with the following distance
$$
\rho(\omega^1, \omega^2):=\sum^\infty_{N=1} 2^{-N} [(\max_
{t\in[0,N]} | \omega^1_t-\omega^2_t|)  \wedge 1],
$$
and $B_t(\omega)=\omega_t,\, t\geq 0$,  $\omega\in \Omega$ be the canonical
process. For each $T>0$, set $\Omega_T=\{\omega(\cdot\wedge T),\,\, \omega\in \Omega\}$.
We denote by $\mathcal{B}(\Omega)$ the collection of all Borel-measurable subsets of $\Omega$.

\begin{definition}
\label{def2.5} i)
Set
\begin{align*}
L_{ip}(\Omega_T):=&\{ \varphi(B_{t_{1}},...,B_{t_{n}}):n\geq1,t_{1}%
,...,t_{n}\in\lbrack0,T],\varphi\in C_{Lip}(\mathbb{R}^{n})\},\\
L_{ip}(\Omega):=&\bigcup_{T>0}L_{ip}(\Omega_T).
\end{align*}
Let $G:\mathbb{R}\rightarrow\mathbb{R}$ be a given monotonic and sublinear
function of the form (\ref{G}). $G$-expectation is a sublinear expectation defined on the space of the random variable
$(\Omega,L_{ip}(\Omega))$ in the following way: for each $X\in L_{ip}(\Omega)$ in  the form
 $X=\varphi(B_{t_{1}}-B_{t_{0}},B_{t_{2}}-B_{t_{1}},\cdots,B_{t_{m}}-B_{t_{m-1}})\in L_{ip}(\Omega)$, with  $t_0<t_1<\cdots<t_m$, we set
\[
\mathbb{\hat{E}}[X]=\mathbb{\tilde{E}}[\varphi(\sqrt{t_{1}-t_{0}}\xi_{1}%
,\cdot\cdot\cdot,\sqrt{t_{m}-t_{m-1}}\xi_{m})],
\]
  where $\xi_{1},\cdot\cdot\cdot,\xi_{n}$ are
identically distributed $1$-dimensional $G$-normally distributed random
vectors in a sublinear expectation space $(\tilde{\Omega},\tilde{\mathcal{H}%
},\mathbb{\tilde{E}})$ such that $\xi_{i+1}$ is independent of $(\xi_{1}%
,\cdot\cdot\cdot,\xi_{i})$ for every $i=1,\cdot\cdot\cdot,m-1$.

The canonical process $B_{t}(\omega)=\omega_t$, $t\geq 0$,  is called a
$G$-Brownian motion on the sublinear expectation space $(\Omega, L_{ip}(\Omega),\hat{\mathbb{E}}[\cdot])$

ii) Let us define the conditional $G$-expectation $\mathbb{\hat{E}}_{t}$ of
$\xi\in L_{ip}(\Omega_T)$ knowing $L_{ip}(\Omega_t)$, for $t\in
\lbrack0,T]$. Without loss of generality we can assume that $\xi$ has the
representation $\xi=\varphi(B_{t_{1}}-B_{t_{0}},B_{t_{2}}-B_{t_{1}},\cdot
\cdot\cdot,B_{t_{m}}-B_{t_{m-1}})$ with $t=t_{i}$, for some $1\leq i\leq m$,
and we put
\[
\mathbb{\hat{E}}_{t_{i}}[\varphi(B_{t_{1}}-B_{t_{0}},B_{t_{2}}-B_{t_{1}}%
,\cdot\cdot\cdot,B_{t_{m}}-B_{t_{m-1}})]
\]%
\[
=\tilde{\varphi}(B_{t_{1}}-B_{t_{0}},B_{t_{2}}-B_{t_{1}},\cdot\cdot
\cdot,B_{t_{i}}-B_{t_{i-1}}),
\]
where
\[
\tilde{\varphi}(x_{1},\cdot\cdot\cdot,x_{i})=\mathbb{\hat{E}}[\varphi
(x_{1},\cdot\cdot\cdot,x_{i},B_{t_{i+1}}-B_{t_{i}},\cdot\cdot\cdot,B_{t_{m}%
}-B_{t_{m-1}})].
\]

\end{definition}

Define $\Vert X\Vert_{L_G^p}=(\mathbb{\hat{E}}[|\xi|^{p}])^{1/p}$ for $X
\in L_{ip}(\Omega)$ and $p\geq1$. Then \textmd{for all}$\ t\in\lbrack
0,T]$, $\mathbb{\hat{E}}_{t}[\cdot]$ is a continuous mapping on $L_{ip}(\Omega_T)$
w.r.t. the norm $\Vert\cdot\Vert_{L_G^p}$. Therefore it can be
extended continuously to the completion $L_{G}^{p}(\Omega_{T})$ of
$L_{ip}(\Omega_T)$ under the norm $\Vert\cdot\Vert_{L_G^p}$. Denis et al. \cite{DHP11}
proved that the completions of $C_{b}(\Omega_{T})$ (the set of bounded
continuous function on $\Omega_{T}$)  under the norm $\Vert\cdot\Vert_{L_G^p}$ coincides  with
$L_{G}^{p}(\Omega_{T})$.


 Let $\pi_{t}^{N}=\{t_{0}^{N},\cdots,t_{N}^{N}%
\}$, $N=1,2,\cdots$, be a sequence of partitions of $[0,t]$ such that $\mu
(\pi_{t}^{N})=\max\{|t_{i+1}^{N}-t_{i}^{N}|:i=0,\cdots,N-1\} \rightarrow0$,
the quadratic variation process of $B$ is defined by%
\[
\langle B\rangle_{t}=\lim_{\mu(\pi_{t}^{N})\rightarrow0}%
\sum_{j=0}^{N-1}(B_{t_{j+1}^{N}}-B_{t_{j}^{N}}%
)^{2}.
\]

Let us denote the set of all probability
measures on $(\Omega_{T},\mathcal{B}(\Omega_{T}))$ by $\mathcal{M}_{1}(\Omega_{T})$.

\begin{theorem}
\label{the2.7} (\cite{DHP11,HP09}) There exists a tight set $\mathcal{P}\subset\mathcal{M}_{1}(\Omega_{T})$
 such that
\[
\mathbb{\hat{E}}[X]=\sup_{P\in\mathcal{P}}E_{P}[X]\ \ \text{for
\ all}\ X\in L_{ip}(\Omega_T).
\]
$\mathcal{P}$ is called a set that represents $\mathbb{\hat{E}}$.
\end{theorem}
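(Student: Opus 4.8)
The plan is to recover $\hat{\mathbb{E}}$ as a supremum of genuinely countably additive probability measures in three stages: an abstract Hahn--Banach representation by linear functionals, an upgrade of each such functional to a Radon measure, and a tightness argument. First I would exploit only the axioms (a)--(d): $\hat{\mathbb{E}}$ is a sublinear functional on the vector lattice $L_{ip}(\Omega_T)$, so for each fixed $X_0$ the Hahn--Banach theorem produces a linear $E \le \hat{\mathbb{E}}$ with $E[X_0] = \hat{\mathbb{E}}[X_0]$. Monotonicity and constant preservation of $\hat{\mathbb{E}}$ force $E$ to be monotone with $E[1] = 1$, i.e. a finitely additive linear expectation. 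Collecting all dominated linear expectations into a set $\Theta$ yields
\[
\hat{\mathbb{E}}[X] = \sup_{E \in \Theta} E[X], \qquad X \in L_{ip}(\Omega_T),
\]
with the supremum attained for every $X$.

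The second and decisive stage is to realise each $E \in \Theta$ as integration against a countably additive measure. Since $|E[X]| \le \hat{\mathbb{E}}[|X|] = \|X\|_{L_G^1}$, each $E$ extends continuously to the completion, which by \cite{DHP11} contains $C_b(\Omega_T)$. I would then verify the downward-continuity (regularity) property
\[
X_n \in C_b(\Omega_T),\ X_n \downarrow 0 \ \Rightarrow\ \hat{\mathbb{E}}[X_n] \downarrow 0,
\]
from which $0 \le E[X_n] \le \hat{\mathbb{E}}[X_n] \downarrow 0$ shows every $E \in \Theta$ is continuous from above on the Stone lattice $C_b(\Omega_T)$. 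By the Daniell--Stone theorem, and since $\Omega_T$ is Polish, each such $E$ is represented by a unique Radon probability measure $P \in \mathcal{M}_1(\Omega_T)$; writing $\mathcal{P}$ for the resulting family turns the previous display into $\hat{\mathbb{E}}[X] = \sup_{P \in \mathcal{P}} E_P[X]$.

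To obtain tightness I would use the special moment structure of $G$-Brownian motion. Because each increment $B_t - B_s$ is $\sqrt{t-s}$ times a $G$-normal variable, the axioms give $\hat{\mathbb{E}}[|B_t - B_s|^{2k}] \le C_k |t-s|^k$ for every $k \ge 1$, and since each $P \in \mathcal{P}$ is dominated by $\hat{\mathbb{E}}$ we get the uniform estimate $\sup_{P \in \mathcal{P}} E_P[|B_t - B_s|^{2k}] \le C_k|t-s|^k$. Choosing $k \ge 2$ and applying the Kolmogorov--Chentsov criterion gives a modulus-of-continuity bound that is uniform in $P$; combined with $B_0 = 0$, Prokhorov's theorem yields relative compactness, hence tightness, of $\mathcal{P}$ in $\mathcal{M}_1(\Omega_T)$.

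The hard part is the regularity property of the second stage, $\hat{\mathbb{E}}[X_n] \downarrow 0$: this is precisely the passage from finitely additive means to countably additive measures and cannot be read off from sub-additivity alone. Rather than verify it abstractly, the cleaner route is to construct the representing family by hand and read off both regularity and tightness from it: on an auxiliary Wiener space carrying a Brownian motion $W$, let $P_\theta$ be the law of $\int_0^\cdot \theta_s\, dW_s$ as $\theta$ ranges over adapted processes valued in $[\underline{\sigma}, \bar{\sigma}]$, and let $\mathcal{P}$ be the closure of $\{P_\theta\}$. A dynamic-programming argument identifies $\sup_\theta E_{P_\theta}[\varphi(B_{t_1}, \ldots, B_{t_n})]$ with the solution of $\partial_t u - G(D^2_x u) = 0$, which by uniqueness of viscosity solutions coincides with $\hat{\mathbb{E}}[\varphi(B_{t_1}, \ldots, B_{t_n})]$. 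This exhibits a genuinely countably additive, tight family satisfying the required representation and so completes the proof, as carried out in \cite{DHP11, HP09}.
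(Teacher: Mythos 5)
Your final, operative argument --- constructing $\mathcal{P}$ as (the weak closure of) the laws of $\int_0^\cdot \theta_s\,dW_s$ for adapted $\theta$ valued in $[\underline{\sigma},\bar{\sigma}]$, identifying $\sup_\theta E_{P_\theta}[\varphi(B_{t_1},\ldots,B_{t_n})]$ with $\hat{\mathbb{E}}[\varphi(B_{t_1},\ldots,B_{t_n})]$ via the HJB/$G$-heat equation and uniqueness of viscosity solutions, and extracting tightness from uniform moment bounds plus Kolmogorov's criterion --- is correct and is essentially the proof of \cite{DHP11,HP09} that the paper invokes (the paper gives no proof of its own, and records exactly this representing family $\mathcal{P}_M$ in Remark \ref{rem1.1}). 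You were also right to abandon the Hahn--Banach/Daniell--Stone route: the downward continuity $X_n\downarrow 0 \Rightarrow \hat{\mathbb{E}}[X_n]\downarrow 0$ on $C_b(\Omega_T)$ is not a consequence of the sublinearity axioms alone and is in practice deduced \emph{from} the tight representation, so insisting on that route would be circular.
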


Let $\mathcal{P}$ be a tight set that represents $\mathbb{\hat{E}}$.
For this $\mathcal{P}$, we define capacity%
\[
c(A):=\sup_{P\in\mathcal{P}}P(A),\ A\in\mathcal{B}(\Omega_{T}).
\]
The set $A\subset\Omega_{T}$ is said to be polar if $c(A)=0$. A property holds
\textquotedblleft quasi-surely\textquotedblright\ (q.s. for short) if it holds
outside a polar set. In the following, we do not distinguish two random
variables $X$ and $Y$ if $X=Y$ q.s.. 
\begin{remark}\label{rem1.1}
Let $(\Omega,\mathcal{F},P^0)$ be a probability space and $(W_t)_{t\geq 0}$ be a 1-dimensional Brownian motion under $P^0$. Let $\mathbb{F}=\{\mathcal{F}_t\}$ be the augmented filtration generated by $W$. \cite{DHP11} proved that
\begin{displaymath}
\mathcal{P}_M:=\{P_h|P_h=P^0\circ X^{-1}, X_t=\int_0^t h_sdW_s, h\in L_{\mathbb{F}}^2([0,T];[\underline{\sigma},\overline{\sigma}])\},
\end{displaymath}
is a set that represents $\hat{\mathbb{E}}$, 
where $L_{\mathbb{F}}^2([0,T];[\underline{\sigma},\overline{\sigma}])$
is the collection of $\mathbb{F}$-adapted measurable processes with values in $[\underline{\sigma},\overline{\sigma}]$.
\end{remark}

For $\xi\in L_{ip}(\Omega_T)$, let $\mathcal{E}(\xi)=\hat{\mathbb{E}}[\sup_{t\in[0,T]}\hat{\mathbb{E}}_t[\xi]]$, where $\hat{\mathbb{E}}$ is the $G$-expectation. For convenience, we call $\mathcal{E}$ the $G$-evaluation. For $p\geq 1$ and $\xi\in L_{ip}(\Omega_T)$, define $\|\xi\|_{p,\mathcal{E}}=[\mathcal{E}(|\xi|^p)]^{1/p}$. Let $L_{\mathcal{E}}^p(\Omega_T)$ denote the completion of $L_{ip}(\Omega_T)$ under $\|\cdot\|_{p,\mathcal{E}}$. We shall give an estimate between the two norms $\|\cdot\|_{L_G^p}$ and $\|\cdot\|_{p,\mathcal{E}}$.


\begin{theorem}[\cite{S11}]\label{the2.4}
For any $\alpha\geq 1$ and $\delta>0$, $L_G^{\alpha+\delta}(\Omega_T)\subset L_{\mathcal{E}}^{\alpha}(\Omega_T)$. More precisely, for any $1<\gamma<\beta:=(\alpha+\delta)/\alpha$, $\gamma\leq 2$, we have
\begin{displaymath}
\|\xi\|_{\alpha,\mathcal{E}}^{\alpha}\leq \gamma^*\{\|\xi\|_{L_G^{\alpha+\delta}}^{\alpha}+14^{1/\gamma}
C_{\beta/\gamma}\|\xi\|_{L_G^{\alpha+\delta}}^{(\alpha+\delta)/\gamma}\},\forall \xi\in L_{ip}(\Omega_T),
\end{displaymath}
where $C_{\beta/\gamma}=\sum_{i=1}^\infty i^{-\beta/\gamma}$, $\gamma^*=\gamma/(\gamma-1)$.
\end{theorem}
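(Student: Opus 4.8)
The plan is to reduce everything to a maximal inequality for a nonnegative $G$-martingale and then to integrate the resulting distributional bound. First I would reduce to the case $\xi\ge 0$ by replacing $\xi$ with $|\xi|$ (which changes neither norm), fix an auxiliary exponent $\gamma$ with $1<\gamma<\beta$ and $\gamma\le 2$, and note that since $x\mapsto x^{\gamma}$ is convex and increasing on $[0,\infty)$, conditional Jensen for the $G$-expectation gives $(\hat{\mathbb{E}}_t[|\xi|^{\alpha}])^{\gamma}\le \hat{\mathbb{E}}_t[|\xi|^{\alpha\gamma}]$ for every $t$. Writing $R_t:=\hat{\mathbb{E}}_t[|\xi|^{\alpha\gamma}]$ and $R^{*}:=\sup_{t\in[0,T]}R_t$, this yields the pointwise domination
\[
\sup_{t\in[0,T]}\hat{\mathbb{E}}_t[|\xi|^{\alpha}]\le (R^{*})^{1/\gamma},\qquad\text{so}\qquad \|\xi\|_{\alpha,\mathcal{E}}^{\alpha}=\hat{\mathbb{E}}\big[\sup_{t}\hat{\mathbb{E}}_t[|\xi|^{\alpha}]\big]\le \hat{\mathbb{E}}\big[(R^{*})^{1/\gamma}\big].
\]

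The heart of the proof is a weak-type maximal estimate for $R^{*}$. Here I would use that $R_t=\hat{\mathbb{E}}_t[|\xi|^{\alpha\gamma}]$ is a $G$-martingale: combining the tower property $\hat{\mathbb{E}}_s[R_t]=R_s$ with the domination $\hat{\mathbb{E}}_s[\,\cdot\,]\ge E_{P}[\,\cdot\,|\mathcal{F}_s]$ valid for every $P\in\mathcal{P}$, one sees that $R$ is a nonnegative $P$-supermartingale for each such $P$, with $R_0=\hat{\mathbb{E}}[|\xi|^{\alpha\gamma}]$. Applying the classical supermartingale maximal inequality under each $P$ and then taking the supremum over $\mathcal{P}$ should give a bound of the form $c(R^{*}>\mu)\le \mu^{-1}\hat{\mathbb{E}}[|\xi|^{\alpha\gamma}]$, up to the error terms incurred when passing from the continuous-time supremum to a countable skeleton of times. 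I expect the quantitative constants $14^{1/\gamma}$ and $C_{\beta/\gamma}=\sum_{i=1}^{\infty} i^{-\beta/\gamma}$ to be generated precisely at this step: since one cannot invoke weak compactness or Fatou's lemma uniformly over the non-dominated family $\mathcal{P}$, the supremum over $t$ must be discretized and the contributions of the integer-indexed level sets summed, the series $\sum_i i^{-\beta/\gamma}$ converging because $\gamma<\beta$.

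With the maximal estimate in hand I would conclude by the layer-cake bound $\hat{\mathbb{E}}[(R^{*})^{1/\gamma}]\le\int_0^{\infty}c(R^{*}>\mu^{\gamma})\,d\mu$ (valid since $\hat{\mathbb{E}}=\sup_{P\in\mathcal{P}}E_P$ dominates each $E_P$ and the classical layer-cake formula applies under each $P$), splitting the integral at a free level $L$, bounding the head by $L$ and the tail by the weak-type inequality, and optimizing in $L$; the Hölder-conjugate factor $\gamma^{*}=\gamma/(\gamma-1)$ emerges from this optimization. Finally I would convert the $\hat{\mathbb{E}}[|\xi|^{\alpha\gamma}]$-terms into the $L_G^{\alpha+\delta}$-norm by the power-mean inequality $\hat{\mathbb{E}}[|\xi|^{\alpha\gamma}]\le\hat{\mathbb{E}}[|\xi|^{\alpha+\delta}]^{\gamma/\beta}=\|\xi\|_{L_G^{\alpha+\delta}}^{\alpha\gamma}$ (legitimate because $\alpha\gamma\le\alpha+\delta$): the main term then carries the exponent $\alpha$, while the error term, in which the full $(\alpha+\delta)$-th moment appears raised to $1/\gamma$, carries the exponent $(\alpha+\delta)/\gamma$, reproducing exactly the displayed bound and the inclusion $L_G^{\alpha+\delta}(\Omega_T)\subset L_{\mathcal{E}}^{\alpha}(\Omega_T)$.

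The main obstacle is the maximal inequality for the nonnegative $G$-martingale $R$: Doob's argument is directly available only after fixing a single $P\in\mathcal{P}$, and the delicate point is to recombine these estimates uniformly over the non-dominated family and to control the continuous-time supremum $\sup_{t\in[0,T]}$ without recourse to Fatou's lemma or weak compactness. This is the step that both generates the unusual constants in the statement and demands the most care.
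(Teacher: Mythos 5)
Your proposal cannot be checked against a proof in this paper, because the paper never proves Theorem \ref{the2.4}: it is quoted, constants and all, from Song \cite{S11}, so the only available comparison is with Song's original argument, and your route is genuinely different from it. Mathematically your route is sound. Its two non-classical ingredients are (i) conditional Jensen for the increasing convex map $x\mapsto x^{\gamma}$ on $[0,\infty)$, which does hold for sublinear conditional expectations, and (ii) the domination $E_P[\,\cdot\,|\mathcal{F}_s]\le\hat{\mathbb{E}}_s[\,\cdot\,]$ $P$-a.s.\ for every $P\in\mathcal{P}_M$, i.e.\ the representation of conditional $G$-expectation as an essential supremum. Point (ii) is not something to wave through, but it is a citable theorem (Proposition 3.4 of \cite{STZ11}, Theorem 16 of \cite{HP13}), it is established independently of the present norm estimate, and it is exactly the result this paper itself invokes in the proof of Lemma \ref{the3.4}; using it here is therefore legitimate and non-circular. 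Granting (ii), the ``main obstacle'' you flag largely evaporates: $R_t=\hat{\mathbb{E}}_t[|\xi|^{\alpha\gamma}]$ is a nonnegative $P$-supermartingale whose initial value $R_0=\hat{\mathbb{E}}[|\xi|^{\alpha\gamma}]$ is \emph{deterministic}, so Doob's maximal inequality gives $P(R^{*}\ge\mu)\le R_0/\mu$ with a right-hand side independent of $P$, and taking $\sup_{P\in\mathcal{P}_M}$ costs nothing; the continuous-time supremum is also harmless, since for cylinder $\xi$ the process $t\mapsto\hat{\mathbb{E}}_t[|\xi|^{\alpha\gamma}]$ has continuous paths (it is built from solutions of the $G$-heat equation), or one restricts to rational times. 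Where your write-up is off is only in the bookkeeping you project onto the final bound: your weak-type constant is $1$, not $14$, no summation over integer level sets ever arises, and the layer-cake-plus-optimization yields the homogeneous estimate $\|\xi\|_{\alpha,\mathcal{E}}^{\alpha}\le\gamma^{*}\bigl(\hat{\mathbb{E}}[|\xi|^{\alpha\gamma}]\bigr)^{1/\gamma}\le\gamma^{*}\|\xi\|_{L_G^{\alpha+\delta}}^{\alpha}$, valid for all $1<\gamma\le\beta$ with no need of $\gamma\le2$. So you will not ``reproduce exactly the displayed bound''; you will prove something strictly stronger, which implies the displayed inequality (its second term is nonnegative) and the inclusion $L_G^{\alpha+\delta}(\Omega_T)\subset L_{\mathcal{E}}^{\alpha}(\Omega_T)$ by completion. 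The trade-off between the two proofs is then clear: Song's argument in \cite{S11} stays inside the $G$-framework and avoids the essential-supremum representation --- which is precisely where the artifacts $14^{1/\gamma}$, $C_{\beta/\gamma}$ and the restriction $\gamma\le 2$ come from --- whereas your argument imports that representation and in exchange is shorter, gives cleaner constants, and needs weaker hypotheses on $\gamma$.
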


 Independently, \cite {STZ11} proved $L_G^{\alpha}(\Omega_T)\subset L_{\mathcal{E}}^{2}(\Omega_T)$ for $\alpha>2$.

\begin{definition}
\label{def2.6} Let $M_{G}^{0}(0,T)$ be the collection of processes in the
following form: for a given partition $\{t_{0},\cdot\cdot\cdot,t_{N}\}=\pi
_{T}$ of $[0,T]$,
\[
\eta_{t}(\omega)=\sum_{j=0}^{N-1}\xi_{j}(\omega)\mathbf{1}_{[t_{j},t_{j+1})}(t),
\]
where $\xi_{i}\in L_{ip}(\Omega_{t_{i}})$, $i=0,1,2,\cdot\cdot\cdot,N-1$. For each
$p\geq1$ and $\eta\in M_G^0(0,T)$,  we denote by
\[
\|\eta\|_{H_G^p}=\{\hat{\mathbb{E}}[(\int_0^T|\eta_s|^2ds)^{p/2}]\}^{1/p},\,\,\,\,  \,\,
\Vert\eta\Vert_{M_{G}^{p}}:=({\hat{\mathbb{E}}}[\int_{0}^{T}|\eta_{s}|^{p}ds])^{1/p}.
\]
We use $H_G^p(0,T)$ and  $M_{G}^{p}(0,T)$  to denote
 the completion
of $M_{G}^{0}(0,T)$ under norms $\|\cdot\|_{H_G^p}$ and  $\|\cdot\|_{M_G^p}$ respectively.
\end{definition}

For two processes $ \eta\in M_{G}^{2}(0,T)$ and $ \xi\in M_{G}^{1}(0,T)$,
the $G$-It\^{o} integrals  $(\int^{t}_0\eta_sdB_s)_{0\leq t\leq T}$ and $(\int^{t}_0\xi_sd\langle
B\rangle_s)_{0\leq t\leq T}$  are well defined (see  Li-Peng \cite{LP} and Peng \cite{P10}).  Moreover, by Proposition 2.10 in \cite{LP} and the classical Burkholder-Davis-Gundy inequality, the following property holds.
\begin{proposition}\label{the1.3}
If $\eta\in H_G^{\alpha}(0,T)$ with $\alpha\geq 1$ and $p\in(0,\alpha]$, then we can get
$\sup_{u\in[t,T]}|\int_t^u\eta_s dB_s|^p\in L_G^1(\Omega_T)$ and
\begin{displaymath}
\underline{\sigma}^p c_p\hat{\mathbb{E}}_t[(\int_t^T |\eta_s|^2ds)^{p/2}]\leq
\hat{\mathbb{E}}_t[\sup_{u\in[t,T]}|\int_t^u\eta_s dB_s|^p]\leq
\bar{\sigma}^p C_p\hat{\mathbb{E}}_t[(\int_t^T |\eta_s|^2ds)^{p/2}].
\end{displaymath}
\end{proposition}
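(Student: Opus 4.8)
The plan is to transfer the inequality to the classical stochastic-analysis setting through the representation of the $G$-expectation by a family of probability measures, apply the classical Burkholder-Davis-Gundy inequality under each measure, and then exploit the two-sided volatility bound $\underline{\sigma}\le h\le\bar{\sigma}$ to replace the random quadratic variation by $\int_t^T|\eta_s|^2\,ds$. First I would reduce to a simple integrand $\eta\in M_G^0(0,T)$, for which the $G$-It\^o integral $\int_t^u\eta_s\,dB_s$ is defined explicitly; the general case $\eta\in H_G^\alpha(0,T)$ then follows by density, since $M_G^0(0,T)$ is dense in $H_G^\alpha(0,T)$ and, for $p\in(0,\alpha]$, both sides of the asserted inequality depend continuously on $\eta$ in the $\|\cdot\|_{H_G^\alpha}$ norm (this also yields the claimed integrability $\sup_{u\in[t,T]}|\int_t^u\eta_s\,dB_s|^p\in L_G^1(\Omega_T)$).

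The core estimate is obtained measure by measure. By Remark \ref{rem1.1}, every $P_h\in\mathcal{P}_M$ is the law of $X_\cdot=\int_0^\cdot h_s\,dW_s$ with $h\in[\underline{\sigma},\bar{\sigma}]$, so under $P_h$ the canonical process has quadratic variation $d\langle B\rangle_s=h_s^2\,ds$ and, by Proposition 2.10 in \cite{LP}, the $G$-It\^o integral coincides $P_h$-a.s.\ with the classical It\^o integral. Writing $M_u=\int_t^u\eta_s\,dB_s$, the classical conditional BDG inequality gives, $P_h$-a.s.,
\[
c_p\,E_{P_h}\!\Big[\big(\int_t^T|\eta_s|^2h_s^2\,ds\big)^{p/2}\,\big|\,\mathcal{F}_t\Big]
\le E_{P_h}\!\Big[\sup_{u\in[t,T]}|M_u|^p\,\big|\,\mathcal{F}_t\Big]
\le C_p\,E_{P_h}\!\Big[\big(\int_t^T|\eta_s|^2h_s^2\,ds\big)^{p/2}\,\big|\,\mathcal{F}_t\Big].
\]
Using $\underline{\sigma}^p\big(\int_t^T|\eta_s|^2ds\big)^{p/2}\le\big(\int_t^T|\eta_s|^2h_s^2ds\big)^{p/2}\le\bar{\sigma}^p\big(\int_t^T|\eta_s|^2ds\big)^{p/2}$ pointwise, I obtain the two-sided bound with the constants $\underline{\sigma}^pc_p$ and $\bar{\sigma}^pC_p$ multiplying the common term $E_{P_h}[(\int_t^T|\eta_s|^2ds)^{p/2}\,|\,\mathcal{F}_t]$. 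Finally I would pass to the conditional $G$-expectation through its representation $\hat{\mathbb{E}}_t[\xi]=\esssup_{P'}E_{P'}[\xi\,|\,\mathcal{F}_t]$ as an essential supremum over the measures of $\mathcal{P}$ agreeing with a fixed $P$ up to time $t$; since the constants are deterministic, taking the essential supremum of each side preserves both inequalities and yields the stated estimate.

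The hard part will be the passage to the conditional expectation. The conditional $G$-expectation $\hat{\mathbb{E}}_t$ is not a pointwise supremum of conditional expectations but an essential supremum over a suitably stable subfamily of $\mathcal{P}$, so the argument relies on the dynamic (tower-property) representation of $\hat{\mathbb{E}}_t$ and on verifying that the measure-by-measure inequalities can be aggregated $q.s.$; one must also ensure the identification of the $G$-It\^o integral with the classical one holds simultaneously for all the relevant $P'$. The remaining care is in the density step, where one checks that for $p\in(0,\alpha]$ the map $\eta\mapsto\sup_{u}|\int_t^u\eta_s\,dB_s|^p$ is controlled by $(\int_t^T|\eta_s|^2ds)^{p/2}$ strongly enough to pass to the limit inside $\hat{\mathbb{E}}_t$.
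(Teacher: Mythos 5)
Your proposal is correct and is essentially the argument the paper intends: the paper states this proposition without a written proof, as a direct consequence of Proposition 2.10 in \cite{LP} together with the classical Burkholder-Davis-Gundy inequality, which is precisely the content of your measure-by-measure conditional BDG estimate combined with the bound $\underline{\sigma}^2\,dt\le d\langle B\rangle_t\le\bar{\sigma}^2\,dt$ under each $P\in\mathcal{P}_M$. Your extra steps --- reduction to simple integrands, the identification of the $G$-It\^{o} integral with the classical one under each representing measure, and the aggregation of the two-sided bounds through the representation $\hat{\mathbb{E}}_t[\cdot]=\esssup_{Q\in\mathcal{P}(t,P)}E_Q[\cdot\,|\,\mathcal{F}_t]$ (which works in both directions because the constants $\underline{\sigma}^pc_p$, $\bar{\sigma}^pC_p$ are deterministic) --- are exactly the details that the paper's one-line citation suppresses.
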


Let $S_G^0(0,T)=\{h(t,B_{t_1\wedge t}, \ldots,B_{t_n\wedge t}):t_1,\ldots,t_n\in[0,T],h\in C_{b,Lip}(\mathbb{R}^{n+1})\}$. For $p\geq 1$ and $\eta\in S_G^0(0,T)$, set $\|\eta\|_{S_G^p}=\{\hat{\mathbb{E}}[\sup_{t\in[0,T]}|\eta_t|^p]\}^{1/p}$. Let $S_G^p(0,T)$ denote the completion of $S_G^0(0,T)$ under the norm $\|\cdot\|_{S_G^p}$.

We consider the following type of $G$-BSDEs 
\begin{equation}\label{eq1}
Y_t=\xi+\int_t^T g(s,Y_s,Z_s)ds+\int_t^T f(s,Y_s,Z_s)d\langle B\rangle_s-\int_t^T Z_s dB_s-(K_T-K_t),
\end{equation}
where $g$ and $f$ are given functions
\begin{displaymath}
g(t,\omega,y,z),f(t,\omega,y,z):[0,T]\times\Omega_T\times\mathbb{R}\times\mathbb{R}\rightarrow \mathbb{R}
\end{displaymath}
satisfy the following properties:
\begin{description}
\item[(H1)] There exists some $\beta>1$ such that for any $y,z$, $g(\cdot,\cdot,y,z),f(\cdot,\cdot,y,z)\in M_G^{\beta}(0,T)$;
\item[(H2)] There exists some $L>0$ such that
\begin{displaymath}
|g(t,y,z)-g(t,y',z')|+|f(t,y,z)-f(t,y',z')|\leq L(|y-y'|+|z-z'|).
\end{displaymath}
\end{description}

For simplicity, we denote by $\mathfrak{S}_G^{\alpha}(0,T)$ the collection of processes $(Y,Z,K)$ such that $Y\in S_G^{\alpha}(0,T)$, $Z\in H_G^{\alpha}(0,T)$, $K$ is a decreasing $G$-martingale with $K_0=0$ and $K_T\in L_G^{\alpha}(\Omega_T)$.

\begin{definition}
Let $\xi\in L_G^{\beta}(\Omega_T)$ and $g$ and $f$ satisfy (H1) and (H2) for some $\beta>1$. A triplet of processes $(Y,Z,K)$ is called a solution of equation \eqref{eq1} if for some $1<\alpha\leq \beta$ the following properties hold:
\begin{description}
\item[(a)]$(Y,Z,K)\in\mathfrak{S}_G^{\alpha}(0,T)$;
\item[(b)]$Y_t=\xi+\int_t^T g(s,Y_s,Z_s)ds+\int_t^T f(s,Y_s,Z_s)d\langle B\rangle_s-\int_t^T Z_s dB_s-(K_T-K_t)$.
\end{description}
\end{definition}

\begin{theorem}[\cite{HJPS}]\label{the2.7}
Assume that $\xi\in L_G^{\beta}(\Omega_T)$ and $g$, $f$ satisfy (H1) and (H2) for some $\beta>1$. Then equation \eqref{eq1} has a unique solution $(Y,Z,K)$. Moreover, for any $1<\alpha< \beta$ we have $Y\in S_G^{\alpha}(0,T)$, $Z\in H_G^{\alpha}(0,T)$ and $K_T\in L_G^{\alpha}(\Omega_T)$.
\end{theorem}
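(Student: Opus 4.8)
The plan is to follow the classical BSDE strategy of Picard iteration, but with two essential substitutions adapted to the $G$-framework: a martingale representation that generates the decreasing part $K$, and the norm comparison of Theorem \ref{the2.4}, which compensates for the absence of a Doob-type maximal inequality under $\hat{\mathbb{E}}$. First I would treat the decoupled case in which $g$ and $f$ do not depend on $(y,z)$, so that $g_s:=g(s,\cdot)$ and $f_s:=f(s,\cdot)$ are fixed processes in $M_G^{\beta}(0,T)$. Then I would obtain the general solution as the fixed point of the map sending $(y,z)$ to the solution of the decoupled equation driven by $g(\cdot,y_\cdot,z_\cdot)$ and $f(\cdot,y_\cdot,z_\cdot)$.

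For the decoupled equation I would set $X_t:=\hat{\mathbb{E}}_t[\xi+\int_0^T g_s\,ds+\int_0^T f_s\,d\langle B\rangle_s]$, which is well defined in $L_G^{\beta}$ by (H1) and is a $G$-martingale. By the representation theorem for $G$-martingales it admits a decomposition $X_t=X_0+\int_0^t Z_s\,dB_s+\bar{K}_t$ with $Z\in H_G^{\alpha}(0,T)$ and $\bar{K}$ a decreasing $G$-martingale, $\bar{K}_0=0$. Setting $Y_t:=X_t-\int_0^t g_s\,ds-\int_0^t f_s\,d\langle B\rangle_s$ and $K_t:=-\bar{K}_t$, a short computation shows that $(Y,Z,K)$ solves \eqref{eq1} with terminal value $\xi$; in particular $Y_T=\xi$ and $K$ is decreasing with $K_0=0$.

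The quantitative heart of the argument is a set of a priori estimates. Given two solutions associated with data $(\xi^i,g^i,f^i)$, I would apply the $G$-It\^o formula to a suitable power of $|Y^1_t-Y^2_t|$ and use the Lipschitz assumption (H2) together with the $G$-BDG inequality of Proposition \ref{the1.3} to bound $\|Z^1-Z^2\|_{H_G^{\alpha}}$ and $\|Y^1-Y^2\|_{S_G^{\alpha}}$ by the differences of the data; rearranging the equation then expresses $K^1_T-K^2_T$ through the remaining terms and yields an estimate for $\|K^1_T-K^2_T\|_{L_G^{\alpha}}$ as well. The delicate point is the $S_G^{\alpha}$ (supremum-in-time) bound on $Y$: since Doob's maximal inequality is not available under a sublinear expectation, I would pass through the $G$-evaluation norm $\|\cdot\|_{\alpha,\mathcal{E}}$ and invoke Theorem \ref{the2.4} to return to an $L_G^{\alpha+\delta}$ bound. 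This is precisely why the conclusion holds only for $1<\alpha<\beta$, with an unavoidable loss of integrability.

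Finally I would close the argument by contraction. On a short interval $[T-\delta,T]$, with $\delta$ depending only on $L$, $\underline{\sigma}$ and $\bar{\sigma}$, the Lipschitz estimates above make the Picard map a strict contraction on the analogue of $\mathfrak{S}_G^{\alpha}$ over $[T-\delta,T]$, so Banach's fixed point theorem gives a unique solution there; concatenating finitely many such intervals yields the solution on all of $[0,T]$, and uniqueness over $[0,T]$ follows from the same a priori estimates. I expect the main obstacle to be the simultaneous control of the decreasing part $K$ and of the supremum norm of $Y$: producing $K$ at all relies on the (deep) $G$-martingale representation, and estimating $Y$ in $S_G^{\alpha}$ forces the replacement of Doob's inequality by the combination of Proposition \ref{the1.3} and Theorem \ref{the2.4}. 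This is where the genuinely $G$-specific difficulties mentioned in the introduction—non-dominated families of measures, failure of weak compactness, and the unavailability of Fatou's lemma—are concentrated.
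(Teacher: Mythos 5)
This theorem is not proved in the paper at all: it is quoted from \cite{HJPS}, so your proposal must be measured against the proof given there. Your first step is essentially sound: for data $g_s,f_s$ not depending on $(y,z)$, setting $X_t=\hat{\mathbb{E}}_t[\xi+\int_0^T g_s\,ds+\int_0^T f_s\,d\langle B\rangle_s]$ and invoking Song's $G$-martingale representation $X_t=X_0+\int_0^t Z_s\,dB_s+\bar{K}_t$ does produce a solution of \eqref{eq1} --- note, though, that you should take $K=\bar{K}$ itself, not $K=-\bar{K}$: equation \eqref{eq1} carries the term $-(K_T-K_t)$ with $K$ decreasing, and your choice makes $K$ increasing. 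Your identification of Theorem \ref{the2.4} as the substitute for Doob's inequality, and of this as the source of the loss from $\beta$ to $\alpha<\beta$, is also correct and is indeed how the estimates in \cite{HJPS} work.

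The genuine gap is the contraction step, and it is not a repairable detail: it is precisely the obstruction that forced \cite{HJPS} to abandon Picard iteration. The difficulty is that the difference of two $G$-martingales is not a $G$-martingale. If $X^i_t=X^i_0+\int_0^t Z^i_s\,dB_s+K^i_t$, $i=1,2$, are the value processes produced by two inputs $(y^i,z^i)$, then $\hat{X}=X^1-X^2$ has decomposition $\int\hat{Z}\,dB+(K^1-K^2)$, and the finite-variation parts do not cancel: each $K^i$ has total variation of order one, controlled by the data but \emph{not} by the difference of the inputs (recall that $\hat{\mathbb{E}}_t[-(K_T-K_t)]\neq 0$ for a decreasing $G$-martingale $K$; $G$-martingales are not symmetric). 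Consequently, when you apply It\^{o}'s formula to $|\hat{X}|^2$ to estimate $\hat{Z}$, you are left with the cross term $\int_0^T\hat{X}_s\,d(K^1_s-K^2_s)$, which can only be bounded by $\sup_t|\hat{X}_t|\,(|K^1_T|+|K^2_T|)$, giving
\[
\hat{\mathbb{E}}\Big[\int_0^T|\hat{Z}_s|^2\,ds\Big]\le C\Big\{\|\hat{X}\|_{S_G^2}^2+\|\hat{X}\|_{S_G^2}\big(\|K_T^1\|_{L_G^2}+\|K_T^2\|_{L_G^2}\big)\Big\}.
\]
So the solution map is only $1/2$-H\"older, not Lipschitz, in the $Z$-component: an iteration obeying $a_{n+1}\le C\sqrt{Ma_n}$ stalls at the level $C^2M$ instead of converging to zero, and shrinking the interval $[T-\delta,T]$ does not restore a contraction, because $M$ (the size of the $K$'s) never becomes small relative to the quantity being contracted. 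This is why the actual proof in \cite{HJPS} takes a different route: a priori estimates on solutions and on differences of solutions (these are H\"older in the data --- enough for uniqueness and for passing to limits, but not for a fixed point); then an explicit construction of the solution for a dense class of data (cylinder-type terminal values and Markovian-type generators) by solving the associated fully nonlinear PDE via the nonlinear Feynman--Kac formula on each subinterval of a partition; and finally an approximation argument in $L_G^\beta$ and $M_G^\beta$ showing that the corresponding solutions form a Cauchy sequence in $\mathfrak{S}_G^{\alpha}(0,T)$. Existence thus ultimately rests on PDE theory, not on Banach's fixed point theorem.
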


We also have the comparison theorem for $G$-BSDE.
\begin{theorem}[\cite{HJPS1}]\label{the2.8}
Let $(Y_t^i,Z_t^i,K_t^i)_{t\leq T}$, $i=1,2$, be the solutions of the following two $G$-BSDEs:
\begin{displaymath}
Y^i_t=\xi^i+\int_t^T g_i(s)ds+\int_t^T f_i(s)d\langle B\rangle_s
+V_T^i-V^i_t-\int_t^T Z^i_s dB_s-(K^i_T-K^i_t),
\end{displaymath}
where $g_i(s)=g_i(s,Y^i_s,Z^i_s)$, $f_i(s)=f_i(s,Y^i_s,Z^i_s)$, $\xi^i\in L_G^{\beta}(\Omega_T)$, $\{V_t^i\}_{t\in[0,T]}$ are RCLL processes such that $\hat{\mathbb{E}}[\sup_{t\in[0,T]}|V_t^i|^\beta]<\infty$, $g_i,f_i$  satisfy (H1) and (H2) with $\beta>1$. Assume that $\xi^1\geq \xi^2$, $f_1\geq f_2$, $g_1\geq g_2$ and $\{V^1_t-V^2_t\}$ is a nondecreasing process, then $Y_t^1\geq Y_t^2$.
\end{theorem}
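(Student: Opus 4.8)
The plan is to reduce the inequality $Y^1_t\ge Y^2_t$ to the nonnegativity of the difference process and to exploit the nonlinear (sublinear) martingale structure of $\hat{\mathbb{E}}$ directly, rather than arguing measure-by-measure under each $P\in\mathcal{P}$. Write $\hat Y=Y^1-Y^2$, $\hat Z=Z^1-Z^2$, $\hat V=V^1-V^2$ and $\hat\xi=\xi^1-\xi^2\ge 0$. Using the Lipschitz assumption (H2), I would linearize both generators: set $g_1(s,Y^1_s,Z^1_s)-g_2(s,Y^2_s,Z^2_s)=a_s\hat Y_s+b_s\hat Z_s+\bar g_s$, where $|a_s|,|b_s|\le L$ and $\bar g_s:=g_1(s,Y^2_s,Z^2_s)-g_2(s,Y^2_s,Z^2_s)\ge 0$ by the hypothesis $g_1\ge g_2$, and similarly for $f$ with bounded coefficients $c_s,d_s$ and a nonnegative remainder $\bar f_s\ge 0$. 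This turns the equation for $\hat Y$ into a linear $G$-BSDE, driven by $dB$ and $d\langle B\rangle$, with a nonnegative terminal value, nonnegative $ds$- and $d\langle B\rangle$-sources, a nondecreasing process $\hat V$, and the awkward finite-variation term $-(K^1_T-K^1_t)+(K^2_T-K^2_t)$.

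Next I would strip off the $\hat Y$- and $\hat Z$-dependence. The $\hat Z$-linear terms $\int_t^T b_s\hat Z_s\,ds$ and $\int_t^T d_s\hat Z_s\,d\langle B\rangle_s$ are absorbed by a $G$-Girsanov transformation, passing to an equivalent sublinear expectation under which a shifted canonical process $\tilde B$ is again a $G$-Brownian motion and $\int\hat Z\,dB$ is converted into $\int\hat Z\,d\tilde B$; the nondegeneracy $\underline\sigma>0$ is what allows one to treat the $ds$-type drift $b_s\hat Z_s$ and not merely the $d\langle B\rangle$-type drift. The $\hat Y$-linear terms are then cleared by the integrating factor $\Gamma_t=\exp\bigl(\int_0^t a_s\,ds+\int_0^t c_s\,d\langle B\rangle_s\bigr)$. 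One must check that these transformations keep $K^1,K^2$ as $G$-martingales and keep $\int\hat Z\,d\tilde B$ a symmetric $G$-martingale; this is the input I would borrow from the $G$-Girsanov machinery. The upshot is that the general statement reduces to the model case in which $g,f$ carry no $(y,z)$-dependence.

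In that model case the difference solves, q.s.,
\[
\hat Y_t=A+M,\quad A:=\hat\xi+\int_t^T \bar g_s\,ds+\int_t^T \bar f_s\,d\langle B\rangle_s+(\hat V_T-\hat V_t)-(K^1_T-K^1_t),\quad M:=(K^2_T-K^2_t)-\int_t^T\hat Z_s\,dB_s,
\]
where $A\ge 0$ (using $\bar g_s,\bar f_s\ge 0$, $\hat V$ nondecreasing, and $-(K^1_T-K^1_t)\ge 0$ since $K^1$ is decreasing), and $M$ is the sum of a symmetric $G$-martingale increment and the increment of the $G$-martingale $K^2$. The key computation is that $\hat{\mathbb{E}}_t[M]=0$: since $\pm\int_t^T\hat Z_s\,dB_s$ are $G$-martingale increments with $\hat{\mathbb{E}}_t[\pm\int_t^T\hat Z_s\,dB_s]=0$ and $K^2$ is a $G$-martingale, applying sub-additivity in both directions forces the conditional expectation of the grouped increment to vanish. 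Since $\hat Y_t$ is $\mathcal{F}_t$-measurable, $\hat Y_t=\hat{\mathbb{E}}_t[A+M]$; sub-additivity gives $\hat{\mathbb{E}}_t[M]\le\hat{\mathbb{E}}_t[A+M]+\hat{\mathbb{E}}_t[-A]$, whence $\hat Y_t=\hat{\mathbb{E}}_t[A+M]\ge\hat{\mathbb{E}}_t[M]-\hat{\mathbb{E}}_t[-A]=-\hat{\mathbb{E}}_t[-A]\ge 0$, the last step because $-A\le 0$ and monotonicity give $\hat{\mathbb{E}}_t[-A]\le 0$. This yields $\hat Y_t\ge 0$, i.e. $Y^1_t\ge Y^2_t$.

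The main obstacle is exactly the term $+(K^2_T-K^2_t)$ inherited from the dominated equation: $K^2$ is a decreasing, hence ``wrong-signed'', $G$-martingale, so it cannot be discarded by any monotonicity argument, and, unlike the classical theory, one cannot simply fix $P\in\mathcal{P}$ and invoke the usual comparison theorem — under a single $P$ the increment $K^2_T-K^2_t$ is only a supermartingale increment of unfavorable sign, and the naive comparison fails. What rescues the argument is the genuinely nonlinear fact that a decreasing $G$-martingale nonetheless has vanishing conditional $G$-expectation increment, so it can be paired with the symmetric stochastic-integral martingale and annihilated in expectation. The secondary technical point, to be handled with care, is the removal of the $ds$-type $\hat Z$-drift by the $G$-Girsanov transformation, which relies essentially on the nondegeneracy $\underline\sigma>0$.
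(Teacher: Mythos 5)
Your final ``model case'' computation is correct, and it isolates a genuinely useful nonlinear-expectation fact: when $g,f$ carry no $(y,z)$-dependence, writing $\hat Y_t=A+M$ with $A\ge 0$ and $M=(K^2_T-K^2_t)-\int_t^T\hat Z_s\,dB_s$, sub-additivity applied in both directions gives $\hat{\mathbb{E}}_t[M]=0$ (the decreasing $G$-martingale $K^2$ has vanishing conditional increment, and the stochastic integral is symmetric), and monotonicity then yields $\hat Y_t=\hat{\mathbb{E}}_t[A+M]\ge\hat{\mathbb{E}}_t[M]=0$. The gap is the reduction of the general case to this model case. The linearization coefficients $a_s,b_s,c_s,d_s$ are bounded and adapted, but they are difference quotients defined via indicators on the set $\{\hat Y_s\neq 0\}$ (resp. $\{\hat Z_s\neq 0\}$), built from $Y^i\in S_G^\alpha(0,T)$ and, worse, from $Z^i$, which lie only in $H_G^\alpha(0,T)$ and have no path regularity. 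Such processes are in general not quasi-continuous and therefore do not belong to the spaces $M_G^p(0,T)$ on which $G$-stochastic calculus operates. Every subsequent step of your reduction --- defining $\Gamma_t=\exp(\int_0^t a_s\,ds+\int_0^t c_s\,d\langle B\rangle_s)$ as an admissible process, applying $G$-It\^{o}'s formula to $\Gamma_t\hat Y_t$, invoking the analogue of Lemma 3.4 of \cite{HJPS} to keep $\int_0^\cdot\Gamma_s\,dK^i_s$ a decreasing $G$-martingale, and performing the Girsanov transformation --- requires membership in those spaces. This is not a technicality: it is precisely the obstruction that prevents the classical linearization proof of the comparison theorem from transferring to the $G$-framework, and it is why \cite{HJPS1} had to construct a very different argument.

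The Girsanov step has a second, independent defect. The $G$-Girsanov transformation of \cite{HJPS1} removes drifts of the form $b_sz\,d\langle B\rangle_s$; to absorb the $ds$-drift $b_s\hat Z_s\,ds$ you would need to rewrite it as $(ds/d\langle B\rangle_s)\,b_s\hat Z_s\,d\langle B\rangle_s$, and the density $ds/d\langle B\rangle_s$, though bounded q.s.\ by nondegeneracy, is again not an admissible integrand. Moreover, the transformed object $\tilde{\mathbb{E}}$ is a consistent nonlinear expectation defined \emph{through a $G$-BSDE with $z$-dependent generator}: the monotonicity and sub-additivity of $\tilde{\mathbb{E}}_t$ on general random variables --- which your final argument needs --- are themselves instances of the comparison theorem being proved, so the borrowed ``input'' is circular; and nothing in that machinery asserts that $K^1,K^2$ remain $G$-martingales under $\tilde{\mathbb{E}}$ (there is no reason they should). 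The route that actually works, and the one consistent with how the present paper uses this theorem (compare the proofs of Lemma \ref{the3.4} and Lemma \ref{the3.5}), is the measure-by-measure one you rejected, made correct by the esssup representation: fix $P\in\mathcal{P}_M$, write $Y^2_t={\esssup_{Q\in\mathcal{P}(t,P)}}^P E^{g_2,Q}_{t,T}[\cdot]$ --- this is exactly how the unfavorable increment $K^2_T-K^2_t$ is neutralized, since its conditional expectations have essential supremum zero --- note that under each $Q$ the process $Y^1$ is a classical supersolution, apply the classical comparison theorem under $Q$, and take the essential supremum over $Q\in\mathcal{P}(t,P)$ to get $Y^1_t\ge Y^2_t$ $P$-a.s., hence q.s.
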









\subsection{Some results of classical penalized BSDEs}

In this subsection, we will introduce some notions and results following Peng \cite{P99}. The probability space and filtration is given in Remark \ref{rem1.1}. 
 For a given stopping time $\tau$, we now consider the following classical BSDE:
\begin{equation}\label{eq2}
y_t=\xi+\int_{t\wedge\tau}^\tau g(s,y_s,z_s)ds+(A_\tau-A_{t\wedge\tau})-\int_{t\wedge\tau}^\tau z_sdW_s,
\end{equation}
where $\xi\in L^2(\Omega,\mathcal{F}_\tau)$ and $g$ satisfies the following conditions:

 \begin{description}
\item [(A1)] $g(\cdot,y,z)\in L^2_{\mathbb{F}}(0,T;\mathbb{R})$, for each $(y,z)\in \mathbb{R}^{2}$;
\item [(A2)] There exists a constant $L>0$ such that
 \begin{displaymath}
|g(t,y,z)-g(t,y',z')|\leq L(|y-y'|+|z-z'|).
\end{displaymath}
 \end{description}

 Here $A$ is a given RCLL increasing process with $A_0=0$ and $E[A_\tau^2]<\infty$. We call $(y_t)$ the $g$-supersulotion on $[0,\tau]$ if $(y,z)$ solves \eqref{eq2}. In particular, when $A\equiv 0$, $(y_t)$ is called a $g$-solution on $[0,\tau]$.

\begin{definition}
An $\mathcal{F}_t$-progressively measurable real-valued process $(Y_t)$ is called a $g$-supermartingale on $[0,T]$ in strong sense if, for each stopping time $\tau\leq T$, $E[|Y_\tau|^2]<\infty$, and the $g$-solution $(y_t)$ on $[0,\tau]$ with terminal condition $y_\tau=Y_\tau$, satisfies $y_\sigma\leq Y_\sigma$ for all stopping time $\sigma\leq \tau$.
\end{definition}

\begin{definition}
An $\mathcal{F}_t$-progressively measurable real-valued process $(Y_t)$ is called a $g$-supermartingale on $[0,T]$ in weak sense if, for each deterministic time $ t\leq T$, $E[|Y_t|^2]<\infty$, and the $g$-solution $(y_t)$ on $[0,t]$ with terminal condition $y_t=Y_t$, satisfies $y_s\leq Y_s$ for all deterministic time $s\leq t$.
\end{definition}

It is obvious that a $g$-supermartingale in strong sense is also a $g$-supermartingale in weak sense. \cite{CP} proved that, under assumptions similar to the classical case, a $g$-supermartingale in weak sense coincides with a $g$-supermartingale in strong sense. This result is a generalization of the classical Optional Stopping Theorem. If $(Y_t)$ is a $g$-supersolution on $[0,T]$, it follows from the comparison theorem that $(Y_t)$ is a $g$-supermartingale. In fact, \cite{P99} proved that the inverse problem, i.e., nonlinear version of Doob-Meyer decomposition theorem, also holds. The method of proof is to apply the penalization approach and the first step is the following lemma.

\begin{lemma}[\cite{P99}]\label{the2.11}
Let $(Y_t)$ be a right-continuous $g$-supermartingale on $[0,T]$ in strong sense with $E[\sup_{0\leq t\leq T}|Y_t|^2]\leq \infty$. Assume that $g$ satisfies (A1) and (A2). For each $n=1,2,\cdots$, consider the following BSDEs:
 \begin{displaymath}
y_t^n=Y_T+\int_t^T g(s,y_s^n,z_s^n)ds+n\int_t^T(Y_s-y_s^n)ds-\int_t^T z_s^ndW_s.
\end{displaymath}
Then, for each $n=1,2,\cdots$, $Y_t\geq y_t^n$.
\end{lemma}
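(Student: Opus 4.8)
The plan is to first observe that, for each fixed $n$, the penalized equation is a standard Lipschitz BSDE: its driver $(s,y,z)\mapsto g(s,y,z)+n(Y_s-y)$ is Lipschitz in $(y,z)$ by (A2), while the inhomogeneous part $s\mapsto nY_s$ and the terminal value $Y_T$ both lie in $L^2$ thanks to the hypothesis $E[\sup_{t}|Y_t|^2]<\infty$. Hence $(y^n,z^n)$ exists and is unique, and it suffices to establish $y^n_t\le Y_t$ at each deterministic $t$; the full statement then follows from the right-continuity of $Y$ and of $y^n$.

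The heart of the argument is a backward time-discretization in which the continuous penalization is replaced by an implicit Euler step and the $g$-dynamics by the one-step $g$-evaluation. Writing $\mathcal{E}^g_{s,t}[\cdot]$ for the map sending a terminal value $\eta\in L^2(\mathcal{F}_t)$ to the time-$s$ value of the $g$-solution on $[s,t]$, I fix a partition $0=t_0<\cdots<t_N=T$ with mesh $\delta$, set $\delta_k=t_{k+1}-t_k$, and define $\bar{y}^\pi$ backward by $\bar{y}_{t_N}=Y_T$ and
\[
\bar{y}_{t_k}=\frac{\mathcal{E}^g_{t_k,t_{k+1}}[\bar{y}_{t_{k+1}}]+n\delta_k\,Y_{t_k}}{1+n\delta_k},
\]
which is precisely the solution of $\bar{y}_{t_k}=\mathcal{E}^g_{t_k,t_{k+1}}[\bar{y}_{t_{k+1}}]+n(Y_{t_k}-\bar{y}_{t_k})\delta_k$, i.e.\ the natural time-discrete analogue of the penalized BSDE. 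The crucial structural point is that $\bar{y}_{t_k}$ is a convex combination of $\mathcal{E}^g_{t_k,t_{k+1}}[\bar{y}_{t_{k+1}}]$ and $Y_{t_k}$, with weights $\tfrac{1}{1+n\delta_k}$ and $\tfrac{n\delta_k}{1+n\delta_k}$.

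From here I would prove $\bar{y}_{t_k}\le Y_{t_k}$ for every $k$ by backward induction. The base case is the identity $\bar{y}_{t_N}=Y_{t_N}$. For the inductive step, assuming $\bar{y}_{t_{k+1}}\le Y_{t_{k+1}}$, the comparison theorem for classical BSDE (monotonicity of $\mathcal{E}^g_{t_k,t_{k+1}}$) together with the $g$-supermartingale property of $Y$ gives $\mathcal{E}^g_{t_k,t_{k+1}}[\bar{y}_{t_{k+1}}]\le\mathcal{E}^g_{t_k,t_{k+1}}[Y_{t_{k+1}}]\le Y_{t_k}$; substituting into the convex combination yields $\bar{y}_{t_k}\le Y_{t_k}$. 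Note that only the comparison theorem and the supermartingale inequality at deterministic times enter here, so no optional-sampling subtlety is needed at this stage.

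Finally I would let the mesh tend to zero. Standard stability and convergence estimates for time-discretizations of Lipschitz BSDE (the driver and the penalization being Lipschitz, the data in $L^2$) show that $\bar{y}^\pi_{t}\to y^n_{t}$ as $\delta\to0$; since any target time may be taken as a partition point, passing to the limit in $\bar{y}^\pi_t\le Y_t$ gives $y^n_t\le Y_t$, hence $Y_t\ge y^n_t$ for all $t$. I expect the main obstacle to be exactly this last step: controlling the error of the mixed $g$-step/Euler-penalization scheme and verifying its convergence to the continuous penalized solution. (For $g\equiv0$ the scheme can be bypassed: the explicit linear representation $e^{-nt}y^n_t=E[e^{-nT}Y_T+n\int_t^Te^{-ns}Y_s\,ds\mid\mathcal{F}_t]$, combined with a discrete summation-by-parts driven by $E[Y_{t_{k+1}}\mid\mathcal{F}_{t_k}]\le Y_{t_k}$, proves $y^n_t\le Y_t$ directly, which is the mechanism the discretization reproduces in the nonlinear case.)
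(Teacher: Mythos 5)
Your discrete construction and the backward induction are correct as far as they go: $\bar y_{t_k}$ is indeed a convex combination of $\mathcal{E}^g_{t_k,t_{k+1}}[\bar y_{t_{k+1}}]$ and $Y_{t_k}$, and the classical comparison theorem together with the supermartingale inequality at deterministic times yields $\bar y_{t_k}\le Y_{t_k}$. Be aware, though, that this is a genuinely different route from the proof this lemma actually rests on: the paper does not prove the statement but quotes it from \cite{P99}, and Peng's argument there is a stopping-time argument -- on the event $\{y^n_t>Y_t\}$ one introduces $\sigma=\inf\{s\ge t:\ y^n_s\le Y_s\}$, notes that the penalization term $n(Y_s-y^n_s)$ is nonpositive on $[t,\sigma]$, and concludes by comparison plus the supermartingale property applied at the stopping time $\sigma$. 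That is exactly why the lemma is stated for $g$-supermartingales \emph{in strong sense}. Your scheme never invokes stopping times, so, if completed, it would prove the stronger assertion that the weak supermartingale property suffices; this is consistent with the equivalence of the two notions proved in \cite{CP}, but it should make you suspicious that the real difficulty has moved elsewhere.

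It has: the genuine gap is the final limit $\bar y^{\pi}_t\to y^n_t$, which you dismiss as ``standard stability and convergence estimates for time-discretizations of Lipschitz BSDE.'' No standard result covers your scheme. It is not the usual Euler discretization treated in the literature on numerical BSDEs (where the data are functions of a forward diffusion with known time-regularity); it is an operator-splitting scheme -- exact $g$-evolution inside each subinterval, implicit penalization applied only at grid points -- and the driver of the target equation contains the process $Y_s$ itself, which is merely adapted, right-continuous and $L^2$-bounded, with no modulus of continuity in time. To close the gap you would need at least: (i) a priori bounds on $\max_k E|\bar y_{t_k}|^2$ and a modulus estimate for the interpolated scheme, uniform over partitions; (ii) the approximation estimate $E\int_0^T|Y_{\pi(s)}-Y_s|^2\,ds\to 0$ as the mesh tends to $0$, where $\pi(s)$ is the grid point below $s$ -- this is true, but requires an argument: one must show that a right-continuous path has at most countably many discontinuities (each point of $\{x:\ \limsup_{y\uparrow x}|f(y)-f(x)|>\epsilon\}$ has a right-neighbourhood containing no other such point), and then use a.e.\ pathwise convergence dominated by $\sup_t|Y_t|\in L^2$; and (iii) a Gronwall-type comparison between the interpolated scheme, rewritten as a penalized BSDE whose penalization charges only the grid times, and the true penalized equation, keeping track of the jump of size $n\delta_k(Y_{t_k}-\bar y_{t_k})$ that your scheme introduces at each node. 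With $n$ fixed, the constants involving the Lipschitz norm $L+n$ are harmless, so this program can in principle be carried out; but as written, your proof omits precisely the step in which all of the analytic work of this approach is concentrated.
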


\begin{remark}\label{the2.12}
Set $M_t=\int_0^t h_sdW_s$, where $h\in L_{\mathbb{F}}^2([0,T];[\underline{\sigma},\overline{\sigma}])$. If the BSDE \eqref{eq2} is driven by $M$,
 \begin{displaymath}
y_t=\xi+\int_{t\wedge\tau}^\tau g(s,y_s,z_s)ds+(A_\tau-A_{t\wedge\tau})-\int_{t\wedge\tau}^\tau z_sdM_s,
\end{displaymath}
then, we can define a $g_M$-supersolution (also $g_M$-solution) and a $g_M$-supermartingale in strong sense (also in weak sense). Furthermore, we have a similar result as Lemma \ref{the2.11}.
\end{remark}

\section{Nonlinear expectations generated by $G$-BSDEs and the associated supermartingales}

For simplicity, we only consider the following $G$-BSDE driven by 1-dimensional $G$-Brownian motion. The result still holds for  multi-dimensional cases.
\begin{equation}\label{eq3}
Y_t^{T,\xi}=\xi+\int_t^T g(s,Y_s^{T,\xi},Z_s^{T,\xi})ds\\
-\int_t^T Z_s^{T,\xi} dB_s-(K_T^{T,\xi}-K_t^{T,\xi}),
\end{equation}
where $g$  satisfies the following conditions:
\begin{description}
\item[(H1')] There exists some $\beta>2$ such that for any $y,z$, $g(\cdot,\cdot,y,z)\in M_G^{\beta}(0,T)$;
\item[(H2)] There exists some $L>0$ such that
\begin{displaymath}
|g(t,y,z)-g(t,y',z')|\leq L(|y-y'|+|z-z'|).
\end{displaymath}
\end{description}

 For each $\xi\in L_G^{\beta}(\Omega_T)$ with $\beta>2$, we define
\begin{displaymath}
\hat{\mathbb{E}}^{g}_{t,T}[\xi]:=Y_t^{T,\xi}.
\end{displaymath}

\begin{definition}
A process $\{Y_t\}_{t\in[0,T]}$ 
is called an  $\mathbb{\hat{E}}^{g}$-supermartingale if, for each $t\leq T$, $Y_t\in L_G^\beta(\Omega_t)$ with $\beta>2$ and $\hat{\mathbb{E}}^g_{s,t}[Y_t]\leq Y_s$, $\forall 0\leq s\leq t\leq T$.
\end{definition}

\begin{remark}
(i)If $g=0$, the  $\mathbb{\hat{E}}^{g}$-supermartingale $\{Y_t\}_{t\in[0,T]}$ is in fact a $G$-supermartingale.\\
(ii)If the decreasing $G$-martingale $(K_t)$ in \eqref{eq3} is replaced by a continuous decreasing process $A$ with $A_0=0$, $\hat{\mathbb{E}}[A_T^2]<\infty$, then $(Y_t)$ is called a ${g}$-supersolution under $\mathbb{\hat{E}}$ on $[0,T]$. It follows from the comparison theorem of $G$-BSDE  that a ${g}$-supersolution under $\hat{\mathbb{E}}$ is also an  $\mathbb{\hat{E}}^{g}$-supermaringale.\\
(iii)If there exists a generator $f$ corresponding to the $d\langle B\rangle$ term in \eqref{eq3}, we can define the operator $\mathbb{\hat{E}}^{g,f}_{t,T}[\cdot]$ and the associated  $\mathbb{\hat{E}}^{g,f}$-supermartingales.
\end{remark}

The following theorem, which is a main result of this paper, tells us  that an  $\mathbb{\hat{E}}^{g}$-supermartingale is also a $g$-supersolution under $\hat{\mathbb{E}}$. It generalizes the well-known decomposition theorem of Doob-Meyer's type to a framework of fully nonlinear expectation--G-expectation.



\begin{theorem}\label{the3.3}
Let $Y=(Y_t)_{t\in[0,T]}\in S_G^\beta(0,T)$ be an  $\mathbb{\hat{E}}^{g}$-supermartingale with $\beta>2$. Suppose that $g$ satisfies (H1') and (H2). Then $(Y_t)$ has the following decomposition
\begin{equation}\label{eq4}
Y_t=Y_0-\int_0^t g(s,Y_s,Z_s) ds+\int_0^t Z_s dB_s-A_t,\quad \textrm{q.s.},
\end{equation}
where $\{Z_t\}\in M_G^2(0,T)$ and $\{A_t\}$ is a continuous nondecreasing process with $A_0=0$ and $A_T\in L_G^2(\Omega_T)$. Furthermore, the above decomposition is unique.
\end{theorem}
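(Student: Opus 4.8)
The plan is to realise $Y$ as the increasing limit of a penalized family of $g$-solutions under $\hat{\mathbb{E}}$, following the scheme sketched in the introduction, and to reduce the crucial comparison to the classical framework measure by measure. I would first settle uniqueness, which is the softer part. If $(Z^1,A^1)$ and $(Z^2,A^2)$ both satisfy \eqref{eq4}, subtracting the two identities shows that $\int_0^t(Z_s^1-Z_s^2)\,dB_s$ coincides with the continuous finite-variation process $\int_0^t[g(s,Y_s,Z_s^1)-g(s,Y_s,Z_s^2)]\,ds+(A_t^1-A_t^2)$. Taking quadratic variation forces $\int_0^t(Z_s^1-Z_s^2)^2\,d\langle B\rangle_s=0$ q.s., and since $\underline{\sigma}>0$ this gives $Z^1=Z^2$ in $M_G^2(0,T)$ and then $A^1=A^2$.

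For existence, for each $n$ let $(y^n,z^n,K^n)$ be the solution of the $G$-BSDE with terminal value $Y_T$ and generator $g^n(s,y,z):=g(s,y,z)+n(Y_s-y)$; since $Y\in S_G^\beta(0,T)\subset M_G^\beta(0,T)$, the generator $g^n$ still satisfies (H1') and (H2), so existence and uniqueness follow from Theorem \ref{the2.7}. Writing $L_t^n:=n\int_0^t(Y_s-y_s^n)\,ds$, this is exactly the penalized equation of the introduction. The key point is the inequality $y^n\le Y$ q.s. I would fix an arbitrary representing measure $P=P_h$ as in Remark \ref{rem1.1} and invoke the representation of the solution of a $G$-BSDE as an essential supremum, over the measures $Q\in\mathcal{P}$ agreeing with $P$ up to the current time, of the solutions of the corresponding classical BSDEs driven by the martingale part of $B$ under $Q$. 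Applying this representation to $\hat{\mathbb{E}}^g_{s,t}[Y_t]$ together with the $\hat{\mathbb{E}}^g$-supermartingale property shows that, for every such $Q$, the classical $g_Q$-solution on $[s,t]$ with terminal value $Y_t$ lies below $Y_s$; that is, $Y$ is a $g_Q$-supermartingale in the weak, hence (by \cite{CP}) in the strong, sense. The classical penalization estimate, Lemma \ref{the2.11} in the form of Remark \ref{the2.12}, then yields $\tilde y_t^{n,Q}\le Y_t$ for the classical penalized solution under each $Q$; taking the essential supremum over $Q$ and then the supremum over $P$ gives $y^n\le Y$ q.s.

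Monotonicity in $n$ follows from the comparison theorem, Theorem \ref{the2.8}: writing $g^n(s,y_s^n,z_s^n)=g^{n+1}(s,y_s^n,z_s^n)-(Y_s-y_s^n)$, the process $y^n$ solves the $g^{n+1}$-equation with the added nonincreasing term $-\int_0^\cdot(Y_s-y_s^n)\,ds$, and since $Y-y^n\ge0$ the comparison theorem gives $y^n\le y^{n+1}$; the same device, comparing with the plain $g$-solution $\hat y:=\hat{\mathbb{E}}^g_{\cdot,T}[Y_T]\le Y$, gives $\hat y\le y^n$. Hence $\hat y\le y^n\uparrow\bar Y\le Y$ and $y^n$ is bounded in $S_G^2(0,T)$ uniformly in $n$. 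In forward form,
\[
y_t^n=y_0^n-\int_0^t g(s,y_s^n,z_s^n)\,ds+\int_0^t z_s^n\,dB_s-A_t^n,\qquad A_t^n:=L_t^n-K_t^n,
\]
with $A^n$ increasing since $L^n$ is increasing and $K^n$ is a decreasing $G$-martingale. Granting the uniform bound $\sup_n\|A_T^n\|_{L_G^2}<\infty$ and the convergences $y^n\to Y$, $z^n\to Z$ described below, one lets $n\to\infty$: the generator term converges by (H2) and the stochastic integral by the Burkholder--Davis--Gundy inequality (Proposition \ref{the1.3}), so $A_t^n\to A_t$ for a continuous increasing limit $A$ with $A_T\in L_G^2(\Omega_T)$, yielding \eqref{eq4}.

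The technical heart, and the main obstacle, is to establish, uniformly in $n$, the bound $\|A_T^n\|_{L_G^2}\le C$ and then the uniform convergence $\hat{\mathbb{E}}[\sup_{t\in[0,T]}|Y_t-y_t^n|^2]\to0$ (whence $z^n\to Z$ in $H_G^2(0,T)$ through the $G$-BSDE estimates). Because $0\le L_T^n\le A_T^n$ and $0\le -K_T^n\le A_T^n$, a uniform bound on $A_T^n$ controls $L_T^n$ and $K_T^n$ simultaneously and gives $\hat{\mathbb{E}}[\int_0^T(Y_s-y_s^n)\,ds]=n^{-1}\hat{\mathbb{E}}[L_T^n]\to0$; the delicate part is to promote this integrated decay to uniform decay of $Y-y^n$. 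In the classical theory this rests on monotone convergence and on weak compactness of bounded sequences, both of which fail here: Fatou's lemma is not available under the sublinear expectation $\hat{\mathbb{E}}$, and bounded subsets of $M_G^\beta(0,T)$ need not be weakly compact. Overcoming this is exactly the purpose of the new estimates in Lemmas \ref{the3.7} and \ref{the3.8}, which combine the uniform continuity of the paths of elements of $S_G^\beta(0,T)$ with the comparison of the norms $\|\cdot\|_{L_G^p}$ and $\|\cdot\|_{p,\mathcal{E}}$ from Theorem \ref{the2.4}; I expect this oscillation control to be by far the hardest point, with the remainder of the limiting procedure being comparatively routine.
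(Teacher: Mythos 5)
Your outline follows exactly the paper's route: the penalized $G$-BSDEs \eqref{eq5}, the reduction of the comparison $y^n\le Y$ to the classical setting measure by measure via the representation Lemma \ref{the3.4}, \cite{CP} and Lemma \ref{the2.11} (this is precisely the paper's Lemma \ref{the3.5}), monotonicity via the comparison theorem, and the same uniqueness argument. Those parts are correct. But there is a genuine gap at what you yourself call the technical heart: you never prove (i) the uniform convergence $\lim_n\hat{\mathbb{E}}[\sup_{t\in[0,T]}|Y_t-y_t^n|^\alpha]=0$, nor (ii) the Cauchy property of $(z^n)$ in $M_G^2(0,T)$ and of $(A^n)$ in $S_G^2(0,T)$; you ``grant'' them and gesture at Lemmas \ref{the3.7} and \ref{the3.8}. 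Identifying the obstacle is not overcoming it. For (i) the missing idea is the mechanism of the paper's Lemma \ref{the3.8}: applying It\^{o}'s formula to $e^{-nt}y_t^n$ yields the explicit representation $y_t^n=e^{nt}\hat{\mathbb{E}}_t[e^{-nT}Y_T+\int_t^T ne^{-ns}Y_s ds+\int_t^T e^{-ns}g(s,y_s^n,z_s^n)ds]$, whence $0\le Y_t-y_t^n\le\hat{\mathbb{E}}_t[\tilde Y_t^n-\int_t^T e^{n(t-s)}g(s,y_s^n,z_s^n)ds]$ with $\tilde Y_t^n$ an oscillation functional of $Y$; one then splits the kernel $ne^{n(t-s)}$ at scale $\varepsilon$, controls the small-scale part by the modulus-of-continuity Lemma \ref{the3.7}, and---crucially---converts the resulting family of \emph{conditional} estimates into a bound on $\hat{\mathbb{E}}[\sup_t\hat{\mathbb{E}}_t[\cdots]]$ via the norm comparison of Theorem \ref{the2.4}, since (as you note) no Fatou-type or weak-compactness argument is available in this framework. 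None of this machinery appears in your text.

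For (ii), the assertion that $z^n\to Z$ follows ``through the $G$-BSDE estimates'' would fail as stated: standard a priori/stability estimates applied to $y^n-y^m$ involve the Lipschitz constants of the penalized generators, which are $L+n$ and blow up with $n$, so the resulting constants are useless. The paper's Lemma \ref{lem-the3.3} instead applies It\^{o}'s formula to $|y_t^n-y_t^m|^2$ and exploits a sign cancellation specific to the penalization: with $\hat y=y^n-y^m$ and $\hat L=L^n-L^m$, one has $\int_t^T\hat y_s\,d\hat L_s\le\int_t^T(m+n)(Y_s-y_s^m)(Y_s-y_s^n)ds\le\sup_s|Y_s-y_s^n|\,|L_T^m|+\sup_s|Y_s-y_s^m|\,|L_T^n|$, and this tends to $0$ only after combining the uniform bounds of Lemma \ref{the3.6} with the uniform convergence (i). So (ii) rests on (i) through this specific estimate, not on generic stability, and (i) is exactly the step you left unproved. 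As it stands, your proposal is a faithful outline of the paper's proof, but not a proof.
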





We divide the proof into a sequence of lemmas. For $P\in\mathcal{P}_M$, $\mathbb{F}$-stopping time $\tau$, and $\mathcal{F}_{\tau}$-measurable random variable $\eta\in L^2(P)$, let $(Y^P,Z^P)$ denote the solution to the following standard BSDE:
\begin{displaymath}
Y^P_s=\eta+\int_s^\tau g(r,Y^P_r,Z^P_r)dr -\int_s^\tau Z^P_r dB_r,\quad 0\leq s\leq \tau, \quad P\textrm{-}a.s..
\end{displaymath}

We recall from \cite{STZ12} that every $P\in\mathcal{P}_M$ satisfies the martingale representation property. Then there exists a unique adapted solution $(Y^P,Z^P)$ of the above equation. We define $E^{g,P}_{t,\tau}[\eta]:=Y_t^P$. For $P\in\mathcal{P}_M$ and $t\in [0,T]$, set $\mathcal{P}(t,P):=\{Q\in\mathcal{P}_M\big| Q|_{\mathcal{F}_t}=P|_{\mathcal{F}_t}\}$. The following lemma provides a representation for solution $Y^{T,\xi}$ of equation \eqref{eq3}.

\begin{lemma}[\cite{STZ12}]\label{the3.4}
For each $\xi\in L_G^\beta(\Omega_T)$ with $\beta>2$, we have, for $P\in\mathcal{P}_M$ and $t\in [0,T]$
\begin{displaymath}
\hat{\mathbb{E}}^g_{t,T}[\xi]={\esssup_{Q\in \mathcal{P}(t,P)}}^P E^{g,Q}_{t,T}[\xi], \textrm{ P-a.s.}.
\end{displaymath}
\end{lemma}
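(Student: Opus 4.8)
The plan is to prove the two inequalities
$\hat{\mathbb{E}}^g_{t,T}[\xi]\ge {\esssup_{Q\in\mathcal P(t,P)}}^{P} E^{g,Q}_{t,T}[\xi]$
and
$\hat{\mathbb{E}}^g_{t,T}[\xi]\le {\esssup_{Q\in\mathcal P(t,P)}}^{P} E^{g,Q}_{t,T}[\xi]$
separately. Throughout write $(Y,Z,K)=(Y^{T,\xi},Z^{T,\xi},K^{T,\xi})$ for the solution of the $G$-BSDE \eqref{eq3} and set $A:=-K$, a continuous nondecreasing process with $A_0=0$ and $A_T\in L_G^\alpha(\Omega_T)$ for every $\alpha<\beta$.

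For the ``$\ge$'' inequality, fix $Q\in\mathcal P(t,P)$. Since \eqref{eq3} holds quasi-surely it holds $Q$-a.s., and rewriting the decreasing term through $A$ shows that $(Y,Z)$ solves, $Q$-a.s.,
\[
Y_s=\xi+\int_s^T g(r,Y_r,Z_r)\,dr+(A_T-A_s)-\int_s^T Z_r\,dB_r,\qquad t\le s\le T,
\]
i.e. $Y$ is a classical $g$-supersolution under $Q$ with nondecreasing driving process $A$ (note $A_T\in L^2(Q)$ because $\beta>2$ forces $K_T\in L_G^2$). Comparing with the classical BSDE defining $E^{g,Q}_{\cdot,T}[\xi]=Y^Q$, whose driving increasing process is $0$, the classical comparison theorem for BSDEs yields $Y_s\ge Y^Q_s$ $Q$-a.s., in particular $Y_t\ge E^{g,Q}_{t,T}[\xi]$ $Q$-a.s. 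Both sides are $\mathcal F_t$-measurable and $Q|_{\mathcal F_t}=P|_{\mathcal F_t}$, so the inequality holds $P$-a.s.; taking the essential supremum over $Q\in\mathcal P(t,P)$ gives ``$\ge$''.

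For the ``$\le$'' inequality I would exploit that $K$ is a \emph{decreasing $G$-martingale}. Using the representation of the conditional $G$-expectation $\hat{\mathbb{E}}_t[\,\cdot\,]={\esssup_{Q\in\mathcal P(t,P)}}^{P}E_Q[\,\cdot\,|\mathcal F_t]$ (which is exactly the case $g=0$ of the present lemma, known from \cite{STZ12}) together with $\hat{\mathbb{E}}_t[K_T-K_t]=\hat{\mathbb{E}}_t[K_T]-K_t=0$, one obtains ${\esssup_{Q\in\mathcal P(t,P)}}^{P}E_Q[K_T-K_t\,|\,\mathcal F_t]=0$. As each term is nonpositive, the family is upward directed and its essential supremum is an increasing limit along a sequence $Q_n\in\mathcal P(t,P)$, whence $E_{Q_n}[A_T-A_t\,|\,\mathcal F_t]\to 0$ $P$-a.s. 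To upgrade this $L^1$-control to the $L^2$-control the BSDE estimate needs, I would interpolate: fixing $p\in(2,\beta)$ (here $\beta>2$ is used), the uniform bound $E_{Q_n}[(A_T-A_t)^p|\mathcal F_t]\le\hat{\mathbb{E}}_t[A_T^p]<\infty$ and conditional Hölder give
\[
E_{Q_n}[(A_T-A_t)^2|\mathcal F_t]\le\big(E_{Q_n}[A_T-A_t|\mathcal F_t]\big)^{\theta}\big(\hat{\mathbb{E}}_t[A_T^p]\big)^{1-\theta}\longrightarrow 0,\quad \theta=\tfrac{p-2}{p-1}.
\]
Finally, writing $\delta Y=Y-Y^{Q_n}$ and using the standard a priori estimate for the BSDE solved by $(\delta Y,\delta Z)$ under $Q_n$ (zero terminal value, Lipschitz generator, driving increasing process $A$), one gets $|Y_t-Y_t^{Q_n}|^2\le C\,E_{Q_n}[(A_T-A_t)^2\,|\,\mathcal F_t]\to 0$ with $C$ independent of $n$. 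Since $Y_t^{Q_n}\le{\esssup_{Q}}^{P}Y_t^Q\le Y_t$ by the first part, this forces ${\esssup_{Q\in\mathcal P(t,P)}}^{P}E^{g,Q}_{t,T}[\xi]=Y_t=\hat{\mathbb{E}}^g_{t,T}[\xi]$.

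The main obstacle is the representation/directedness step underlying the maximizing sequence $Q_n$: in the non-dominated setting neither the identity $\hat{\mathbb{E}}_t[\,\cdot\,]={\esssup_Q}^{P}E_Q[\,\cdot\,|\mathcal F_t]$ nor the upward-directedness of $\{E_Q[K_T-K_t|\mathcal F_t]\}$ is automatic, since weak compactness of bounded subsets of $M_G^\beta(0,T)$ and Fatou's lemma are unavailable; this is precisely where the aggregation and dynamic-programming results of \cite{STZ12} enter. By contrast, the remaining analytic ingredients---classical comparison and a priori estimates under each fixed $Q$, and the interpolation---are routine once $\beta>2$ secures the needed $p\in(2,\beta)$ moments of $A_T$.
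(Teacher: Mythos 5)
Your proposal is correct, and its first half (the inequality ``$\geq$'' via the classical comparison theorem under each $Q\in\mathcal{P}(t,P)$, transferred from $Q$-a.s.\ to $P$-a.s.\ through equality of the two measures on $\mathcal{F}_t$) is exactly the paper's argument. The second half, however, takes a genuinely different route. The paper invokes the representation of $\hat{\mathbb{E}}_t$ as an essential supremum over the \emph{weak closure} $\overline{\mathcal{P}(t,P)}$ (Theorem 16 of \cite{HP13}, Proposition 3.4 of \cite{STZ11}), produces a maximizer $Q\in\overline{\mathcal{P}(t,P)}$ with $E^Q[K^{T,\xi}_T-K^{T,\xi}_t]=0$, approximates it weakly by $Q_n\in\mathcal{P}(t,P)$ using Lemma 29 of \cite{DHP11}, interpolates to get $E^{Q_n}[|K^{T,\xi}_T-K^{T,\xi}_t|^{1+\alpha}]\rightarrow 0$ with $0<\alpha<1-1/\beta$, and concludes with the $L^{1+\alpha}$ a priori estimate of \cite{B}, so the convergence of $E^{g,Q_n}_{t,T}[\xi]$ to $\hat{\mathbb{E}}^g_{t,T}[\xi]$ is in $L^{1+\alpha}(P)$. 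You instead stay inside $\mathcal{P}(t,P)$, use the conditional representation together with upward directedness to extract a single sequence $Q_n$ with $E_{Q_n}[A_T-A_t\mid\mathcal{F}_t]\rightarrow 0$ $P$-a.s., upgrade this to conditional $L^2$ control by H\"{o}lder interpolation with some $p\in(2,\beta)$, and finish with a conditional $L^2$ BSDE stability estimate; the conclusion is $P$-a.s.\ convergence rather than $L^{1+\alpha}(P)$ convergence, and either suffices. Your route avoids the weak-compactness machinery (the closure, the existence of a maximizer there, and Lemma 29 of \cite{DHP11}), at the price of needing the directedness of $\{E_Q[K^{T,\xi}_T-K^{T,\xi}_t\mid\mathcal{F}_t]:Q\in\mathcal{P}(t,P)\}$; be aware that this comes from the pasting (concatenation) stability of $\mathcal{P}_M$ used in the dynamic-programming arguments of \cite{STZ11,STZ12}, which you correctly flag as the nontrivial input, and \emph{not} from the nonpositivity of the terms, as your phrasing suggests. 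One further contrast: your interpolation genuinely uses $\beta>2$ (to obtain $p>2$), whereas the paper's estimates run at exponent $1+\alpha<2-1/\beta$ and would survive with only $\beta>1$ in this step; under the lemma's standing hypothesis $\beta>2$ this costs nothing.
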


For reader's convenience, we give a brief proof here.

\begin{proof}
By the comparison theorem of classical BSDE, for $Q\in\mathcal{P}(t,P)$, we have $E^{g,Q}_{t,T}[\xi]\leq \hat{\mathbb{E}}^g_{t,T}[\xi]$, $Q$-$a.s.$. Consequently, we have $E^{g,Q}_{t,T}[\xi]\leq \hat{\mathbb{E}}^g_{t,T}[\xi]$, $P$-$a.s.$. Besides, by Theorem 16 in \cite{HP13} (see also Proposition 3.4 in \cite{STZ11}) and noting that $(K^{T,\xi}_t)$ is a decreasing $G$-martingale, we have \[0=\mathbb{\hat{E}}_t[K^{T,\xi}_T-K^{T,\xi}_t]=\underset{Q\in\overline{\mathcal{P}(t,P)}}{\esssup}^pE^Q_t[K_T^{T,\xi}-K_t^{T,\xi}],\ \  P\textrm{-}a.s.,\]
where $\overline{\mathcal{P}(t,P)}$ is the closure of $\mathcal{P}(t,P)$ with respect to the weak topology. Then there exists $Q\in \overline{\mathcal{P}(t,P)}$, such that $E^Q[K^{T,\xi}_T-K^{T,\xi}_t]=0$. Choose $\{Q_n\}\subset \mathcal{P}(t,P)$ such that $Q_n\rightarrow Q$ weakly, by Lemma 29 in \cite{DHP11}, then we obtain
\begin{displaymath}
E^{Q_n}[|K^{T,\xi}_T-K^{T,\xi}_t|^{1+\alpha}]\leq \{E^{Q_n}[|K^{T,\xi}_T-K^{T,\xi}_t|^{\frac{1}{1-\alpha}}]\}^{1-\alpha}\{E^{Q_n}[|K^{T,\xi}_T-K^{T,\xi}_t|]\}^\alpha\rightarrow0,
\end{displaymath}
where $0<\alpha<1-\frac{1}{\beta}$. By Proposition 3.2 in \cite{B}, we derive that
\begin{displaymath}
|\hat{\mathbb{E}}^g_{t,T}[\xi]-E^{g,Q_n}_{t,T}[\xi]|^{1+\alpha}\leq C_\alpha E_t^{Q_n}[|K^{T,\xi}_T-K^{T,\xi}_t|^{1+\alpha}], Q_n\textrm{-}a.s..
\end{displaymath}
Consequently, the above inequality holds $P$-$a.s.$. Then we have
\begin{displaymath}
E^P[|\hat{\mathbb{E}}^g_{t,T}[\xi]-E^{g,Q_n}_{t,T}[\xi]|^{1+\alpha}]\leq C_\alpha E^{Q_n}[|K^{T,\xi}_T-K^{T,\xi}_t|^{1+\alpha}]\rightarrow 0.
\end{displaymath}
The proof is complete.
\end{proof}

\begin{lemma}\label{the3.5}
Let $Y=(Y_t)_{t\in[0,T]}\in S_G^\beta(0,T)$ be an  $\mathbb{\hat{E}}^{g}$-supermartingale with $\beta>2$. 
Suppose that $g$ satisfies (H1') and (H2).  For each   $n=1,2,\cdots$,  consider the following $G$-BSDEs:
\begin{equation}\label{eq5}
y_t^n=Y_T+\int_t^T g(s,y_s^n,z_s^n)ds+n\int_t^T(Y_s-y_s^n)ds-\int_t^T z_s^ndB_s-(K_T^n-K_t^n).
\end{equation}
Then,  for $n=1,2,\cdots$, $Y_t\geq y_t^n$, $q.s.$.
\end{lemma}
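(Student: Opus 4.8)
The plan is to prove $Y_t \geq y_t^n$ quasi-surely by reducing the $G$-expectation statement to a family of classical probabilistic statements, one for each $P \in \mathcal{P}_M$, and then invoking the representation of Remark \ref{rem1.1}. The key observation is that it suffices to show $Y_t \geq y_t^n$, $P$-a.s., for every $P \in \mathcal{P}_M$; since $Y_t - y_t^n$ is (quasi-continuous and) in a suitable $L_G$-space, and $\mathcal{P}_M$ represents $\hat{\mathbb{E}}$, establishing nonnegativity $P$-a.s. for all $P$ yields the quasi-sure inequality. This is exactly the device advertised in the introduction: the $G$-framework lacks a Doob optional sampling theorem, so instead of working directly under $\hat{\mathbb{E}}$ I would fix a dominating measure and work with the classical penalized BSDE theory of Peng, recalled in Lemma \ref{the2.11}.

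First I would fix $P \in \mathcal{P}_M$, so that $B$ becomes a (time-changed) martingale $M_t = \int_0^t h_s\,dW_s$ with $h \in L^2_{\mathbb{F}}([0,T];[\underline{\sigma},\overline{\sigma}])$ under a reference $P^0$. Under this $P$ the $G$-BSDE \eqref{eq5} becomes a \emph{classical} BSDE driven by $M$, because the decreasing $G$-martingale $K^n$ is a decreasing process $P$-a.s. and can be absorbed, giving a $g_M$-supersolution in the sense of Remark \ref{the2.12}. Next I would verify that, under this fixed $P$, the process $(Y_t)$ restricted to the probability space is a $g_M$-supermartingale in the weak (hence, by the optional-stopping result of \cite{CP}, strong) sense: this is where the defining inequality $\hat{\mathbb{E}}^g_{s,t}[Y_t] \leq Y_s$ must be transferred. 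Using Lemma \ref{the3.4}, we have $\hat{\mathbb{E}}^g_{s,t}[Y_t] = \esssup^P_{Q \in \mathcal{P}(s,P)} E^{g,Q}_{s,t}[Y_t] \geq E^{g,P}_{s,t}[Y_t]$, $P$-a.s., so the $G$-supermartingale property forces $E^{g,P}_{s,t}[Y_t] \leq Y_s$, $P$-a.s., which is precisely the statement that $(Y_t)$ is a $g_M$-supermartingale under $P$.

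Having placed $(Y_t)$ as a right-continuous $g_M$-supermartingale under $P$ with the required integrability (inherited from $Y \in S_G^\beta(0,T)$ with $\beta > 2$, which gives $E^P[\sup_t |Y_t|^2] < \infty$), I would apply the classical penalization Lemma \ref{the2.11} in its $g_M$-version (Remark \ref{the2.12}). That lemma states exactly that the solution $\tilde{y}^n$ of the penalized classical BSDE driven by $M$ satisfies $Y_t \geq \tilde{y}^n_t$, $P$-a.s. It then remains to identify $\tilde{y}^n$ with $y^n$ under $P$: both solve the same penalized BSDE driven by $M$ on the fixed probability space (after absorbing $K^n$ into the driver as a decreasing process), so by uniqueness of classical BSDE solutions they coincide $P$-a.s. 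Hence $Y_t \geq y_t^n$, $P$-a.s., and since $P \in \mathcal{P}_M$ was arbitrary, the inequality holds quasi-surely.

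The main obstacle I anticipate is the transfer step: confirming that the $G$-BSDE \eqref{eq5} genuinely restricts, under each fixed $P$, to the classical penalized BSDE of Lemma \ref{the2.11}, with $K^n$ correctly behaving as a \emph{decreasing} process $P$-a.s. so that $-(K^n_T - K^n_t)$ acts like the increasing-process term $(A_\tau - A_{t\wedge\tau})$ there (up to sign conventions). One must check that the solution triple $(y^n, z^n, K^n) \in \mathfrak{S}_G^\alpha(0,T)$ projects to a valid classical solution with the right integrability under $P$, and that the essential-supremum representation in Lemma \ref{the3.4} legitimately yields the pointwise $P$-a.s. comparison $E^{g,P}_{s,t}[Y_t] \leq Y_s$ without measurability pathologies. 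The delicate point, flagged in the introduction, is that uniform estimates valid under $\hat{\mathbb{E}}$ do not transfer verbatim to each $P$; here, however, we only need the $P$-a.s. comparison for each fixed $P$, which sidesteps the absence of Doob's optional sampling under $\hat{\mathbb{E}}$ and is the crux of why the argument goes through.
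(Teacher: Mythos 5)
Your overall strategy --- localizing to each $P\in\mathcal{P}_M$, transferring the supermartingale property via Lemma \ref{the3.4} so that $Y$ becomes a classical $g$-supermartingale under $P$ (weak sense, upgraded to strong sense by continuity and \cite{CP}), and then invoking the classical penalization Lemma \ref{the2.11} --- is exactly the first half of the paper's argument. However, your final identification step contains a genuine gap. You claim that under a fixed $P$ the $G$-BSDE \eqref{eq5} ``restricts'' to the classical penalized BSDE, so that $y^n$ and the classical penalized solution $\tilde{y}^n$ ``solve the same penalized BSDE driven by $M$'' and hence coincide $P$-a.s.\ by uniqueness. This is false, and your own description of the obstacle shows why: under $P$, the decreasing $G$-martingale $K^n$ is merely a decreasing process (it is a martingale only under suitably ``maximizing'' measures), so $-(K^n_T-K^n_t)$ enters the equation as a genuine nontrivial increasing-process term. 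Therefore $y^n$ is a $g_M$-\emph{supersolution} under $P$, not the $g_M$-solution, and uniqueness does not apply. Worse, the classical comparison theorem then gives $y^n_t\geq \tilde{y}^n_t$, $P$-a.s. --- the inequality points the wrong way, so combining it with $Y_t\geq\tilde{y}^n_t$ from Lemma \ref{the2.11} yields nothing about $Y_t$ versus $y^n_t$.

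The paper closes this gap by applying the representation of Lemma \ref{the3.4} a second time, now to the penalized equation itself with driver $g_n(s,y,z)=g(s,y,z)+n(Y_s-y)$: for the fixed $P^*$,
\begin{equation*}
y^n_t={\esssup_{Q\in\mathcal{P}(t,P^*)}}^{P^*}\,E^{g_n,Q}_{t,T}[Y_T],\qquad P^*\text{-a.s.},
\end{equation*}
i.e.\ the $G$-BSDE solution seen under $P^*$ is not the classical solution under $P^*$ but the essential supremum of the classical solutions over all $Q$ agreeing with $P^*$ on $\mathcal{F}_t$. This forces one to prove the bound $Y_t\geq E^{g_n,Q}_{t,T}[Y_T]$, $Q$-a.s., for \emph{every} $Q\in\mathcal{P}(t,P^*)$, not merely for $Q=P^*$; this is available because each such $Q$ lies in $\mathcal{P}_M$, so your transfer argument makes $Y$ a classical $g$-supermartingale under $Q$ as well, and Lemma \ref{the2.11} applies under $Q$. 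Since the relevant random variables are $\mathcal{F}_t$-measurable and $Q=P^*$ on $\mathcal{F}_t$, each bound holds $P^*$-a.s., and taking the essential supremum over $Q$ gives $Y_t\geq y^n_t$, $P^*$-a.s. Your proposal is missing precisely this second use of Lemma \ref{the3.4}, which is what lets one sidestep the false identification of $y^n$ with a single classical solution.
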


\begin{proof}
Suppose the lemma were false. Then we could find some $t\in[0,T]$ and $P^*\in\mathcal{P}_M$ such that $P^*(y_t^n>Y_t)>0$. 


Applying Lemma \ref{the3.4} and the definition of  $\mathbb{\hat{E}}^{g}$-supermartingales, we have for any $P\in\mathcal{P}_M$ and $s\leq t$,
\begin{displaymath}
E^{g,P}_{s,t}[Y_t]\leq {\esssup_{P'\in\mathcal{P}(s, P)}}^P E^{g,P'}_{s,t}[Y_t]=\hat{\mathbb{E}}^g_{s,t}[Y_t]\leq Y_s, P\textrm{-}a.s..
\end{displaymath}
This shows that, under the measure $P\in\mathcal{P}_M$, $(Y_t)$ can be seen as an $g_B$-supermartingale in weak sense (see Remark \ref{the2.12}). Since $(Y_t)\in S_G^\beta(0,T)$ is continuous, it is an $g_B$-supermartingale in strong sense. For any $Q\in \mathcal{P}(t,P^*)$, let $(\bar{Y}^Q,\bar{Z}^Q)$ denote the solution to the following standard BSDE:
\begin{displaymath}
\bar{Y}^Q_s=Y_T+\int_s^T g_n(r,\bar{Y}^Q_r,\bar{Z}^Q_r)dr -\int_s^T \bar{Z}^Q_r dB_r, \quad Q\textrm{-}a.s..
\end{displaymath}
where $g_n(s,y,z)=f(s,y,z)+n(Y_s-y)$. Since $(Y_t)$ is an $g_B$-supermartingale and $g$ satisfies the assumptions in Lemma \ref{the2.11}, then it is easy to check that $Y_t\geq E^{g_n,Q}_{t,T}[Y_T](=\bar{Y}^Q_t)$, $Q$-$a.s.$. By the definition of $\mathcal{P}(t,P^*)$, we obtain that $Y_t\geq E^{g_n,Q}_{t,T}[Y_T]$, $P^*$-$a.s.$. Again by Lemma \ref{the3.4}, we have  $\esssup_{Q\in\mathcal{P}(t, P^*)}^{P^*} E^{g_n,Q}_{t,T}[Y_T]=y^n_t$, $P^*$-$a.s.$. This leads to a contradiction.
\end{proof}

It follows from the comparison theorem that $y_t^n\leq y_t^{n+1}$. Thus for all $n=1,2,\cdots$, $|y_t^n|$ is dominated by $|y_t^1|\vee|Y_t|$. Then we can find a constant $C$ independent of $n$, such that for $1<\alpha<\beta$, and for all $n=1,2,\cdots$,
\begin{equation}\label{eq6}
\hat{\mathbb{E}}[\sup_{t\in[0,T]}|y_t^n|^\alpha]\leq \hat{\mathbb{E}}[\sup_{t\in[0,T]}(|y_t^1|\vee|Y_t|)^{\alpha}]\leq C.
\end{equation}

Now let $L_t^n=n\int_0^t (Y_s-y_s^n)ds$, then $(L_t^n)_{t\in[0,T]}$ is an increasing process. We can rewrite $G$-BSDE \eqref{eq5} as
\begin{displaymath}
y_t^n=Y_T-\int_t^T z_s^ndB_s+\int_t^T g(s,y_s^n,z_s^n) ds-(K_T^n-K_t^n)+(L_T^n-L_t^n).
\end{displaymath}

\begin{lemma}\label{the3.6}
There exists a constant $C$ independent of $n$, such that for $1<\alpha<\beta$,
\begin{displaymath}
\hat{\mathbb{E}}[(\int_0^T (z_t^n)^2 dt)^{\frac{\alpha}{2}}]\leq C,\ \hat{\mathbb{E}}[|K_T^n|^\alpha]\leq C, \ \hat{\mathbb{E}}[|L_T^n|^\alpha]=n^\alpha\hat{\mathbb{E}}[(\int_0^T |Y_s-y_s^n|ds)^\alpha]\leq C.
\end{displaymath}
\end{lemma}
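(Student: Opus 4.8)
The plan is to establish the estimate on $z^n$ first, and then to deduce the bounds on $K^n$ and $L^n$ simultaneously. The crucial structural observation is that, since $K^n$ is a decreasing $G$-martingale with $K_0^n=0$ while $L^n$ is increasing with $L_0^n=0$, we have $K_T^n\leq 0\leq L_T^n$, whence $|K_T^n|+|L_T^n|=|K_T^n-L_T^n|$. Evaluating the penalized equation \eqref{eq5} at $t=0$ gives
\[
K_T^n-L_T^n=Y_T-y_0^n+\int_0^T g(s,y_s^n,z_s^n)\,ds-\int_0^T z_s^n\,dB_s,
\]
an expression in which the penalization parameter $n$ no longer appears explicitly. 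This is exactly what will render the bounds uniform in $n$: the large Lipschitz constant $n$ that would spoil a direct application of the a priori estimates for $G$-BSDEs is hidden inside the single quantity $L_T^n$, which I shall bound as a whole rather than term by term.

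First I would apply the $G$-It\^o formula to $|y_t^n|^2$ along \eqref{eq5}. Writing $dL_s^n=n(Y_s-y_s^n)\,ds$ and using $d\langle y^n\rangle_s=(z_s^n)^2\,d\langle B\rangle_s$, this yields
\[
\int_0^T (z_s^n)^2\,d\langle B\rangle_s=|Y_T|^2-|y_0^n|^2+2\int_0^T y_s^n g(s,y_s^n,z_s^n)\,ds+2\int_0^T y_s^n\,dL_s^n-2\int_0^T y_s^n\,dK_s^n-2\int_0^T y_s^n z_s^n\,dB_s.
\]
By the non-degeneracy $d\langle B\rangle_s\geq\underline{\sigma}^2\,ds$ the left side dominates $\underline{\sigma}^2\int_0^T(z_s^n)^2\,ds$. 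Since $L^n$ and $K^n$ are monotone, the two finite-variation integrals are controlled by $\sup_s|y_s^n|\,(L_T^n+|K_T^n|)=\sup_s|y_s^n|\,|K_T^n-L_T^n|$; substituting the displayed formula for $K_T^n-L_T^n$ and estimating $|g|\leq|g(s,0,0)|+L(|y_s^n|+|z_s^n|)$ via (H2), together with (H1'), bounds the right-hand side by nonnegative powers of $\sup_s|y_s^n|$, $|Y_T|$, $\int_0^T|g(s,0,0)|\,ds$, the stochastic integral $|\int_0^T z_s^n\,dB_s|$, and the quantity $(\int_0^T(z_s^n)^2\,ds)^{1/2}$.

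The main work is then the absorption step. Raising the resulting inequality to the power $\alpha/2$ and taking $\hat{\mathbb{E}}$, I would apply Young's inequality to each cross term --- in particular $\sup_s|y_s^n|\,(\int_0^T(z_s^n)^2\,ds)^{1/2}$ and $\sup_s|y_s^n|\,|\int_0^T z_s^n\,dB_s|$ --- so as to split off a small multiple of $\hat{\mathbb{E}}[(\int_0^T(z_s^n)^2\,ds)^{\alpha/2}]$ that is absorbed into the left-hand side, while the remaining $\sup_s|y_s^n|$ terms are controlled uniformly in $n$ by \eqref{eq6} and the stochastic integral is handled by the Burkholder--Davis--Gundy inequality of Proposition \ref{the1.3} (note $z^n\in H_G^\alpha$). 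This gives $\hat{\mathbb{E}}[(\int_0^T(z_s^n)^2\,ds)^{\alpha/2}]\leq C$ with $C$ independent of $n$. Feeding this back into the formula for $K_T^n-L_T^n$ and using Proposition \ref{the1.3} once more yields $\hat{\mathbb{E}}[|K_T^n-L_T^n|^\alpha]\leq C$, and since $|K_T^n|+|L_T^n|=|K_T^n-L_T^n|$ the desired bounds on $\hat{\mathbb{E}}[|K_T^n|^\alpha]$ and $\hat{\mathbb{E}}[|L_T^n|^\alpha]$ follow at once.

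I expect the delicate point to be precisely this absorption argument when $\alpha$ may exceed $2$: one must choose the exponents in Young's inequality and in Proposition \ref{the1.3} consistently so that only a controllable fraction of $\hat{\mathbb{E}}[(\int_0^T(z_s^n)^2\,ds)^{\alpha/2}]$ is produced on the right, and one must keep every constant free of $n$ throughout --- which is exactly where the cancellation of $n$ in $K_T^n-L_T^n$ is indispensable.
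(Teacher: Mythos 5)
Your proposal is correct and follows essentially the same route as the paper: the paper's proof simply invokes the a priori estimates of Proposition 3.5 in \cite{HJPS} applied to the equation rewritten with finite-variation part $L^n-K^n$, and those estimates are proved exactly by the argument you spell out (It\^{o}'s formula applied to $|y^n_t|^2$, Burkholder--Davis--Gundy via Proposition \ref{the1.3}, Young absorption), with the same two key observations --- that folding the penalization into $L^n$ leaves a generator whose Lipschitz constant is free of $n$, and that the opposite signs of $K_T^n$ and $L_T^n$ let the joint bound on $|L_T^n-K_T^n|$ split into the separate bounds. The only difference is presentational: the paper cites the known estimate, while you unfold its proof.
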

\begin{proof}
By a similar analysis as Proposition 3.5 in \cite{HJPS}, we have
\begin{align*}
\hat{\mathbb{E}}[(\int_0^T |z_s^n|^2ds)^{\frac{\alpha}{2}}]&\leq C_{\alpha}\{\hat{\mathbb{E}}[\sup_{t\in[0,T]}|y_t^n|^\alpha]
+(\hat{\mathbb{E}}[\sup_{t\in[0,T]}|y_t^n|^\alpha])^{\frac{1}{2}}(\hat{\mathbb{E}}[(\int_0^T|g(s,0,0)|ds)^\alpha])^{\frac{1}{2}}\},\\
\hat{\mathbb{E}}[|L_T^n-K_T^n|^\alpha]&\leq C_{\alpha}\{\hat{\mathbb{E}}[\sup_{t\in[0,T]}|y_t^n|^\alpha]+\hat{\mathbb{E}}[(\int_0^T|g(s,0,0)|ds)^\alpha]\},
\end{align*}
where the constant $C_\alpha$ depends on $\alpha,T,G$ and $L$. Thus we conclude that there exists a constant $C$ independent of $n$, such that for $1<\alpha<\beta$,
\begin{displaymath}
\hat{\mathbb{E}}[(\int_0^T |z_t^n|^2 dt)^{\frac{\alpha}{2}}]\leq C, \quad \hat{\mathbb{E}}[|L_T^n-K_T^n|^\alpha]\leq C.
\end{displaymath}
Since $L_T^n$ and $-K_T^n$ are nonnegative, we get
\begin{displaymath}
\hat{\mathbb{E}}[|K_T^n|^\alpha]\leq C, \quad \hat{\mathbb{E}}[|L_T^n|^\alpha]=n^\alpha\hat{\mathbb{E}}[(\int_0^T |Y_s-y_s^n|ds)^\alpha]\leq C.
\end{displaymath}
\end{proof}

For $1<\alpha<\beta$, we obtain the following inequality.
\begin{displaymath}
\begin{split}
&\quad n\hat{\mathbb{E}}[\int_0^T(Y_s-y_s^n)^{\alpha}ds]\\
&\leq Cn\hat{\mathbb{E}}[\int_0^T(Y_s-y^n_s)|Y_s|^{\alpha-1}ds]+Cn\hat{\mathbb{E}}[\int_0^T(Y_s-y_s^n)|y_s^n|^{\alpha-1}ds]\\
&\leq Cn(\hat{\mathbb{E}}[\sup_{s\in[0,T]}|Y_s|^{(\alpha-1)p}])^{1/p}(\hat{\mathbb{E}}[(\int_0^T(Y_s-y_s^n)ds)^q])^{1/q}\\
&+Cn(\hat{\mathbb{E}}[\sup_{s\in[0,T]}|y_s^n|^{(\alpha-1)p}])^{1/p}(\hat{\mathbb{E}}[(\int_0^T(Y_s-y_s^n)ds)^q])^{1/q},
\end{split}
\end{displaymath}
where $p,q>1$ satisfy $\frac{1}{p}+\frac{1}{q}=1$, $(\alpha-1)p<\beta$ and $q<\beta$. By estimate \eqref{eq6} and Lemma \ref{the3.6}, there exists a constant $C$ independent of $n$, such that
\begin{displaymath}
n\hat{\mathbb{E}}[\int_0^T(Y_s-y_s^n)^{\alpha}ds]\leq C.
\end{displaymath}
This implies that $y^n$ converges to $Y$ in $M_G^\alpha(0,T)$. In fact, this convergence holds in $S_G^\alpha(0,T)$. In order to prove this conclusion, we need the following uniformly continuous property for any $Y\in S_G^p(0,T)$ with $p>1$.

\begin{lemma}\label{the3.7}
For  $Y\in S_G^p(0,T)$ with $p>1$, we have, by setting $Y_s:=Y_T$ for $s>T$,
\begin{displaymath}
F(Y):=\limsup_{\varepsilon\rightarrow0}(\hat{\mathbb{E}}[\sup_{t\in[0,T]}\sup_{s\in[t,t+\varepsilon]}|Y_t-Y_s|^p])^\frac{1}{p}=0.
\end{displaymath}
\end{lemma}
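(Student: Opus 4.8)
The plan is to regard $F$ as a functional on $S_G^p(0,T)$ that is Lipschitz with respect to the norm $\|\cdot\|_{S_G^p}$ and that vanishes on the dense subspace $S_G^0(0,T)$; density then forces $F\equiv 0$ on all of $S_G^p(0,T)$. Throughout write $\omega_\varepsilon(X):=\sup_{t\in[0,T]}\sup_{s\in[t,t+\varepsilon]}|X_t-X_s|$, with the convention $X_s:=X_T$ for $s>T$, so that $F(X)=\limsup_{\varepsilon\to0}\|\omega_\varepsilon(X)\|_{L_G^p}$. Since $\omega_\varepsilon(X)$ is nondecreasing in $\varepsilon$, the $\limsup$ is in fact a monotone limit and may be computed along any sequence $\varepsilon_k\downarrow 0$.

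The first step is a stability estimate. For arbitrary $X$, the triangle inequality for the supremum gives the pointwise bound $\omega_\varepsilon(Y)\leq\omega_\varepsilon(X)+\omega_\varepsilon(Y-X)$, together with the crude bound $\omega_\varepsilon(Y-X)\leq 2\sup_{u}|(Y-X)_u|$. Using that $\|\cdot\|_{L_G^p}$ satisfies the Minkowski inequality under the sublinear expectation $\hat{\mathbb{E}}$, I obtain $\|\omega_\varepsilon(Y)\|_{L_G^p}\leq\|\omega_\varepsilon(X)\|_{L_G^p}+2\|Y-X\|_{S_G^p}$, and passing to the $\limsup$ yields $F(Y)\leq F(X)+2\|Y-X\|_{S_G^p}$. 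Consequently it suffices to prove $F(X)=0$ for $X\in S_G^0(0,T)$ and then approximate a general $Y\in S_G^p(0,T)$ by simple processes.

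Next I would establish $F=0$ on simple processes. Take $X_t=h(t,B_{t_1\wedge t},\ldots,B_{t_n\wedge t})$ with $h\in C_{b,Lip}(\mathbb{R}^{n+1})$. Because each map $x\mapsto t_i\wedge x$ is $1$-Lipschitz, for $s\in[t,t+\varepsilon]$ every pair of arguments $B_{t_i\wedge t},B_{t_i\wedge s}$ differs by at most $M_\varepsilon:=\sup\{|B_u-B_v|:u,v\in[0,T],\,|u-v|\leq\varepsilon\}$, so the Lipschitz property of $h$ gives $\omega_\varepsilon(X)\leq C(\varepsilon+nM_\varepsilon)$ for a constant $C$ depending only on $h$. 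Hence $F(X)\leq Cn\,\limsup_{\varepsilon\to0}\|M_\varepsilon\|_{L_G^p}$, and everything reduces to showing $\|M_\varepsilon\|_{L_G^p}\to0$, i.e. the uniform continuity of $G$-Brownian motion in the $L_G^p$-sense.

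I expect this last point to be the main obstacle. Quasi-surely the paths of $B$ are uniformly continuous on $[0,T]$, so $M_\varepsilon\downarrow 0$ q.s., and $M_\varepsilon^p\leq(2\sup_{t\in[0,T]}|B_t|)^p=:M^p\in L_G^1(\Omega_T)$; however one cannot simply invoke dominated convergence or Fatou's lemma under $\hat{\mathbb{E}}$. My approach is to split, for fixed $N$,
$$\hat{\mathbb{E}}[M_\varepsilon^p]\leq\hat{\mathbb{E}}[M_\varepsilon^p\wedge N]+\hat{\mathbb{E}}[M^p\mathbf{1}_{\{M^p>N\}}].$$
The second term tends to $0$ as $N\to\infty$, uniformly in $\varepsilon$, since $M^p\in L_G^1$ is uniformly integrable in the $G$-sense. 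For fixed $N$ the first term tends to $0$ as $\varepsilon\to0$ by the regularity (continuity from above along bounded, continuous, decreasing sequences) of $\hat{\mathbb{E}}$: the functions $M_\varepsilon^p\wedge N$ are bounded and continuous on $\Omega_T$ and decrease to $0$ everywhere, so a Dini-type argument based on the weak compactness of the representing family $\mathcal{P}$ (cf. \cite{DHP11}) gives $\sup_{P\in\mathcal{P}}E_P[M_\varepsilon^p\wedge N]\to0$. Letting first $\varepsilon\to0$ and then $N\to\infty$ yields $\|M_\varepsilon\|_{L_G^p}\to0$, whence $F(X)=0$ on $S_G^0(0,T)$, and the stability estimate completes the proof for all $Y\in S_G^p(0,T)$.
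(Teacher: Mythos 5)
Your proposal is correct and follows essentially the same route as the paper's own proof: the paper likewise establishes the Lipschitz stability $|F(Y)-F(Y')|\leq C\|Y-Y'\|_{S_G^p}$ and concludes by density of $S_G^0(0,T)$ in $S_G^p(0,T)$. The only difference is that the paper dismisses the base case $F(X)=0$ for $X\in S_G^0(0,T)$ as obvious, whereas you supply the missing detail --- reduction to the $L_G^p$-modulus of continuity of $G$-Brownian motion, handled by truncation, the uniform integrability of elements of $L_G^1(\Omega_T)$, and a Dini-type argument using the tightness of the representing family $\mathcal{P}$ --- which is a correct and worthwhile elaboration of exactly the step the paper leaves implicit.
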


\begin{proof}
For  $Y\in S_G^0(0,T)$, the conclusion is obvious. Noting that for $Y, Y'\in S^p_G(0,T)$ we have
\[|F(Y)-F(Y')|\leq C\|Y-Y'\|_{S^p_G}, \] which implies that $F(Y)=0$ for any  $Y\in S_G^p(0,T)$.
\end{proof}


\begin{lemma}\label{the3.8}
For some $1<\alpha<\beta$, we have
\begin{displaymath}
\lim_{n\rightarrow\infty}\hat{\mathbb{E}}[\sup_{t\in[0,T]}|Y_t-y_t^n|^\alpha]=0.
\end{displaymath}
\end{lemma}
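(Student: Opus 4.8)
The plan is to build on the monotonicity already established: by Lemma~\ref{the3.5} and the comparison theorem $y^n_t\le y^{n+1}_t\le Y_t$ q.s., so $Y_t-y^n_t\ge0$ decreases in $n$, and we already know $y^n\to Y$ in $M_G^\alpha(0,T)$. What has to be upgraded is this to $\hat{\mathbb{E}}[\sup_{t}(Y_t-y^n_t)^\alpha]\to0$. The soft route—pathwise Dini plus dominated convergence—is not available: the increasing pathwise limit of $y^n$ is a priori only lower semicontinuous and agrees with $Y$ merely for a.e.\ $t$, and even with q.s.\ pointwise convergence in hand one cannot pass to $\hat{\mathbb{E}}$ since the reverse Fatou/dominated convergence step fails in the sublinear framework. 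I would instead freeze a partition and, on each sub-interval, compare $Y$ and $y^n$ with the $g$-solutions carrying the endpoint data, so that the delicate estimate becomes $n$-free. Throughout I would fix an exponent $\alpha\in(2,\beta)$, which is admissible because $\beta>2$ and is exactly what will make the mesh-sums below converge.

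Fix $0=t_0<\cdots<t_N=T$ of mesh $\delta$ and put, on $[t_i,t_{i+1}]$, $v^i_t:=\hat{\mathbb{E}}^g_{t,t_{i+1}}[Y_{t_{i+1}}]$ and $u^i_t:=\hat{\mathbb{E}}^g_{t,t_{i+1}}[y^n_{t_{i+1}}]$. The $\mathbb{\hat{E}}^g$-supermartingale property gives $v^i_t\le Y_t$, monotonicity of $\mathbb{\hat{E}}^g$ in the terminal datum gives $u^i_t\le v^i_t\le Y_t$, and since on $[t_i,t_{i+1}]$ the restriction of $y^n$ solves the $G$-BSDE with generator $g_n(s,y,z)=g(s,y,z)+n(Y_s-y)$ which satisfies $g_n(s,u^i_s,z)\ge g(s,u^i_s,z)$ (because $u^i\le Y$), Theorem~\ref{the2.8} yields $y^n_t\ge u^i_t$. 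Hence for $t\in[t_i,t_{i+1}]$, $0\le Y_t-y^n_t\le(Y_t-v^i_t)+(v^i_t-u^i_t)$, and therefore $\hat{\mathbb{E}}[\sup_t(Y_t-y^n_t)^\alpha]^{1/\alpha}\le A(\delta)+B(n,\delta)$, where $A(\delta):=\|\max_i\sup_{t\in[t_i,t_{i+1}]}(Y_t-v^i_t)\|_{L_G^\alpha}$ does not depend on $n$ and $B(n,\delta):=\|\max_i\sup_{t\in[t_i,t_{i+1}]}(v^i_t-u^i_t)\|_{L_G^\alpha}$.

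For $B$, the a priori stability estimate for two $G$-BSDEs with the same generator gives $\hat{\mathbb{E}}[\sup_t(v^i_t-u^i_t)^\alpha]\le C\hat{\mathbb{E}}[(Y_{t_{i+1}}-y^n_{t_{i+1}})^\alpha]$, so $B(n,\delta)^\alpha\le C\sum_i\hat{\mathbb{E}}[(Y_{t_{i+1}}-y^n_{t_{i+1}})^\alpha]$; for a fixed partition this is a finite sum and each grid-point term tends to $0$ as $n\to\infty$. Here I would first use that the $M_G^\alpha$-convergence forces $y^n_t\uparrow Y_t$ q.s.\ for a.e.\ $t$—so the partition points can be chosen among these good times—and then the continuity from above of $\hat{\mathbb{E}}$ on $L_G^\alpha$, the careful substitute for the forbidden Fatou step. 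For $A$, I would split $Y_t-v^i_t=(Y_t-Y_{t_{i+1}})+(Y_{t_{i+1}}-v^i_t)$: the first part is dominated by $\sup_t\sup_{s\in[t,t+\delta]}|Y_t-Y_s|$ and tends to $0$ with $\delta$ by Lemma~\ref{the3.7}, uniformly in $n$; the second is the displacement of the $g$-solution from its own terminal, which the $G$-BSDE a priori estimates bound by $C\hat{\mathbb{E}}[(\int_{t_i}^{t_{i+1}}|g(s,0,0)|\,ds)^\alpha]+C\delta^{\alpha/2}\hat{\mathbb{E}}[|Y_{t_{i+1}}|^\alpha]$ up to lower-order terms. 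Summing over $i$, using Jensen's inequality and the bound \eqref{eq6}, gives $A(\delta)\to0$ as $\delta\to0$, and this is precisely where $\alpha>2$ is used. Sending $\delta\to0$ to kill $A$ and then $n\to\infty$ to kill $B$ finishes the argument.

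The step I expect to be the main obstacle is the control of the $n$-free term $A(\delta)$, i.e.\ showing that the discretized envelope $\hat{\mathbb{E}}^g_{t,t_{i+1}}[Y_{t_{i+1}}]$ tends to $Y$ uniformly in $L_G^\alpha$ as the mesh shrinks. A direct attack that exposes the martingale part $\int z^n\,dB$ of $y^n$ through the forward form of \eqref{eq5} and then sums the Burkholder--Davis--Gundy bound interval by interval breaks down for $\alpha\le2$, since $\sum_i\hat{\mathbb{E}}[(\int_{t_i}^{t_{i+1}}|z^n_s|^2ds)^{\alpha/2}]$ need not be small as $\delta\to0$; comparing instead with the endpoint $g$-solutions is exactly the device that confines the martingale fluctuation to the $n$-independent quantity $A(\delta)$, where the clean $G$-BSDE a priori estimate absorbs the stochastic integral into the generator data and the mesh-sum converges once $\alpha>2$ (hence the role of the hypothesis $\beta>2$). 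The remaining subtlety—passing from a.e./pathwise convergence to $\hat{\mathbb{E}}$-convergence at the finitely many grid times—is handled by continuity from above of the $G$-expectation rather than by Fatou's lemma.
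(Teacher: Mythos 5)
Your reduction to $A(\delta)+B(n,\delta)$ is cleanly organized, and the comparison step $u^i_t\le y^n_t$ can indeed be justified from Theorem \ref{the2.8} (view $u^i$ as a solution with generator $g_n$ perturbed by the nonincreasing process $-\int_0^\cdot n(Y_s-u^i_s)\,ds$, legitimate because $u^i\le v^i\le Y$). The gap that breaks the proof is the step that is supposed to send $B(n,\delta)$ to zero: you need $\hat{\mathbb{E}}[(Y_{t_{i+1}}-y^n_{t_{i+1}})^\alpha]\to0$ at \emph{fixed deterministic} grid times, and you propose to extract this from the $M_G^\alpha$-convergence (``a.e.\ $t$ is a good time'') followed by continuity from above of $\hat{\mathbb{E}}$. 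That bridge does not exist in the sublinear setting. Subadditivity gives only $\hat{\mathbb{E}}[\int_0^T X_t\,dt]\le\int_0^T\hat{\mathbb{E}}[X_t]\,dt$, so the bound $\hat{\mathbb{E}}[\int_0^T(Y_t-y^n_t)^\alpha dt]\le C/n$ yields, for each fixed $P\in\mathcal{P}_M$, only a $P$-dependent Lebesgue-null set of times off which $E_P[(Y_t-y^n_t)^\alpha]\to0$; since $\mathcal{P}_M$ is uncountable, there need not exist a single deterministic $t$ (let alone a.e.\ $t$) at which $y^n_t\uparrow Y_t$ quasi-surely, and q.s.\ monotone convergence at the given time is precisely the hypothesis continuity from above requires. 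The failure is not a technicality: let $S:=\inf\{s:\langle B\rangle_s\ge\underline{\sigma}^2T/2\}$ and $X^n_t:=(1-n|t-S|)^+$. Then $X^n_t$ decreases in $n$ and $\int_0^TX^n_t\,dt\le 1/n$ for every path, so $\hat{\mathbb{E}}[\int_0^TX^n_t\,dt]\to0$; yet for every $t\in[\underline{\sigma}^2T/(2\overline{\sigma}^2),\,T/2]$ there is a constant-volatility measure in $\mathcal{P}_M$ under which $S=t$ a.s., whence $\hat{\mathbb{E}}[X^n_t]=1$ for all $n$. Time-integral smallness, even pathwise uniform and monotone, carries no information at any fixed time. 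This is exactly the Fatou-type obstruction the paper's introduction warns about; since convergence at a fixed time is essentially the content of the lemma, your scheme assumes what is to be proved at its crucial point.

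A second, independent problem sits in $A(\delta)$. The per-interval bound you claim, $\hat{\mathbb{E}}[\sup_t|v^i_t-Y_{t_{i+1}}|^\alpha]\le C\hat{\mathbb{E}}[(\int_{t_i}^{t_{i+1}}|g(s,0,0)|ds)^\alpha]+C\delta^{\alpha/2}\hat{\mathbb{E}}[|Y_{t_{i+1}}|^\alpha]$ up to lower order, is not what the a priori estimates deliver. Writing $v^i_t-Y_{t_{i+1}}=\int_t^{t_{i+1}}g\,ds-\int_t^{t_{i+1}}\zeta^i\,dB-(K^i_{t_{i+1}}-K^i_t)$, the $K$-term has a favorable sign, but the estimates of \cite{HJPS} control $\hat{\mathbb{E}}[(\int_{t_i}^{t_{i+1}}|\zeta^i_s|^2ds)^{\alpha/2}]$ only by quantities of the size $\hat{\mathbb{E}}[\sup_t|v^i_t|^\alpha]$, which are $O(1)$ per interval: the martingale part of $v^i$ measures how the terminal datum $Y_{t_{i+1}}$ is revealed across $[t_i,t_{i+1}]$, and for a generic element of $L_G^\beta(\Omega_{t_{i+1}})$ this is not quantitatively of order $\sqrt{\delta}$. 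Hence the sum of your per-interval bounds over the $T/\delta$ intervals need not vanish, whatever $\alpha>2$. It is instructive to see how the paper's own proof sidesteps both obstacles at once: It\^{o}'s formula applied to $e^{-nt}y^n_t$ (together with the fact that $\int_0^\cdot e^{-ns}dK^n_s$ is a $G$-martingale) gives $y^n_t=e^{nt}\hat{\mathbb{E}}_t[e^{-nT}Y_T+\int_t^Tne^{-ns}Y_s\,ds+\int_t^Te^{-ns}g(s,y^n_s,z^n_s)ds]$, whence $0\le Y_t-y^n_t\le\hat{\mathbb{E}}_t[\tilde{Y}^n_t-\int_t^Te^{n(t-s)}g\,ds]$, where $\tilde{Y}^n_t$ is built only from increments $Y_t-Y_s$ weighted by the probability kernel $ne^{n(t-s)}ds$. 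The increments are handled by the modulus-of-continuity Lemma \ref{the3.7}, the generator term is $O(n^{-1/2})$ by Lemma \ref{the3.6}, and Theorem \ref{the2.4} converts $\hat{\mathbb{E}}[\sup_t(\hat{\mathbb{E}}_t[\cdot])^\alpha]$ into $L_G$-norms; no fixed-time limit and no interval-by-interval bookkeeping ever appears. Any repair of your partition scheme would first have to supply grid-time convergence by some such device, which is the heart of the matter.
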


\begin{proof}
By applying $G$-It\^{o}'s formula to $e^{-nt}y_t^n$, we get
\begin{displaymath}
y_t^n=e^{nt}\hat{\mathbb{E}}_t[e^{-nT}Y_T+\int_t^T ne^{-ns}Y_s ds+\int_t^T e^{-ns}g(s,y_s^n,z_s^n)ds].
\end{displaymath}
Then we obtain
\begin{displaymath}
0\leq Y_t-y_t^n\leq \hat{\mathbb{E}}_t[\tilde{Y}_t^n-\int_t^T e^{n(t-s)}g(s,y_s^n,z_s^n)ds],
\end{displaymath}
where $\tilde{Y}_t^n=e^{n(t-T)}(Y_t-Y_T)+\int_t^T ne^{n(t-s)}(Y_t-Y_s )ds$. By H\"{o}lder's inequality, it follows that
\begin{align*}
|\int_t^T e^{n(t-s)}g(s,y_s^n,z_s^n)ds|
\leq &\frac{1}{\sqrt{2n}}(\int_0^T g^2(s,y_s^n,z_s^n)ds)^{1/2}\\
\leq &\frac{C}{\sqrt{n}}(\sup_{s\in[0,T]}|y_s^n|^2+\int_0^T (g^2(s,0,0)+|z_s^n|^2)ds)^{1/2}.
\end{align*}
Then for $1<\alpha<\beta$, we have
\begin{equation}\label{eq7}
\hat{\mathbb{E}}[\sup_{t\in[0,T]}|\int_t^T e^{n(t-s)}g(s,y_s^n,z_s^n)ds|^\alpha]\rightarrow 0, \textrm{ as } n\rightarrow\infty.
\end{equation}

For $\varepsilon>0$, it is simple to show that
\begin{align*}
|\tilde{Y}_t^n|&=|e^{n(t-T)}(Y_t-Y_T)+\int_{t+\varepsilon}^T ne^{n(t-s)}(Y_t-Y_s )ds+\int_t^{t+\varepsilon} ne^{n(t-s)}(Y_t-Y_s )ds|\\
&\leq e^{n(t-T)}|Y_t-Y_T|+e^{-n\varepsilon}\sup_{s\in[t+\varepsilon,T]}|Y_t-Y_s|+\sup_{s\in[t,t+\varepsilon]}|Y_s-Y_t|.
\end{align*}
For $T>\delta>0$, from the above inequality we obtain
\begin{align*}
\sup_{t\in[0,T-\delta]}|\tilde{Y}_t^n|&\leq e^{-n\delta}\sup_{t\in[0,T-\delta]}|Y_t-Y_T|+e^{-n\varepsilon}\sup_{t\in[0,T-\delta]}\sup_{s\in[t+\varepsilon,T]}|Y_t-Y_s|+\sup_{t\in[0,T-\delta]}\sup_{s\in[t,t+\varepsilon]}|Y_s-Y_t|\\
&\leq 2\sup_{t\in[0,T]}|Y_t|(e^{-n\varepsilon}+e^{-n\delta})+\sup_{t\in[0,T]}\sup_{s\in[t,t+\varepsilon]}|Y_s-Y_t|.
\end{align*}
It is easy to check that for each fixed $\varepsilon,\delta>0$,
\begin{equation}\begin{split}\label{eq9}
\hat{\mathbb{E}}[\sup_{t\in[0,T-\delta]}|\tilde{Y}_t^n|^\beta]&\leq C[(e^{-n\beta\varepsilon}+e^{-n\beta\delta})\hat{\mathbb{E}}[\sup_{t\in[0,T]}|Y_t|^\beta]+\hat{\mathbb{E}}[\sup_{t\in[0,T]}\sup_{s\in[t,t+\varepsilon]}|Y_s-Y_t|^\beta]]\\
&\rightarrow C\hat{\mathbb{E}}[\sup_{t\in[0,T]}\sup_{s\in[t,t+\varepsilon]}|Y_s-Y_t|^\beta], \textrm{ as } n\rightarrow \infty.
\end{split}\end{equation}

For $1<\alpha<\beta$ and $0<\delta<T$, noting that $y_t^1\leq y_t^n\leq Y_t$, $n=1,2,\cdots$, we have
\begin{equation}\begin{split}\label{eq10}
&\quad\hat{\mathbb{E}}[\sup_{t\in[0,T]}|Y_t-y_t^n|^\alpha]\\
&\leq \hat{\mathbb{E}}[\sup_{t\in[0,T-\delta]}|Y_t-y_t^n|^\alpha]+\hat{\mathbb{E}}[\sup_{t\in[T-\delta,T]}|Y_t-y_t^n|^\alpha]\\
&\leq \hat{\mathbb{E}}[\sup_{t\in[0,T-\delta]}\hat{\mathbb{E}}_t[|\tilde{Y}_t^n-\int_t^T e^{n(t-s)}g(s,y_s^n,z_s^n)ds|^\alpha]]+\hat{\mathbb{E}}[\sup_{t\in[T-\delta,T]}|Y_t-y_t^1|^\alpha]\\
&\leq \hat{\mathbb{E}}[\sup_{t\in[0,T-\delta]}\hat{\mathbb{E}}_t[\sup_{u\in[0,T-\delta]}|\tilde{Y}_u^n|^\alpha]]
+\hat{\mathbb{E}}[\sup_{t\in[T-\delta,T]}|Y_t-y_t^1|^\alpha]\\
&\quad+\hat{\mathbb{E}}[\sup_{t\in[0,T-\delta]}\hat{\mathbb{E}}_t[\sup_{u\in[0,T]}|\int_u^T e^{n(t-s)}g(s,y_s^n,z_s^n)ds|^\alpha]]=:I+II+III.
\end{split}\end{equation}
Theorem \ref{the2.4} and \eqref{eq7} yield that $III\rightarrow 0$, as $n\rightarrow\infty$. Note that $Y-y^1\in S^\alpha_G(0,T)$ and $Y_T-y^1_T=0$. By Lemma \ref{the3.7}, we obtain $II\rightarrow 0$, as $\delta\rightarrow 0$. Then by applying Theorem \ref{the2.4} agian and \eqref{eq9}, we derive that
\begin{displaymath}
I\leq C\{\hat{\mathbb{E}}[\sup_{t\in[0,T]}\sup_{s\in[t,t+\varepsilon]}|Y_s-Y_t|^\beta]+(\hat{\mathbb{E}}[\sup_{t\in[0,T]}\sup_{s\in[t,t+\varepsilon]}|Y_s-Y_t|^\beta])^{\alpha/\beta}\}, \textrm{ as }
n\rightarrow \infty.
\end{displaymath}
First let $n\rightarrow \infty$ and then send $\varepsilon$, $\delta\rightarrow 0$ in \eqref{eq10}. The above analysis proves that for $1<\alpha<\beta$,
\begin{displaymath}\lim_{n\rightarrow\infty}\hat{\mathbb{E}}[\sup_{t\in[0,T]}|Y_t-y_t^n|^\alpha]=0.
\end{displaymath}
\end{proof}


\begin{lemma}\label{lem-the3.3} The sequence  $\{y^n,z^n,A^n\}_{n=1}^\infty$ of the solutions of $G$-BSDE (\ref{eq5}) satisfies the following properties:
\begin{align}
&\lim_{m,n\rightarrow \infty}\hat{\mathbb{E}}[\sup_{t\in[0,T]}|y_t^n-y^m_t|^\alpha]=0,\,\,\,\,\,  \textrm{ for }\,\, 1<\alpha<\beta,  \label{eqnx10}\\
&\lim_{m,n\rightarrow \infty}\hat{\mathbb{E}}[\int_0^T |z^m_s-z^n_s|^2ds]=0,\,\,\,\,\, \lim_{m,n\rightarrow \infty} \hat{\mathbb{E}}[\sup_{t\in[0,T]}|A_t^m-A_t^n|^2]=0,\label{eqnx11}
\end{align}
where we set  $A_t^n=n\int_0^t(Y_s-y_s^n)ds -K_t^n$.
\end{lemma}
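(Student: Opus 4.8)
The plan is to handle the two assertions in order, leaning heavily on the convergence $y^n \to Y$ in $S_G^\alpha(0,T)$ already proved in Lemma \ref{the3.8} and on the uniform a priori bounds of Lemma \ref{the3.6}. Throughout write $\hat{y}=y^n-y^m$, $\hat{z}=z^n-z^m$, $\hat{A}=A^n-A^m$ and $\Delta g_s=g(s,y^n_s,z^n_s)-g(s,y^m_s,z^m_s)$, noting $\hat{y}_T=0$ and $\hat{A}_0=0$. Rewriting \eqref{eq5} in the forward form $y_t^n=Y_T-\int_t^T z_s^n dB_s+\int_t^T g(s,y_s^n,z_s^n)\,ds+(A_T^n-A_t^n)$ and subtracting the analogous equation for $m$ gives
\begin{displaymath}
\hat{y}_t=-\int_t^T \hat{z}_s\,dB_s+\int_t^T \Delta g_s\,ds+(\hat{A}_T-\hat{A}_t).
\end{displaymath}
Assertion \eqref{eqnx10} is then immediate: since $\sup_t|\hat{y}_t|\leq \sup_t|Y_t-y_t^n|+\sup_t|Y_t-y_t^m|$, the elementary inequality $(a+b)^\alpha\leq 2^{\alpha-1}(a^\alpha+b^\alpha)$ and Lemma \ref{the3.8} yield $\hat{\mathbb{E}}[\sup_{t\in[0,T]}|\hat{y}_t|^\alpha]\to 0$ as $m,n\to\infty$ for every $1<\alpha<\beta$.

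For the convergence of $\hat{z}$, I would apply $G$-It\^{o}'s formula to $|\hat{y}_t|^2$. Using $\hat{y}_T=0$ this produces the pathwise identity
\begin{displaymath}
|\hat{y}_0|^2+\int_0^T |\hat{z}_s|^2\,d\langle B\rangle_s=-2\int_0^T \hat{y}_s\hat{z}_s\,dB_s+2\int_0^T \hat{y}_s\Delta g_s\,ds+2\int_0^T \hat{y}_s\,d\hat{A}_s.
\end{displaymath}
By (H2), $|\Delta g_s|\leq L(|\hat{y}_s|+|\hat{z}_s|)$, so Young's inequality $2L|\hat{y}||\hat{z}|\leq \tfrac{\underline{\sigma}^2}{2}|\hat{z}|^2+\tfrac{2L^2}{\underline{\sigma}^2}|\hat{y}|^2$ together with the non-degeneracy $d\langle B\rangle_s\geq \underline{\sigma}^2\,ds$ lets me absorb half of $\int_0^T|\hat{z}_s|^2\,d\langle B\rangle_s$ into the left side, leaving, q.s.,
\begin{displaymath}
\frac{\underline{\sigma}^2}{2}\int_0^T |\hat{z}_s|^2\,ds\leq -2\int_0^T \hat{y}_s\hat{z}_s\,dB_s+C\int_0^T |\hat{y}_s|^2\,ds+2\int_0^T \hat{y}_s\,d\hat{A}_s.
\end{displaymath}
Taking $\hat{\mathbb{E}}$ and using sub-additivity, the stochastic integral term drops out because $\hat{y}\hat{z}\in H_G^1(0,T)$ (by H\"{o}lder with the bounds of Lemma \ref{the3.6}), so $\int_0^T\hat{y}_s\hat{z}_s\,dB_s$ is a symmetric $G$-martingale with $\hat{\mathbb{E}}[\pm\int\hat{y}\hat{z}\,dB]=0$; the term $C\,\hat{\mathbb{E}}[\int_0^T|\hat{y}_s|^2\,ds]\to0$ by \eqref{eqnx10} (choosing $2<\alpha<\beta$, possible as $\beta>2$). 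The remaining term is controlled by noting that $\hat{A}$ has bounded total variation $L_T^n+|K_T^n|+L_T^m+|K_T^m|$, so $|\int_0^T\hat{y}_s\,d\hat{A}_s|\leq \sup_s|\hat{y}_s|\cdot\mathrm{TV}(\hat{A})$, and by H\"{o}lder $\hat{\mathbb{E}}[\sup_s|\hat{y}_s|\cdot\mathrm{TV}(\hat{A})]\leq (\hat{\mathbb{E}}[\sup_s|\hat{y}_s|^2])^{1/2}(\hat{\mathbb{E}}[\mathrm{TV}(\hat{A})^2])^{1/2}$, where the first factor tends to $0$ and the second stays bounded by Lemma \ref{the3.6} (using $\beta>2$). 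This gives $\hat{\mathbb{E}}[\int_0^T|\hat{z}_s|^2\,ds]\to0$.

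For the convergence of $\hat{A}$, I would solve the difference equation for the finite-variation part, obtaining $\hat{A}_t=\hat{y}_0-\hat{y}_t+\int_0^t \hat{z}_s\,dB_s-\int_0^t \Delta g_s\,ds$, whence
\begin{displaymath}
\sup_{t\in[0,T]}|\hat{A}_t|\leq 2\sup_{t\in[0,T]}|\hat{y}_t|+\sup_{t\in[0,T]}\Big|\int_0^t \hat{z}_s\,dB_s\Big|+\int_0^T |\Delta g_s|\,ds.
\end{displaymath}
Raising to the second power and taking $\hat{\mathbb{E}}$, the first term vanishes by \eqref{eqnx10}, the stochastic-integral term is bounded by $C\,\hat{\mathbb{E}}[\int_0^T|\hat{z}_s|^2\,ds]$ via the Burkholder--Davis--Gundy estimate of Proposition \ref{the1.3} and hence tends to $0$ by the step above, and the last term is bounded through $|\Delta g_s|\leq L(|\hat{y}_s|+|\hat{z}_s|)$ and Cauchy--Schwarz by $C(\hat{\mathbb{E}}[\int_0^T|\hat{y}_s|^2\,ds]+\hat{\mathbb{E}}[\int_0^T|\hat{z}_s|^2\,ds])\to0$. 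This establishes \eqref{eqnx11}. The main obstacle is the cross term $\int_0^T\hat{y}_s\,d\hat{A}_s$: unlike the classical Doob--Meyer argument one cannot treat $\hat{A}$ as a martingale part and cannot take $G$-expectations term by term freely because $\hat{\mathbb{E}}$ is only sub-additive; the resolution is precisely to exploit the bounded-variation structure $\hat{A}=(L^n-K^n)-(L^m-K^m)$ together with the uniform $S_G^\alpha$-convergence of $\hat{y}$ from Lemma \ref{the3.8}, which is what makes the whole estimate close.
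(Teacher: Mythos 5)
Your proof is correct and follows essentially the same route as the paper: It\^{o}'s formula applied to $|\hat{y}_t|^2$, absorption of the $\hat{z}$ terms via the Lipschitz condition and the nondegeneracy $\underline{\sigma}>0$, the symmetric $G$-martingale property to remove the stochastic integral in expectation, H\"{o}lder's inequality combined with the uniform bounds of Lemma \ref{the3.6} and the convergence of Lemma \ref{the3.8} for the cross term, and finally the Burkholder--Davis--Gundy estimate of Proposition \ref{the1.3} for the $A$-part. The only cosmetic difference is that the paper splits the finite-variation cross term into $\int \hat{y}_s\,d\hat{K}_s$ and $\int \hat{y}_s\,d\hat{L}_s$, bounding the latter through $(m+n)\int_0^T(Y_s-y_s^n)(Y_s-y_s^m)\,ds$ before supping out, whereas you bound $\int \hat{y}_s\,d\hat{A}_s$ directly by $\sup_t|\hat{y}_t|$ times the total variation of $\hat{A}$ --- both reduce to the same H\"{o}lder estimate, which closes because $\beta>2$.
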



\begin{proof}
By Lemma \ref{the3.8},  it is easy to check (\ref{eqnx10}).  Set $\hat{y}_t=y_t^n-y_t^m,\ \hat{z}_t=z_t^n-z_t^m,\ \hat{K}_t=K_t^n-K_t^m,\ \hat{L}_t=L_t^n-L_t^m$ and $\hat{g}_t=g(t,y_t^n,z_t^n)-g(t,y_t^m,z_t^m)$. Applying It\^{o}'s formula to $|\hat{y}_t|^2$, we get
\begin{equation}\label{e1}
\begin{split}
&\quad|\hat{y}_t|^2+\int_t^T|\hat{z}_s|^2 d\langle B\rangle_s\\
&=\int_t^T2\hat{y}_s\hat{g}_sds-\int_t^T 2\hat{y}_sd\hat{K}_s
+\int_t^T2\hat{y}_sd\hat{L}_s-\int_t^T 2\hat{y}_s\hat{z}_s dB_s\\
&\leq 2L\int_t^T|\hat{y}_s|^2+|\hat{y}_s||\hat{z}_s|ds-\int_t^T 2\hat{y}_sd\hat{K}_s
+\int_t^T2\hat{y}_sd\hat{L}_s-\int_t^T 2\hat{y}_s\hat{z}_s dB_s.
\end{split}
\end{equation}
Note that for each $\varepsilon>0$,
\[2L\int_t^T |\hat{y}_s||\hat{z}_s|ds\leq L^2/\varepsilon\int_t^T|\hat{y}_s|^2ds+\varepsilon\int_t^T|\hat{z}_s|^2ds .\]
By simple calculation, we have
\begin{align*}
\int_t^T\hat{y}_sd\hat{L}_s&=\int_t^T (y_s^n-y_s^m)[n(Y_s-y_s^n)-m(Y_s-y_s^m)]ds\\
&\leq \int_t^T (m+n)(Y_s-y_s^m)(Y_s-y_s^n)ds.
\end{align*}
Choosing $\varepsilon<\underline{\sigma}^2$ and taking expectations on both sides of \eqref{e1}, we get
\begin{displaymath}
\begin{split}
\hat{\mathbb{E}}[\int_0^T |\hat{z}_s|^2ds]&\leq C\hat{\mathbb{E}}[\int_0^T (m+n)(Y_s-y_s^n)(Y_s-y_s^m)ds
+\int_0^T|\hat{y}_s|^2ds-\int_0^T \hat{y}_sd\hat{K}_s]\\
&\leq C\hat{\mathbb{E}}[\sup_{s\in[0,T]}|Y_s-y_s^n||L_T^m|+\sup_{s\in[0,T]}|Y_s-y_s^m||L_T^n|\\
&\quad+\int_0^T|\hat{y}_s|^2ds+\sup_{t\in[0,T]}|\hat{y}_s|(|K_T^n|+|K_T^m|)].
\end{split}
\end{displaymath}
By Lemma \ref{the3.6} and Lemma \ref{the3.8}, we obtain the first convergence of (\ref{eqnx11}). For the second one, we observe that, for each $n$,   the process $A^n_t$ is  nondecreasing in $t$, and
\begin{displaymath}
A_t^m-A_t^n=(y_0^m-y_0^n)-(y_t^m-y_t^n)+\int_0^t(z_s^m-z_s^n)dB_s-\int_0^t (g(s,y_s^m,z_s^m)-g(s,y_s^n,z^n_s))ds.
\end{displaymath}
It follows from the generalized Burkholder-Davis-Gundy inequality in Proposition \ref{the1.3} and H\"{o}lder's inequality that
\begin{displaymath}
\hat{\mathbb{E}}[\sup_{t\in[0,T]}|A_t^n-A_t^m|^2]\leq C(\hat{\mathbb{E}}[\sup_{t\in[0,T]}|y_t^n-y_t^m|^2]+\hat{\mathbb{E}}[\int_0^T(z_s^n-z_s^m)^2 ds])\rightarrow 0.
\end{displaymath}
\end{proof}

We are now in the final position to prove Theorem \ref{the3.3}:

\begin{proof}[Proof of Theorem \ref{the3.3}]
From (\ref{eqnx10}) and (\ref{eqnx11}),  the sequences of $\{y^n\}_{n=1}^\infty $ converges to $Y\in S^\alpha_G(0,T)$, $\{z^n\}$ converges to a process $Z\in M_G^2(0,T)$ and  $\{A^n\}$ converges to a nondecreasing process $A\in S^2_G(0,T)$.  Thus we obtain the decomposition \eqref{eq4} by letting  $n\rightarrow\infty$ in \eqref{eq5}.

To prove  the uniqueness, let  $Z, Z' \in M_G^2(0,T)$ and $A, A'\in S^2_G(0,T)$ be such that
\begin{displaymath}
Y_t=Y_0-\int_0^t g(s,Y_s,Z_s)ds+\int_0^t Z_sdB_s- A_t=Y_0-\int_0^tg(s,Y_s,Z'_s)ds+\int_0^t Z'_sdB_s- A'_t,
\end{displaymath}
where $A,A'$ are nondecreasing processes with $A_0=A'_0=0$. By applying It\^{o}'s formula to $(Y_t-Y_t)^2(\equiv0)$ on $[0,T]$ and taking expectation, we get
\begin{displaymath}
\hat{\mathbb{E}}[\int_0^T (Z_s-Z'_s)^2d\langle B\rangle_s]=0.
\end{displaymath}
Therefore $Z_t\equiv Z'_t$. From this it follows that $A_t\equiv A'_t$.
\end{proof}

\begin{remark}
If $g=0$, then the  $\mathbb{\hat{E}}^{g}$-supermartingale $\{Y_t\}_{t\in[0,T]}$ is a $G$-supermartingale. Theorem \ref{the3.3} also holds for this special case which is in fact the Doob-Meyer decomposition theorem for $G$-supermartinales. The penalized $G$-BSDEs is of the following form, $n=1,2,\cdots$,
\begin{displaymath}
y_t^n=Y_T+n\int_t^T(Y_s-y_s^n)ds-\int_t^T Z_s^ndB_s-(K_T^n-K_t^n).
\end{displaymath}

We can show Lemma \ref{the3.5} in a simple way. Since the above $G$-BSDE is linear, we can solve it explicitly by applying It\^{o}'s formula to $e^{-nt}y_t^n$,
\begin{displaymath}
e^{-nt}y_t^n+\int_t^T e^{-ns}Z_s^n dB_s+\int_t^T e^{-ns}dK_s^n=e^{-nT}+\int_t^T ne^{-ns}Y_s^nds.
\end{displaymath}
According to Lemma 3.4 in \cite{HJPS}, $\{\int_0^t e^{-ns}dK_s^n\}_{t\in[0,T]}$ is a $G$-martingle. Thus we get
\begin{displaymath}
\begin{split}
y_t^n&=e^{nt}\hat{\mathbb{E}}_t[e^{-nT}Y_T+\int_t^T ne^{-ns}Y_s ds]\\
&\leq e^{n(t-T)}\hat{\mathbb{E}}_t[Y_T]+\int_t^T ne^{n(t-s)}\hat{\mathbb{E}}_t[Y_s]ds\\
&\leq e^{n(t-T)}Y_t+\int_t^T ne^{n(t-s)}Y_t ds=Y_t.\\
\end{split}
\end{displaymath}
Furthermore, if $g$ is a linear function, the proof is similar.
\end{remark}

\begin{remark}
By Theorem 4.5 in \cite{S11} (see also Theorem 5.1 in \cite{STZ11}), for a $G$-martingale $X_t=\mathbb{\hat{E}}_t[\xi]$, $t\in[0,T]$, where $\xi\in L_G^\beta(\Omega_T)$ with $\beta>1$, we have
\[
X_t=X_0+\int_0^t Z_sdB_s+K_t,
\]
here $\{K_t\}$ is a decreasing $G$-martingale. Similar to the classical case, given a $G$-supermartingale $Y$, one may conjecture that
\begin{equation}\label{eq11}
Y_t=Y_0+\int_0^t Z_s dB_s +K_t-L_t, 
\end{equation}
where $\{K_t\}$ is a decreasing $G$-martingale and $\{L_t\}$ is a nondecreasing process with $L_0=0$.  The problem is that the above representation is not unique unless $K\equiv0$: $\widetilde{K}\equiv0$, $\widetilde{L}=L-K$ is a different decomposition. That is why we put the increasing process $\{L_t-K_t\}$ as an integral.

It is worth pointing out that unlike with the classical case, considering the decomposition theorem for $G$-submartingales is fundamentally different from that for $G$-supermartingales. Indeed, if Y is a $G$-submartingale, $\{L_t\}$ in \eqref{eq11} should be a nonincreasing process. Therefore $\{L_t-K_t\}$ ends up with a finite variation process. Then this situation becomes much more complicated. We would like to refer the reader to \cite{PZ} which defines a new norm for $G$-submartingales. As a byproduct, the decomposition is unique.
\end{remark}



Then we establish the decomposition theorem for $\mathbb{\hat{E}}^{g,f}$-supermartingales.
\begin{theorem}\label{the3.9}
Let $Y=(Y_t)_{t\in[0,T]}\in S_G^\beta(0,T)$ be an $\mathbb{\hat{E}}^{g,f}$-supermartingale under with $\beta>2$. Suppose that $f$ and $g$ satisfy (H1') and (H2). Then $(Y_t)$ has the following decomposition
\begin{equation}
Y_t=Y_0-\int_0^t g(s,Y_s,Z_s) ds-\int_0^t f(s,Y_s,Z_s)d\langle B\rangle_s+\int_0^t Z_s dB_s-A_t,\quad \textrm{q.s.},
\end{equation}
where $\{Z_t\}\in M_G^2(0,T)$ and $\{A_t\}$ is a continuous nondecreasing process with $A_0=0$ and $A_T\in L_G^2(\Omega_T)$. Furthermore, the above decomposition is unique.
\end{theorem}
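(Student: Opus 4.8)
The plan is to re-run the entire argument behind Theorem \ref{the3.3}, since the only new feature is the generator term $\int_t^T f(s,Y_s,Z_s)\,d\langle B\rangle_s$. The observation that makes this reduction routine is that, under each $P\in\mathcal{P}_M$ from Remark \ref{rem1.1}, one has $d\langle B\rangle_s=h_s^2\,ds$ with $h_s\in[\underline{\sigma},\bar{\sigma}]$, so the $d\langle B\rangle$ integral is absolutely continuous with respect to $ds$ with bounded density. Thus, measure by measure, the $f$ contribution collapses into an ordinary drift. First I would set up the penalized $G$-BSDEs
\[
y_t^n=Y_T+\int_t^T g(s,y_s^n,z_s^n)\,ds+\int_t^T f(s,y_s^n,z_s^n)\,d\langle B\rangle_s+n\int_t^T(Y_s-y_s^n)\,ds-\int_t^T z_s^n\,dB_s-(K_T^n-K_t^n),
\]
which by Theorem \ref{the2.7} have unique solutions $(y^n,z^n,K^n)\in\mathfrak{S}_G^\alpha(0,T)$, and which by the comparison Theorem \ref{the2.8} satisfy $y^n\leq y^{n+1}$.

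The key step, and the one I expect to be the main obstacle, is the analogue of Lemma \ref{the3.5}, namely $Y_t\geq y_t^n$ q.s. Fixing $P\in\mathcal{P}_M$ and passing to the associated classical BSDE driven by $B$ under $P$, the $d\langle B\rangle_s$ integral becomes $\int f(s,y_s,z_s)h_s^2\,ds$, so the effective generator is $g_M(s,y,z):=g(s,y,z)+f(s,y,z)h_s^2$. Since $h_s^2\leq\bar{\sigma}^2$, condition (H2) gives that $g_M$ is Lipschitz in $(y,z)$ with constant $(1+\bar{\sigma}^2)L$, so Peng's classical penalization result (Lemma \ref{the2.11}) applies in the form of Remark \ref{the2.12}. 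Exactly as in the proof of Lemma \ref{the3.5}, the representation Lemma \ref{the3.4} together with the $\mathbb{\hat{E}}^{g,f}$-supermartingale property shows that $(Y_t)$ is a $g_M$-supermartingale in the strong sense under each $P$, whence $Y_t\geq y_t^n$ $P$-a.s., and therefore q.s. The delicate point is again that Doob's optional sampling is unavailable under $\hat{\mathbb{E}}$, so this inequality must be obtained pathwise under every $P\in\mathcal{P}_M$ rather than globally.

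With $Y_t\geq y_t^n$ in hand, the remaining lemmas transfer essentially verbatim. The a priori bounds of Lemma \ref{the3.6} follow from the $G$-BSDE estimates of \cite{HJPS}, which already accommodate the $f$ term, and the uniform continuity Lemma \ref{the3.7} concerns $Y$ alone and is untouched. For the uniform convergence (Lemma \ref{the3.8}) I would again apply $G$-It\^{o}'s formula to $e^{-nt}y_t^n$; the extra term $\int_t^T e^{n(t-s)}f(s,y_s^n,z_s^n)\,d\langle B\rangle_s$ is controlled by $\bar{\sigma}^2\int_t^T e^{n(t-s)}|f(s,y_s^n,z_s^n)|\,ds$ and, by H\"{o}lder's inequality, decays like $1/\sqrt{n}$ exactly as the $g$ term does. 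In the Cauchy estimate (Lemma \ref{lem-the3.3}), applying It\^{o} to $|\hat{y}_t|^2$ produces an additional term $\int_t^T 2\hat{y}_s\hat{f}_s\,d\langle B\rangle_s$ with $\hat{f}_s=f(s,y_s^n,z_s^n)-f(s,y_s^m,z_s^m)$; by (H2) and $d\langle B\rangle_s\leq\bar{\sigma}^2\,ds$ this is dominated by $C\int_t^T(|\hat{y}_s|^2+|\hat{y}_s||\hat{z}_s|)\,ds$ and absorbed by the same $\varepsilon$-Young argument after choosing $\varepsilon<\underline{\sigma}^2$. Passing to the limit in the penalized equations then yields $Z\in M_G^2(0,T)$ and a continuous nondecreasing $A$ realizing the stated decomposition, and uniqueness follows, as in Theorem \ref{the3.3}, by applying $G$-It\^{o}'s formula to the identically-zero process $(Y_t-Y_t)^2$ to conclude $Z\equiv Z'$ and hence $A\equiv A'$.
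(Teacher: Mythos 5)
Your proposal is correct and coincides with the argument the paper intends: Theorem \ref{the3.9} is stated without a separate proof precisely because it is obtained by rerunning the proof of Theorem \ref{the3.3} with the $d\langle B\rangle$ term carried along, which is exactly what you do. You have correctly located every place the new term enters --- the measure-by-measure reduction of $d\langle B\rangle_s$ to a drift with bounded density in $[\underline{\sigma}^2,\bar{\sigma}^2]$ under each $P\in\mathcal{P}_M$ (so that the representation of Lemma \ref{the3.4} and the classical penalization of Lemma \ref{the2.11}/Remark \ref{the2.12} still apply, giving the analogue of Lemma \ref{the3.5}), the $1/\sqrt{n}$ decay of the extra generator term in the analogue of Lemma \ref{the3.8}, and the absorption of the extra cross term via Young's inequality against $\int_t^T|\hat{z}_s|^2\,d\langle B\rangle_s\geq \underline{\sigma}^2\int_t^T|\hat{z}_s|^2\,ds$ in the Cauchy estimates of Lemma \ref{lem-the3.3} --- so there is no gap.
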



\section{$\mathbb{\hat{E}}^g$-supermartingales and related PDEs}
In this section, we present the relationship between the $\mathbb{\hat{E}}^g$-supermartingales and the fully nonlinear parabolic PDEs. For this purpose, we will put the $\mathbb{\hat{E}}^g$-supermartingales in a Markovian framework.

We will make the following assumptions throughout this section. Let $b,\ h,\ \sigma:[0,T]\times\mathbb{R}\rightarrow\mathbb{R}$
 and $g:[0,T]\times\mathbb{R}^3\rightarrow\mathbb{R}$
 be deterministic functions and satisfy the following conditions:
\begin{description}
\item[(H4.1)] $b,\ h,\ \sigma, \ g$ are continuous in $t$;
\item[(H4.2)] There exists a constant $L>0$, such that
\begin{align*}
&|b(t,x)-b(t,x')|+|h(t,x)-h(t,x')|+|\sigma(t,x)-\sigma(t,x')|\leq L|x-x'|,\\
&|g(t,x,y,z)-g(t,x',y',z')|\leq L(|x-x'|+|y-y'|+|z-z'|).
\end{align*}
\end{description}

For each $t\in[0,T]$ and $\xi\in L_G^2(\Omega_t)$, we consider the following type of SDE driven by $1$-dimensional $G$-Brownian motion:
\begin{equation}\label{eq1.10}
dX_{s}^{t,\xi}=b(s,X_{s}^{t,\xi})ds+h(s,X_{s}^{t,\xi})d\langle B\rangle_s+\sigma(s,X_{s}^{t,\xi})dB_s, \quad X_t^{t,\xi}=\xi.
\end{equation}
 We have the following estimates which can be found in Chapter V in \cite{P10}.
\begin{proposition}\label{the1.12}
Let $\xi,\xi'\in L_G^p(\Omega_t,\mathbb{R})$ with $p\geq2$. Then we have, for each $\delta\in[0,T-t]$
\begin{align*}
\hat{\mathbb{E}}_t[|X_{t+\delta}^{t,\xi}-X_{t+\delta}^{t,\xi'}|^p]&\leq C|\xi-\xi'|^p,\\
\hat{\mathbb{E}}_t[\sup_{s\in[t,t+\delta]}|X_s^{t,\xi}-\xi|^p]&\leq C(1+|\xi|^p)\delta^{p/2},\\
\hat{\mathbb{E}}_t[\sup_{s\in[t,T]}|X_s^{t,\xi}|^p]&\leq C(1+|\xi|^p),
\end{align*}
where the constant $C$ depends on $L,\ p$, $T$ and the function $G$. Consequently, for each $(x,y,z)\in \mathbb{R}^3$ and $p\geq 2$, we have $\{X^{t,x}_s\}_{s\in[t,T]},\ \{g(s,X_s^{t,x},y,z)\}_{s\in[t,T]}\in M_G^{p}(0,T)$.
\end{proposition}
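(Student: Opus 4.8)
The plan is to establish the three estimates by the standard a priori techniques for $G$-SDEs, relying on $G$-It\^{o}'s formula, the Burkholder-Davis-Gundy inequality of Proposition \ref{the1.3}, and a Gronwall argument, with the non-degeneracy bounds $\underline{\sigma}^2 dt \leq d\langle B\rangle_t \leq \bar{\sigma}^2 dt$ used throughout to convert $d\langle B\rangle$-integrals into $dt$-integrals. Logically the third inequality implies the second, so I would prove them in the order 3, 2, 1 and then deduce the consequence.

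I would begin with the global sup estimate (the third inequality). Fix $\xi$ and write $X_s := X_s^{t,\xi}$. Applying $G$-It\^{o}'s formula to $|X_s|^p$ (which is $C^2$ for $p\geq 2$), the $dB_s$-term is a symmetric $G$-martingale and vanishes under $\hat{\mathbb{E}}_t$, while the $ds$- and $d\langle B\rangle_s$-terms are controlled using the linear growth $|b(s,x)|+|h(s,x)|+|\sigma(s,x)|\leq C(1+|x|)$, which follows from (H4.2) together with the continuity (hence boundedness on $[0,T]$) of $b(\cdot,0),h(\cdot,0),\sigma(\cdot,0)$. To bring the supremum inside I would apply Proposition \ref{the1.3} to the martingale term $\int_t^s p|X_r|^{p-1}\mathrm{sgn}(X_r)\sigma(r,X_r)dB_r$, absorb the resulting $\sup$-term by Young's inequality, and close the loop with Gronwall's inequality, obtaining $\hat{\mathbb{E}}_t[\sup_{s\in[t,T]}|X_s|^p]\leq C(1+|\xi|^p)$.

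The short-time increment estimate then comes directly. Writing $X_s-\xi=\int_t^s b\,dr+\int_t^s h\,d\langle B\rangle_r+\int_t^s \sigma\,dB_r$, I would split $\sup_{s\in[t,t+\delta]}|X_s-\xi|^p$ into the three corresponding pieces. The two finite-variation pieces contribute at most $C\delta^p(1+\sup_{[t,T]}|X_r|)^p$, and the stochastic integral, via Proposition \ref{the1.3}, contributes at most $C\hat{\mathbb{E}}_t[(\int_t^{t+\delta}|\sigma(r,X_r)|^2 dr)^{p/2}]\leq C\delta^{p/2}(1+\sup_{[t,T]}|X_r|^2)^{p/2}$; since $\delta^p\leq T^{p/2}\delta^{p/2}$, both are $O(\delta^{p/2})$, and taking $\hat{\mathbb{E}}_t$ and inserting the global bound just obtained produces the factor $(1+|\xi|^p)$. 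The difference estimate is handled independently: setting $\hat{X}_s=X_s^{t,\xi}-X_s^{t,\xi'}$, I would apply $G$-It\^{o} to $|\hat{X}_s|^p$, use the Lipschitz bound (H4.2) on the increments of the coefficients so that the $ds$- and $d\langle B\rangle$-terms are dominated by $C\int_t^s|\hat{X}_r|^p dr$, discard the $G$-martingale part under $\hat{\mathbb{E}}_t$, and conclude by Gronwall that $\hat{\mathbb{E}}_t[|\hat{X}_{t+\delta}|^p]\leq C|\xi-\xi'|^p$.

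Finally, for the consequence, the third estimate with $\xi=x$ constant gives $\hat{\mathbb{E}}[\sup_{s\in[t,T]}|X_s^{t,x}|^p]<\infty$; since $X^{t,x}$ is constructed (Chapter V of \cite{P10}) as an $S_G^p$-limit of its Picard iterates, it lies in $M_G^p(0,T)$, and then $\{g(s,X_s^{t,x},y,z)\}$ lies in $M_G^p(0,T)$ because $|g(s,X_s,y,z)|\leq |g(s,0,y,z)|+L|X_s|$ with $g(\cdot,0,y,z)$ bounded, together with a routine approximation in which $X$ is replaced by step processes in $M_G^0(0,T)$ and the continuous time-dependence of $g$ is discretized. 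I expect the main technical obstacle to be the global sup estimate: correctly extracting the supremum through the $G$-conditional expectation requires the $G$-version of Burkholder-Davis-Gundy in Proposition \ref{the1.3} and careful Young/Gronwall bookkeeping, since the classical Doob maximal inequality is not available verbatim in the sublinear setting.
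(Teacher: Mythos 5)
You should first note that the paper contains no proof of Proposition \ref{the1.12} at all: it is quoted from Chapter V of \cite{P10}. The argument in that reference works directly on the integral equation --- raise it to the power $p$, apply the $C_p$-inequality and H\"older to the $dr$- and $d\langle B\rangle_r$-integrals (only the upper bound $d\langle B\rangle_r\leq\bar{\sigma}^2dr$ is needed), apply the BDG-type inequality of Proposition \ref{the1.3} to the $dB_r$-integral, and close with Gronwall --- with the a priori finiteness required by Gronwall supplied by the fact that, for $\xi\in L_G^p(\Omega_t)$, the Picard/contraction construction of the solution already takes place in $M_G^p$ (indeed in $S_G^p$). Your handling of the second estimate and of the final consequence follows exactly this pattern and is correct.

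The genuine gap is in your first and third estimates, which you route through $G$-It\^o's formula applied to $|x|^p$. To discard the term $\int_t^s p|X_r|^{p-1}\mathrm{sgn}(X_r)\sigma(r,X_r)\,dB_r$ under $\hat{\mathbb{E}}_t$ you need it to be a symmetric $G$-martingale, which requires its integrand to lie in $M_G^2(0,T)$ (or at least $H_G^1(0,T)$); by the linear growth of $\sigma$ this integrand is of order $(1+|X_r|)^p$, so what you need is control of moments of order $2p$, or of $\hat{\mathbb{E}}_t[\sup_r|X_r|^p]$ itself --- precisely the quantity you are trying to bound. The same circularity occurs in the difference estimate, where the integrand is of order $|\hat{X}_r|^p$ while $\xi,\xi'$ are only assumed to lie in $L_G^p$. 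Classically one resolves this by localizing at $\tau_n=\inf\{s:|X_s|\geq n\}$ and passing to the limit by Fatou; but stopping-time localization and Fatou's lemma are exactly the tools that the sublinear framework lacks or severely restricts --- the introduction of this very paper stresses that optional sampling is open and that Fatou's lemma ``cannot be directly and automatically used'' under $G$-expectation. So, as written, your It\^o-formula steps do not close. The repair is to prove the first and third estimates the same way you proved the second (the direct method above), which never places a power $|X_r|^{p-1}$ inside a stochastic integral and only consumes the moments guaranteed by the construction of the solution.
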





Consider the following type of PDE:
\begin{equation}\label{eq1.12}
\partial_t u+F(D_x^2u,D_xu,u,x,t)=0,
\end{equation}
where
 \begin{align*}
F(D_x^2 u,D_x u,u,x,t)=&G(H(D_x^2 u,D_x u,u,x,t))+b(t,x)D_x u
+g(t,x,u,\sigma(t,x)D_x u),\\
H(D_x^2 u,D_x u,u,x,t)=&\sigma^2(t,x)D_x^2u+2 h(t,x) D_xu.
\end{align*}

Now we shall recall the definition of viscosity solution to equation \eqref{eq1.12}, which is introduced in \cite{CIL}. Let $u\in C((0,T)\times\mathbb{R})$ and $(t,x)\in(0,T)\times\mathbb{R}$. Denote by $\mathcal{P}^{2,-}u(t,x)$ (the ``parabolic subjet" of $u$ at $(t,x)$) the set of triples $(a,p,X)\in\mathbb{R}^3$ such that
\[
u(s,y)   \geq u(t,x)+a(s-t)+ p(y-x)+\frac{1}{2} X(y-x)^2+o(|s-t|+|y-x|^{2}).
\]
Similarly, we define $\mathcal{P}^{2,+}u(t,x)$ (the ``parabolic superjet" of $u$ at $(t,x)$) by $\mathcal{P}^{2,+}u(t,x):=-\mathcal{P}^{2,-}(-u)(t,x)$.

\begin{definition}
$u\in C((0,T)\times \mathbb{R})$ is called a viscosity supersolution (resp. subsolution) of \eqref{eq1.12} on $(0,T)\times \mathbb{R}$ if at any point $(t,x)\in(0,T)\times \mathbb{R}$, for any $(a,p,X)\in \mathcal{P}^{2,-}u(t,x)$ (resp. $\in \mathcal{P}^{2,+}u(t,x)$)
\[
a+F(X,p,u,x,t)\leq 0 \ (resp. \geq 0).
\]
$u\in C((0,T)\times \mathbb{R})$ is said to be a viscosity solution of \eqref{eq1.12} if it is both a viscosity supersolution and a viscosity subsolution.
\end{definition}


\begin{remark}
We then give the following equivalent definition (see \cite{CIL}). $u\in C((0,T)\times\mathbb{R})$ is called a viscosity supersolution (resp. subsolution) of \eqref{eq1.12} on $(0,T)\times\mathbb{R}$ if for each fixed  $(t,x)\in(0,T)\times\mathbb{R}$, $v\in C^{1,2}((0,T)\times\mathbb{R})$ such that $u(t,x)=v(t,x)$ and $v\leq u$ (resp. $v\geq u$) on $(0,T)\times\mathbb{R}$, we have
\[\partial_t v(t,x)+F(D_x^2v(t,x),D_xv(t,x),v(t,x),x,t)\leq 0 \ (resp. \geq 0).\]
\end{remark}

We state the main result of this section.
\begin{theorem}\label{the1.13}
Assume (H4.1) and (H4.2) hold. Let $u:[0,T]\times\mathbb{R}\rightarrow\mathbb{R}$ be uniformly continuous with respect to $(t,x)$ and satisfy
\[|u(t,x)|\leq C(1+|x|^k),\ \ t\in[0,T], x\in\mathbb{R},\] where $k$ is a positive integer. Then $u$ is  a  viscosity supersolution of equation \eqref{eq1.12},  if and only if  $\{Y^{t,x}_s\}_{s\in[t,T]}:=\{u(s,X_s^{t,x})\}_{s\in[t,T]}$ is an $\mathbb{\hat{E}}^{g^{t,x}}$-supermartingale, for each fixed $(t,x)\in(0,T)\times\mathbb{R}$, where $g^{t,x}=g(s,X_s^{t,x},y,z)$ and $\{X^{t,x}_s\}_{s\in[t,T]}$ is given by \eqref{eq1.10}.
\end{theorem}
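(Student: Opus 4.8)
The plan is to establish the two implications separately, in both cases exploiting the Markovian nonlinear Feynman--Kac link between the forward $G$-SDE \eqref{eq1.10}, the $G$-BSDE defining $\hat{\mathbb{E}}^{g^{t,x}}$, and the PDE \eqref{eq1.12}. The precise statement I would use is the following: if $\bar u$ denotes the viscosity solution of \eqref{eq1.12} on a subinterval $[s,r]\subset[t,T]$ with terminal datum $\bar u(r,\cdot)=\psi$, then by the Markov flow property $X_r^{t,x}=X_r^{s,X_s^{t,x}}$ of \eqref{eq1.10} together with the time-consistency of $\hat{\mathbb{E}}^{g}$ and the existence theory for $G$-BSDEs (\cite{HJPS}), one has $\hat{\mathbb{E}}^{g^{t,x}}_{s,r}[\psi(X_r^{t,x})]=\bar u(s,X_s^{t,x})$, q.s. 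This identity converts the PDE side and the probabilistic side into each other.

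For necessity, assume $u$ is a viscosity supersolution of \eqref{eq1.12}. Fix $t\le s\le r\le T$ and let $\bar u$ solve \eqref{eq1.12} on $[s,r]$ with $\bar u(r,\cdot)=u(r,\cdot)$, so that $\hat{\mathbb{E}}^{g^{t,x}}_{s,r}[u(r,X_r^{t,x})]=\bar u(s,X_s^{t,x})$ by the representation above. Since $u$ is a supersolution and $\bar u$ a subsolution of the same equation on $[s,r]$ sharing the terminal value $u(r,\cdot)$, the comparison principle for the fully nonlinear parabolic equation \eqref{eq1.12} (valid under (H4.1)--(H4.2) and the polynomial growth of $u$, cf. \cite{CIL}) yields $u\ge\bar u$ on $[s,r]$. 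Evaluating at $(s,X_s^{t,x})$ gives $\hat{\mathbb{E}}^{g^{t,x}}_{s,r}[u(r,X_r^{t,x})]=\bar u(s,X_s^{t,x})\le u(s,X_s^{t,x})$, q.s., which is exactly the $\hat{\mathbb{E}}^{g^{t,x}}$-supermartingale property of $Y^{t,x}_s=u(s,X_s^{t,x})$.

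For sufficiency, suppose $Y^{t,x}=u(\cdot,X^{t,x})$ is an $\hat{\mathbb{E}}^{g^{t,x}}$-supermartingale and fix $(t_0,x_0)$ and a test function $\phi\in C^{1,2}$ with $\phi\le u$ and $\phi(t_0,x_0)=u(t_0,x_0)$. Taking $(t,x)=(t_0,x_0)$ (so $X^{t_0,x_0}_{t_0}=x_0$), monotonicity of $\hat{\mathbb{E}}^{g^{t_0,x_0}}$ and $\phi\le u$ give, for small $\delta>0$,
\[
\hat{\mathbb{E}}^{g^{t_0,x_0}}_{t_0,t_0+\delta}[\phi(t_0+\delta,X_{t_0+\delta}^{t_0,x_0})]\le\hat{\mathbb{E}}^{g^{t_0,x_0}}_{t_0,t_0+\delta}[u(t_0+\delta,X_{t_0+\delta}^{t_0,x_0})]\le u(t_0,x_0)=\phi(t_0,x_0).
\]
Applying the $G$-It\^{o} formula to $\phi(s,X^{t_0,x_0}_s)$ and subtracting it from the $G$-BSDE solved by $\eta_s:=\hat{\mathbb{E}}^{g^{t_0,x_0}}_{s,t_0+\delta}[\phi(t_0+\delta,X_{t_0+\delta})]$, I would establish the local expansion
\[
\hat{\mathbb{E}}^{g^{t_0,x_0}}_{t_0,t_0+\delta}[\phi(t_0+\delta,X_{t_0+\delta}^{t_0,x_0})]-\phi(t_0,x_0)=\delta\big(\partial_t\phi+F(D_x^2\phi,D_x\phi,\phi,x_0,t_0)\big)+o(\delta),
\]
where the quadratic-variation contribution is identified via $\hat{\mathbb{E}}_{t_0}[a(\langle B\rangle_{t_0+\delta}-\langle B\rangle_{t_0})]=2G(a)\delta$, so that the term $\tfrac12\sigma^2D_x^2\phi+hD_x\phi$ produces $2G(\tfrac12\sigma^2D_x^2\phi+hD_x\phi)\delta=G(H)\delta$ by positive homogeneity of $G$. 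Dividing by $\delta$ and letting $\delta\to0$ forces $\partial_t\phi+F\le0$ at $(t_0,x_0)$, i.e. $u$ is a viscosity supersolution.

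I expect the main obstacle to be making this local expansion rigorous inside the sublinear framework. Because $\hat{\mathbb{E}}$ is only subadditive, the $dr$, $d\langle B\rangle$, $dB$ and $dK$ contributions cannot be split under the expectation, so I must control the coupling of $\eta_s-\phi(s,X_s)$ with its martingale part $\tilde Z_s:=Z_s-\sigma D_x\phi$ and with the decreasing $G$-martingale $K$ on $[t_0,t_0+\delta]$. This calls for the a priori $G$-BSDE estimates (Proposition \ref{the1.3} and the bounds behind \cite{HJPS}) together with the flow estimates of Proposition \ref{the1.12}, in order to show that replacing $(\eta_s,Z_s)$ by $(\phi(t_0,x_0),\sigma D_x\phi(t_0,x_0))$ in the generator and freezing the coefficients of the $d\langle B\rangle$ term costs only $o(\delta)$. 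By contrast, the necessity direction is comparatively routine once the comparison principle for \eqref{eq1.12} is granted.
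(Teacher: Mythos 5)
Your proposal is correct, and it splits cleanly into a half that coincides with the paper and a half that genuinely differs. Your ``necessity'' direction (supersolution $\Rightarrow$ supermartingale) is exactly the paper's ``only if'' argument \eqref{eq1.15}: solve \eqref{eq1.12} on $[s,r]$ with terminal datum $u(r,\cdot)$, identify that solution with $\hat{\mathbb{E}}^{g^{t,x}}_{s,r}[u(r,X_r^{t,x})]$ via the nonlinear Feynman--Kac formula of \cite{HJPS1} and the flow property, and conclude by the comparison principle. Your ``sufficiency'' direction (supermartingale $\Rightarrow$ supersolution), however, takes a different route: the paper never performs an infinitesimal expansion, but instead runs the penalization scheme of Section 3 --- it identifies the penalized $G$-BSDE solutions $y^{n,t,x}_s=v^n(s,X^{t,x}_s)$ with viscosity solutions $v^n$ of the penalized PDEs, uses Lemma \ref{the3.5} and Lemma \ref{lem-the3.3} to get $v^n\le u$ and $v^n\uparrow u$, observes that $v^n\le u$ forces each $v^n$ to be a supersolution of the unpenalized equation \eqref{eq1.12} (the penalty $n(u-v^n)$ is nonnegative), upgrades monotone convergence to locally uniform convergence via the uniform continuity of $u$, and finishes with the stability theorem for viscosity solutions (Proposition 4.3 in \cite{CIL}). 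You instead argue pointwise with a global test function $\phi\le u$ touching $u$ at $(t_0,x_0)$, combine the supermartingale inequality with the $G$-BSDE comparison theorem, and extract $\partial_t\phi+F\le 0$ from a small-time expansion of $\hat{\mathbb{E}}^{g^{t_0,x_0}}_{t_0,t_0+\delta}[\phi(t_0+\delta,X^{t_0,x_0}_{t_0+\delta})]$; your identification of the quadratic-variation contribution, $\hat{\mathbb{E}}_{t_0}[a(\langle B\rangle_{t_0+\delta}-\langle B\rangle_{t_0})]=2G(a)\delta$ together with $2G(\tfrac12 H)=G(H)$, is exactly right, and the expansion can indeed be made rigorous by freezing coefficients at $(t_0,x_0)$, comparing with the $G$-BSDE with constant generators (whose solution is affine in time), and controlling errors through the flow estimates of Proposition \ref{the1.12} and the a priori $G$-BSDE estimates. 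The trade-off is clear: the paper's route reuses the machinery already built for Theorem \ref{the3.3} plus a soft PDE stability result, so no stochastic Taylor expansion is ever needed; your route is more classical and self-contained at the level of the viscosity definition, needs the supermartingale property only infinitesimally at each point, and bypasses both penalization and the stability theorem --- but at the price of re-proving the analytic core of the Feynman--Kac theorem of \cite{HJPS1} (coefficient freezing under a sublinear expectation, and the coupling with the decreasing $G$-martingale $K$, which cannot be split off by subadditivity), work you correctly flag but leave as a sketch.
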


To prove this theorem, we introduce the following lemma.
\begin{lemma}\label{the1.14}
We have, for each $p>2$ and $(t,x)\in[0,T)\times\mathbb{R}$, $\{Y_s^{t,x}\}_{s\in[t,T]}\in S_G^p(0,T)$.
\end{lemma}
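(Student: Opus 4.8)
The plan is to verify the two requirements for membership in $S_G^p(0,T)$ separately: first that $\hat{\mathbb{E}}[\sup_{s\in[t,T]}|Y_s^{t,x}|^p]<\infty$, and second that $Y^{t,x}$ lies in the closure of $S_G^0(0,T)$ under $\|\cdot\|_{S_G^p}$. The finiteness is immediate: since $u$ has polynomial growth, $\sup_{s}|Y_s^{t,x}|^p\le C(1+\sup_s|X_s^{t,x}|^{kp})$, and the third estimate of Proposition \ref{the1.12} (applied with exponent $kp\ge 2$) gives $\hat{\mathbb{E}}[\sup_{s\in[t,T]}|X_s^{t,x}|^{kp}]\le C(1+|x|^{kp})<\infty$. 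Moreover $s\mapsto u(s,X_s^{t,x})$ is continuous q.s., because $X^{t,x}$ has continuous paths and $u$ is continuous.

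The membership in the completion is the real content, and I would reduce it to the single claim that the $G$-SDE solution itself satisfies $X^{t,x}\in S_G^p(0,T)$. The reduction rests on two observations. First, a Lipschitz transformation preserves $S_G^p(0,T)$: if $X\in S_G^p(0,T)$ is the limit of $X^{(j)}\in S_G^0(0,T)$ and $\phi$ is Lipschitz, then each $\phi(\cdot,X^{(j)}_\cdot)$ is again of the form $\tilde h_j(\cdot,B_{t_1\wedge\cdot},\dots)$ with $\tilde h_j\in C_{b,Lip}$ (the outer map $\phi$ stays bounded on the bounded range of $h_j$), so $\phi(\cdot,X^{(j)})\in S_G^0(0,T)$, and $\|\phi(\cdot,X^{(j)})-\phi(\cdot,X)\|_{S_G^p}\le L\|X^{(j)}-X\|_{S_G^p}\to0$, whence $\phi(\cdot,X)\in S_G^p(0,T)$. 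Second, because $u$ is uniformly continuous in $(t,x)$, its inf-convolutions $u_m(t,x)=\inf_{(t',x')}\{u(t',x')+m(|t-t'|+|x-x'|)\}$ are jointly Lipschitz (for $m$ larger than the growth constant) and converge to $u$ uniformly on $[0,T]\times\mathbb{R}$. Granting $X^{t,x}\in S_G^p(0,T)$, the first observation gives $u_m(\cdot,X^{t,x}_\cdot)\in S_G^p(0,T)$, and $\|u_m(\cdot,X^{t,x})-u(\cdot,X^{t,x})\|_{S_G^p}\le\|u_m-u\|_\infty\to0$ shows $Y^{t,x}=u(\cdot,X^{t,x})$ is an $S_G^p$-limit of elements of $S_G^p(0,T)$, hence lies in this complete space.

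It remains to prove $X^{t,x}\in S_G^p(0,T)$, which I expect to be the main obstacle, since Proposition \ref{the1.12} only records $M_G^p$-membership. I would obtain it from the Picard scheme used to construct the $G$-SDE solution: starting from the constant $X^{0}\equiv x\in S_G^0(0,T)$, each iterate is built from $X^{m}$ by applying the drift integral $\int_t^\cdot b(r,X^m_r)\,dr$, the $d\langle B\rangle$-integral $\int_t^\cdot h(r,X^m_r)\,d\langle B\rangle_r$, and the $G$-It\^o integral $\int_t^\cdot \sigma(r,X^m_r)\,dB_r$. The first two map $M_G^p$-integrands into $S_G^p(0,T)$ by a direct sup-estimate (using $d\langle B\rangle\le\bar\sigma^2\,ds$ together with $\langle B\rangle\in S_G^p(0,T)$), while the decisive point is that the $G$-It\^o integral maps $H_G^p$-integrands into $S_G^p(0,T)$: this is exactly where the generalized Burkholder-Davis-Gundy inequality of Proposition \ref{the1.3} is used to bound $\hat{\mathbb{E}}[\sup_s|\int_t^s\sigma(r,X^m_r)\,dB_r|^p]$ by $\hat{\mathbb{E}}[(\int_t^T|\sigma(r,X^m_r)|^2\,dr)^{p/2}]$. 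Thus each iterate lies in $S_G^p(0,T)$, and the same BDG estimate upgrades the standard $M_G^p$-contraction of the Picard iteration to a contraction in $\|\cdot\|_{S_G^p}$, so $(X^m)$ is Cauchy in $S_G^p$ and its limit $X^{t,x}$ belongs to $S_G^p(0,T)$. The care needed is purely in verifying that each elementary integral operation lands in the closure of $S_G^0(0,T)$, and not merely that its sup-norm is finite; this is the step I would write out most carefully.
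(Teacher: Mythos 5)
Your proposal is correct, and it follows the same skeleton as the paper's proof --- the moment bound $\sup_s|Y^{t,x}_s|^p\le C\sup_s(1+|X^{t,x}_s|^{kp})$ combined with Proposition \ref{the1.12}, and then uniform continuity of $u$ to conclude --- but the paper's entire argument for the second (and, as you rightly say, the real) part is the single sentence ``Since $u$ is uniformly continuous, we get the desired result.'' What you have done differently is to actually supply that argument: the reduction via Lipschitz inf-convolutions $u_m\to u$ uniformly, the observation that composition with a jointly Lipschitz map sends $S_G^p(0,T)$ into itself (because it sends $S_G^0(0,T)$ into itself and is $\|\cdot\|_{S_G^p}$-continuous), and the claim $X^{t,x}\in S_G^p(0,T)$, proved by running the Picard iteration for the $G$-SDE in the $\|\cdot\|_{S_G^p}$ norm with Proposition \ref{the1.3} providing the contraction estimate for the stochastic-integral term. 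None of this is in the paper, and all of it is needed; your version is a genuine completion of the published proof rather than a departure from it. One point to make explicit when you write it up: the stated growth of $u$ is $C(1+|x|^k)$ with $k$ possibly larger than $1$, and an inf-convolution with a \emph{linear} penalty $m(|t-t'|+|x-x'|)$ of a function decaying like $-|x|^k$ would be identically $-\infty$. The claim is rescued by noting that uniform continuity of $u$ on $[0,T]\times\mathbb{R}$ by itself forces at most linear growth, $|u(t,x)|\le C'(1+|x|)$, so the inf-convolution is finite (and $m$-Lipschitz) once $m$ exceeds that linear constant; this one-line remark should precede your definition of $u_m$. With that inserted, the argument is complete.
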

\begin{proof}
Note that
\begin{align*}
\sup_{s\in[t,T]}|Y_s^{t,x}|^{p}\leq C\sup_{s\in[t,T]}(1+|X_s^{t,x}|^{kp}).
\end{align*}
By Proposition \ref{the1.12}, we have $\mathbb{\hat{E}}[\sup_{s\in[t,T]}|Y_s^{t,x}|^{p}]<\infty$. Since $u$ is uniformly continuous, we get the desired result.
\end{proof}


\medskip

\begin{proof}[Proof of Theorem \ref{the1.13}]
For a given function  $u$ satisfying the conditions in Theorem \ref{the1.13} and for each $n=1,2,\cdots$, $(t,x)\in [0,T)\times{\mathbb R}$,
 let us  consider the following  $G$-BSDEs:
\begin{displaymath}\label{eq1.13}
y_s^{n,t,x}=Y_T^{t,x}+\int_s^T g(r,X^{t,x}_r,y_r^{n,t,x},z_r^{n,t,x})dr+n\int_s^T(Y_r^{t,x}-y_r^{n,t,x})dr-\int_s^T z_r^{n,t,x}dB_r-(K_T^{n,t,x}-K_s^{n,t,x}),
\end{displaymath}
and, correspondingly,  the following viscosity solution of PDEs:
\begin{displaymath}\label{eq1.14}
\partial_t v^n(t,x)+F(D_x^2 v^n(t,x),D_x v^n(t,x),v^n(t,x),x,t)+n(u(t,x)-v^n(t,x))=0,
\end{displaymath}
defined on $(0,T)\times {\mathbb{R}}$ with the Cauchy condition
\begin{displaymath}\label{eq1.14a}
v^n(T,x)=u(T,x).
\end{displaymath}
From  the nonlinear Feynman-Kac formula obtained in \cite{HJPS1} (i.e., Theorem 4.5 in \cite{HJPS1}), it follows that  $y_s^{n,t,x}=v^n(s,X_s^{t,x})$, $s\in [t,T]$.

To prove the ``if" part of the Theorem, we assume that, for each $(t,x)$, $\{Y_{\cdot}^{t,x}\}$ is an $\hat{\mathbb{E}}^{g^{t,x}}$-supermartingale on $[t,T]$.
 Observing that $(y^{n,t,x}_s,z^{n,t,x}_s,K^{n,t,x}_s)_{s\in [t,T]}$ is a special case of (\ref{eq5}),  we can apply Lemma \ref{the3.5} and  Lemma \ref{lem-the3.3} to prove that
$y^{n,t,x}_s\leq Y^{t,x}_s$ and then to get the convergence of $\{y^{n,t,x}_\cdot\}$ to $\{Y^{t,x}_\cdot\}$ on $[t,T]$, similar to (\ref{eqnx10}).
 By the proof of Theorem \ref{the3.3}, for any $(t,x)\in[0,T]\times\mathbb{R}$, we have
\[
v^n(s,X^{t,x}_s)=y^{n,t,x}_s\leq Y^{t,x}_s=u(s,X^{t,x}_s),
\]
and $v^{n}\uparrow u$. Since $u$ is uniformly continuous on $[0,T]\times {\mathbb{R}}$, the convergence is also locally uniform. By Theorem 4.5 in \cite{HJPS1} and noting that $v^n\leq u$, $v^n$ is a viscosity supersolution of PDE \eqref{eq1.12}. It follows from the stability theorem of the viscosity solutions (see Proposition 4.3 in \cite{CIL}) that the limit function $u$ is also a viscosity supersolution of  PDE \eqref{eq1.12}.

 Now we prove the ``only if" part of the Theorem. 
 For each $t_1\in(0,T)$, let $v^{t_1,u(t_1,\cdot)}$ be the viscosity solution of PDE \eqref{eq1.12} on $(0,t_1)\times\mathbb{R}$ with Cauchy condition $v^{t_1,u(t_1,\cdot)}(t_1,x)=u(t_1,x)$. By the comparison theorem for viscosity solutions, for each $(s,x)\in[0,t_1]\times\mathbb{R}$, it is easy to check that $v^{t_1,u(t_1,\cdot)}(s,x)\leq u(s,x)$. For any $t\leq s\leq r\leq T$, by the nonlinear Feymann-Kac formula in \cite{HJPS1}, we have
\begin{equation}\label{eq1.15}
\hat{\mathbb{E}}^{g^{t,x}}_{s,r}[Y_r^{t,x}]=\hat{\mathbb{E}}^{g^{t,x}}_{s,r}[u(r,X_r^{t,x})]= \hat{\mathbb{E}}^{g^{t,x}}_{s,r}[v^{r,u(r,\cdot)}(r,X_r^{t,x})]=v^{r,u(r,\cdot)}(s,X_s^{t,x})\leq u(s,X_s^{t,x})= Y_s^{t,x},
\end{equation}
which implies that $\{Y^{t,x}_s\}_{s\in[t,T]}:=\{u(s,X_s^{t,x})\}_{s\in[t,T]}$ is an $\mathbb{\hat{E}}^{g^{t,x}}$-supermartingale. The proof is complete.
\end{proof}
\medskip

 The following result can be considered as the ``inverse" comparison theorem for viscosity solutions of PDEs.
\begin{corollary} Let $V:[0,T]\times\mathbb{R}\rightarrow \mathbb{R}$ be uniformly continuous with respect to $(t,x)$ and  satisfy
\[|V(t,x)|\leq C(1+|x|^k),\ \ t\in[0,T], x\in\mathbb{R},\] where $k$ is a positive integer. Assume that
\[
V(t,x)\geq u^{t_1,V(t_1,\cdot)}(t,x),\,\,\,\,\, \forall (t,x)\in [0,t_1]\times {\mathbb{R}}, \,\,\, t_1\in [0,T],
\]
where $u^{t_1,V(t_1,\cdot)}$ denotes the viscosity solution of PDE (\ref{eq1.12}) on $(0,t_1)\times \mathbb{R}$ with Cauchy condition $u^{t_1,V(t_1,\cdot)}(t_1,x)=V(t_1,x)$. Then $V$ is a viscosity supersolution of  PDE (\ref{eq1.12}) on  $(0,T)\times {\mathbb{R}}$.
\end{corollary}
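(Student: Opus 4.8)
The plan is to recognize this corollary as precisely the ``only if'' direction of Theorem \ref{the1.13} run in reverse, with the PDE comparison theorem replaced by the standing hypothesis on $V$. First I would observe that $V$ satisfies exactly the regularity and polynomial growth conditions imposed on $u$ in Theorem \ref{the1.13}. Hence, fixing $(t,x)\in(0,T)\times\mathbb{R}$ and setting $Y_s^{t,x}:=V(s,X_s^{t,x})$ for $s\in[t,T]$, the proof of Lemma \ref{the1.14} applies verbatim and gives $\{Y_s^{t,x}\}_{s\in[t,T]}\in S_G^p(0,T)$ for every $p>2$. This is the integrability required to speak of the $\hat{\mathbb{E}}^{g^{t,x}}$-supermartingale property in the sense used in Theorem \ref{the1.13}.

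The core step is to verify that $\{Y_s^{t,x}\}_{s\in[t,T]}$ is an $\hat{\mathbb{E}}^{g^{t,x}}$-supermartingale, by reproducing the chain of relations \eqref{eq1.15}. Fix $t\leq s\leq r\leq T$. Since the Cauchy data of $u^{r,V(r,\cdot)}$ at time $r$ is $V(r,\cdot)$, the nonlinear Feynman-Kac formula of \cite{HJPS1}, together with the flow property $X_r^{s,X_s^{t,x}}=X_r^{t,x}$ which is already implicit in \eqref{eq1.15}, yields
\[
\hat{\mathbb{E}}^{g^{t,x}}_{s,r}[Y_r^{t,x}]=\hat{\mathbb{E}}^{g^{t,x}}_{s,r}[V(r,X_r^{t,x})]=u^{r,V(r,\cdot)}(s,X_s^{t,x}).
\]
Applying the standing hypothesis with $t_1=r$ at the (random) point $(s,X_s^{t,x})$, which is legitimate since $s\leq r$ and $X_s^{t,x}$ takes values in $\mathbb{R}$, gives $u^{r,V(r,\cdot)}(s,X_s^{t,x})\leq V(s,X_s^{t,x})=Y_s^{t,x}$, q.s. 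Combining the two displays shows $\hat{\mathbb{E}}^{g^{t,x}}_{s,r}[Y_r^{t,x}]\leq Y_s^{t,x}$, which is exactly the supermartingale inequality. Note that the role played by the viscosity comparison theorem in \eqref{eq1.15} is here played directly by the assumption $V\geq u^{t_1,V(t_1,\cdot)}$.

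Finally I would invoke the ``if'' direction of Theorem \ref{the1.13}: having shown that $\{u(s,X_s^{t,x})\}$ with $u=V$ is an $\hat{\mathbb{E}}^{g^{t,x}}$-supermartingale for each fixed $(t,x)$, the theorem concludes that $V$ is a viscosity supersolution of \eqref{eq1.12} on $(0,T)\times\mathbb{R}$. I do not expect a genuine obstacle, since this is essentially a transcription of the argument already carried out for Theorem \ref{the1.13}; the only points demanding minor care are that the hypothesis must be applied along the random trajectory $X_s^{t,x}$ rather than at a deterministic argument, and that the Feynman-Kac identity is used with the correct terminal time $r$ and terminal value $V(r,\cdot)$.
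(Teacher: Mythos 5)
Your proposal is correct and follows essentially the same route as the paper: the paper's proof likewise sets $Y_s^{t,x}:=V(s,X_s^{t,x})$, verifies the $\hat{\mathbb{E}}^{g^{t,x}}$-supermartingale property by repeating the chain \eqref{eq1.15} with the viscosity comparison step replaced by the standing hypothesis $V\geq u^{t_1,V(t_1,\cdot)}$, and then applies the ``if'' part of Theorem \ref{the1.13}. Your write-up merely makes explicit (via the Feynman--Kac identity, the flow property, and Lemma \ref{the1.14}) what the paper compresses into ``Similar with \eqref{eq1.15}''.
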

\begin{proof}  For each fixed $(t,x)$, set $\{Y^{t,x}_s\}_{s\in[t,T]}:=\{V(s,X_s^{t,x})\}_{s\in[t,T]}$. 
Similar with \eqref{eq1.15}, $\{Y^{t,x}_s\}_{s\in[t,T]}$ is an $\mathbb{\hat{E}}^{g^{t,x}}$-supermartingale. It follows from Theorem \ref{the1.13} that $V$ is a viscosity supersolution of PDE  (\ref{eq1.12}).
\end{proof}

\textbf{Conclusion}
We obtain the decomposition theorem of Doob-Meyer's type for  $\mathbb{\hat{E}}^{g}$-supermartingales, which is a generalization of the results of Peng \cite{P99}. Our theorem provides the first step for solving the representation theorem of dynamically consistent nonlinear expectations. Different from the classical case, the decomposition theorem for $\mathbb{\hat{E}}^g$-submartingales remains open.





\end{document}